\documentclass[a4paper,12pt]{amsart}
\usepackage[T1]{fontenc}
\usepackage[utf8]{inputenc}
\usepackage[margin=2.5cm]{geometry}

\usepackage{amsmath, amssymb,amsfonts}
\usepackage{array}
\usepackage{multirow}
\usepackage[dvipsnames]{xcolor}
\usepackage{float}
\usepackage{tikz}
\usetikzlibrary{shapes.misc}
\usetikzlibrary{shapes.geometric}
\usetikzlibrary {decorations.pathmorphing}

\tikzset{unseulcoin fill/.style={append after command={
   \pgfextra
        \draw[sharp corners, fill=#1, color=#1, line width = 0mm]% 
    (\tikzlastnode.west)% 
    [rounded corners=0pt] |- (\tikzlastnode.north)% 
    [rounded corners=0pt] -| (\tikzlastnode.east)% 
    [rounded corners=0pt] |- (\tikzlastnode.south)% 
    [rounded corners=5pt] -| (\tikzlastnode.west);
   \endpgfextra}}}

\tikzset{black node/.style = {draw=black,fill=black,circle, inner sep=0, minimum size=0.15cm}}
\usepackage{multicol}

\usepackage{hyperref}
\hypersetup{
  pdftitle = {Degeneracy and list coloring},
  pdfauthor = {L. Esperet},
  colorlinks = true,
  linkcolor = black!30!red,
  citecolor = black!30!green
}
  \usepackage[capitalise, compress, nameinlink, noabbrev]{cleveref}
\crefname{rule}{}{}
\creflabelformat{rule}{#2(R#1)#3}

\usepackage{paralist}

\newtheorem{theorem}{Theorem}[section]
\newtheorem{corollary}[theorem]{Corollary}
\newtheorem{definition}[theorem]{Definition}

\newtheorem{proposition}[theorem]{Proposition}
\newtheorem{lemma}[theorem]{Lemma}

\newtheorem{observation}[theorem]{Observation}

\newtheorem{claim}[theorem]{Claim}

\def\cqedsymbol{\ifmmode$\lrcorner$\else{\unskip\nobreak\hfil
\penalty50\hskip1em\null\nobreak\hfil$\lrcorner$
\parfillskip=0pt\finalhyphendemerits=0\endgraf}\fi}

\newcommand{\mad}{\operatorname{mad}}
\newcommand{\LLY}{\mathrm{LLY}}
\newcommand{\Lip}{\operatorname{Lip}}

\newcommand{\E}{\mathrm{Ext}_x}

\let\le\leqslant
\let\ge\geqslant
\let\leq\leqslant
\let\geq\geqslant

\title[Positive Lin--Lu--Yau curvature, planar graphs, and forbidden minors]{Positive Lin--Lu--Yau curvature,\\ planar graphs, and forbidden minors}

\author[L.~Esperet]{Louis Esperet}
\address[L.~Esperet]{Univ.\ Grenoble Alpes, CNRS, Laboratoire G-SCOP,
  Grenoble, France}
\email{louis.esperet@grenoble-inp.fr}

\author[J.~Semaan]{Julie Semaan}
\address[J.~Semaan]{Univ.\ Grenoble Alpes, Institut Fourier, France}
\email{julie.semaan@univ-grenoble-alpes.fr}

\thanks{The authors were partially supported by the French ANR Projects
  ENEDISC (ANR-24-CE48-7768) and Mimétique (ANR-25-CE48-4089), and by LabEx
  PERSYVAL-lab (ANR-11-LABX-0025).}

\date{\today}

\begin{document}

\begin{abstract}
Over the past few decades, several notions of graph curvature have been
introduced. Among them, Ollivier
curvature, defined through optimal transport, and its Lin--Lu--Yau
variant provide notions of curvature for graphs which retain several
features of their counterpart in Riemannian geometry. A classical problem in this context is to understand the global structure of positively curved spaces. 

Lu and Wang proved that there are only finitely many positively curved planar
graphs of minimum degree at least \(3\). 
In this paper, we study two directions motivated by this result.
We first show that for fixed \(2\leq s\leq t\), there are only finitely many positively curved
\(K_{s,t}\)-minor-free graphs with minimum degree at least \(s\). In particular, this generalizes the result of Lu and Wang to graphs embeddable on any fixed surface. We then give a complete classification of  positively curved \(5\)-connected plane triangulations. In particular, we show that there are only five examples, on
\(12,14,15,16\), and \(17\) vertices. 
\end{abstract}

\maketitle

\section{Introduction}

Curvature, in its various forms, may be viewed as a measure of deviation
from flatness. In Riemannian geometry, Ricci curvature measures, roughly
speaking, the average infinitesimal spreading of geodesics, the intrinsic
analogues of straight lines. Lower bounds on Ricci curvature have strong
global consequences; for instance, the Bonnet--Myers theorem asserts that
a uniform positive lower bound on the Ricci curvature yields an upper bound
on the diameter.

Although Ricci curvature is originally defined through the differential
structure of a manifold, Ollivier observed that it admits a remarkably
simple metric interpretation \cite{Ollivier2009}. On a positively Ricci curved manifold, small balls
centred at nearby points are, on average, closer to one another than their
centres are. By measuring this using optimal transport, and considering the
random walk given by the normalized volume measure on small balls, classical Ricci curvature is recovered, after rescaling, in
the second-order behaviour of the corresponding transportation distance.
This led Ollivier to introduce a notion of Ricci curvature depending
only on a metric and a random walk, which therefore extends naturally to
non-smooth spaces and, in particular, to graphs.

On an unweighted graph, a natural choice is the $p$-lazy simple random
walk: from a vertex, the walk stays there with probability $p$ and
otherwise moves to a uniformly chosen neighbour. The corresponding
Ollivier curvature thus gives a family of curvature notions depending on
the idleness parameter $p$. Lin, Lu and Yau \cite{LinLuYau2011}
introduced a parameter-free modification by considering the infinitesimal
behaviour of this family as $p$ tends to one. The resulting
Lin--Lu--Yau curvature retains several of the geometric features of Ricci
curvature; in particular, a uniform positive lower bound yields a
Bonnet--Myers type diameter bound.

Lin--Lu--Yau curvature is also closely related to the analytic structure of
the graph. Münch and Wojciechowski \cite{MunchWojciechowski2019} gave an
equivalent formulation in terms of the graph Laplacian, thereby connecting
curvature bounds with the associated heat flow. In particular, lower
curvature bounds can be characterized through the contraction of
Lipschitz functions under the heat semigroup. 

\medskip

The global structure of graphs with positive Lin--Lu--Yau curvature has been investigated extensively in the past few years, see for instance \cite{LiuLuWang2024,LuWang2026} for sparse graphs  and \cite{CLY26} for dense graphs, and the references therein.
The main motivation of our work comes from the following two recent results.

\begin{theorem}[\cite{LiuLuWang2024}]\label{thm:outerplanar}
  Every connected outerplanar graph of minimum
  degree at least $2$ and positive Lin--Lu--Yau curvature has order at most 10.
\end{theorem}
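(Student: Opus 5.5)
We plan to prove this by combining local curvature upper bounds with the structure of outerplanar graphs, along the lines of \cite{LiuLuWang2024}. The basic tool is the Münch--Wojciechowski characterization, which gives a simple upper bound on the Lin--Lu--Yau curvature of an edge in terms of local structure. If $xy$ is an edge, then $\kappa(x,y)$ is at most $\frac{2+|N(x)\cap N(y)|}{d(x)}$ minus a penalty term. That penalty is incurred when many neighbours of $x$ are at distance $2$ from $y$, and vice versa. The standard consequence is the following: if $\kappa(x,y)>0$, then a ``large'' part of $N(x)$ must be matched to $N(y)$ by vertices at distance at most $1$, that is, via common neighbours, edges between the two neighbourhoods, or $x$ and $y$ themselves. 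We will test $\kappa>0$ with explicit $1$-Lipschitz functions $f$ (for instance $f=\mathrm{dist}(\cdot,\{y\}\cup\ldots)$ truncated), because $\kappa(x,y)\le \nabla_{yx}\Delta f$.

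First we derive forbidden local configurations in a connected outerplanar graph $G$ with $\delta\ge2$ and positive curvature everywhere. We check, by computing transport plans or Lipschitz test functions, that the following fail:
\begin{itemize}
\item an edge $xy$ with $d(x)=d(y)=2$ and no common neighbour, which gives $\kappa\le 0$, so long induced paths of degree-$2$ vertices are excluded;
\item a degree-$2$ vertex adjacent to a vertex of large degree without a triangle;
\item a vertex $x$ of degree $d\ge 5$ at all. In an outerplanar graph, $N(x)$ induces a linear forest, so $x$ has a neighbour $y$ sharing at most two common neighbours, and the remaining $d-3$ neighbours of $x$ need partners near $y$. Outerplanarity (no $K_{2,3}$ minor) limits how many such edges can exist, which forces $\kappa(x,y)\le 0$ once $d$ is large.
\end{itemize}
Together these give $\Delta(G)\le 4$ (or a similarly small constant).

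Second, with bounded degree in hand, we bound the diameter. Positivity alone gives no uniform bound, but finiteness plus a bounded set of local types does. We then use the outerplanar structure. Take the weak dual tree of a $2$-connected block, or the block-cut tree in general. A leaf face of the weak dual is bounded by a path of degree-$2$ vertices together with a chord. The local constraints force each leaf face to be a triangle or a quadrilateral, and they also constrain the faces adjacent to it. We will show that a long path in the weak dual produces an edge, namely a chord shared by two inner faces, whose endpoints have their other neighbours on opposite sides. A Lipschitz function that is $0$ on one side and increases across the chord then gives $\kappa\le 0$. This makes the weak dual of bounded size. Cut vertices are treated similarly: a cut vertex $v$ separating two blocks gives an edge $vu$ for which the neighbours of $v$ in the other block are at distance $2$ from $u$, which forces few blocks and small ones.

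Finally, once the number of inner faces and the face lengths are bounded, finitely many candidate graphs remain. We would then do a finite (computer-assisted, if necessary) verification that all positively curved ones have at most $10$ vertices. The main obstacle is the second step, getting a sharp enough handle on long chains of faces. We expect the case analysis around chords between two quadrilaterals, or between a triangle and a quadrilateral, to be delicate. We would first try to prove that no inner (non-leaf) face of the weak dual has length at least $4$. That would reduce the problem to near-triangulated outerplanar graphs, where the counting is easy.
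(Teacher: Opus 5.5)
The paper does not prove this statement: it is quoted from \cite{LiuLuWang2024}, and the paper's own Theorem~\ref{thm:kstmain} with $(s,t)=(2,3)$ only recovers finiteness, with an enormous bound. Judged on its own terms, your plan has a genuine gap, and it begins with a false claim.

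\textbf{The degree bound is false.} You assert that a vertex of degree at least $5$ cannot occur, so that $\Delta(G)\le 4$. Let $F$ be the fan obtained by joining a vertex $x$ to every vertex of a path $p_1p_2\cdots p_9$. It is connected, outerplanar, of minimum degree $2$ and of order $10$. For the edge $xp_1$, normalise $f(x)=0$ and $f(p_1)=1$ in Theorem~\ref{thm:limit-free}, and put $b=f(p_2)\in[0,1]$. The smallest admissible values are $f(p_3)=b-1$ and $f(p_j)=-1$ for $j\ge 4$, and they give $\Delta f(x)-\Delta f(p_1)=\frac{6-5b}{18}\ge\frac1{18}$. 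The same computation gives $\frac19$ on $xp_2$ and $xp_8$, $\frac29$ on $xp_i$ for $3\le i\le 7$, and at least $\frac23$ on the path edges. So $F$ is positively curved with $\Delta(F)=9$, and it attains the bound $10$.

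Your heuristic, that the remaining $d-3$ neighbours of $x$ ``need partners near $y$'', overlooks two facts: $x\in N(y)$, and every neighbour of $x$ is within distance $2$ of every vertex of $N(y)$ through $x$. When $d(y)$ is small, Lemma~\ref{lem:curvature-expansion} only bounds $d(x)$ in terms of $d(y)$. For $d(y)=2$ it yields $d(x)\le 9$, which $F$ attains.

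Your first bullet is also false as stated. An edge $xy$ with $d(x)=d(y)=2$ and no common neighbour is positively curved as soon as the other neighbours $x',y'$ satisfy $d(x',y')\le 2$. For example, the curvature is $1$ in $C_4$ and $\frac12$ in $C_5$.

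\textbf{The bound $10$ is never derived.} Once $\Delta\le 9$ is known, finiteness is immediate. Contrary to your remark, positivity does give a uniform bound: $\kappa\ge 1/(\Delta(\Delta-1))$ and the Lin--Lu--Yau volume estimate applies, exactly as in the deduction of Theorem~\ref{thm:kstmain} from Theorem~\ref{thm:maximum-degree}. But the resulting bounds are astronomical. So ``finitely many candidates plus a computer check'' is not an argument until you have an explicit and small bound on the order.

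The step meant to supply that bound is only announced. It claims that long weak-dual paths and many blocks force an edge with $\kappa\le 0$, and that inner faces are triangles, but no test function is specified. It would also have to handle configurations like the fan above, where a single vertex of degree up to $9$ carries a long chain of triangles whose chords are all positively curved. That sharp structural analysis is the real content of the theorem, and it is missing from your plan.
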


\begin{theorem}[\cite{LuWang2026}]\label{thm:planar}
  Every connected planar graph of minimum
  degree at least $3$ and positive Lin--Lu--Yau curvature has maximum degree at
  most 17 and order at most $17^{544}$.
\end{theorem}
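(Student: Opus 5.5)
The plan is to split \cref{thm:planar} into a local degree bound and a global counting step. The key tool is the Münch--Wojciechowski formulation of Lin--Lu--Yau curvature. For an edge $xy$, the curvature is $\kappa(x,y)=\inf\{\nabla_{yx}\Delta f : f\in \Lip(1),\ \nabla_{yx}f=1\}$. Equivalently, by the transport formulation, $\kappa(x,y)$ is governed by an optimal matching between $N(x)$ and $N(y)$. This gives upper bounds on $\kappa(x,y)$ of the form
\[
\kappa(x,y)\le \frac{2+|\Delta_{xy}|}{d(x)}+\frac{2+|\Delta_{xy}|}{d(y)}-\text{(penalty for neighbours not matched at distance }\le 1).
\]
Here $\Delta_{xy}$ is the set of common neighbours, i.e.\ the triangles on $xy$. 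The first step is to record a clean consequence: if $d(x)$ is large and $y$ is a neighbour with few common neighbours with $x$ and few neighbours of $y$ adjacent to $N(x)$, then $\kappa(x,y)\le 0$. Concretely, I would test with a $1$-Lipschitz $f$ that is $0$ on $B(x,1)\setminus\{y\}$, $-1$ at $x$ (after shifting), and $1$ far from $x$ near $y$. This shows that positivity forces each high-degree vertex $x$ to have, for each neighbour $y$, a substantial fraction of $N(y)$ within distance one of $N(x)$.

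The second step bounds the maximum degree using planarity. Fix $x$ of maximum degree $d$ and embed the graph. The neighbours $y_1,\dots,y_d$ appear in cyclic order around $x$. Planarity limits the interactions: the edges between $N(x)$ and $N(y_i)$ must lie within the two faces around $xy_i$ and the neighbouring wedges. Hence each $y_i$ has at most about two common neighbours with $x$ that are "useful", and its other neighbours are separated from most of $N(x)$. Combined with $\delta\ge 3$, this means some third neighbour of $y_i$ lies outside $B(x,1)$, or else reaches $N(x)$ only via the two adjacent wedges. Plugging these counts into the curvature bound gives $\kappa(x,y_i)\le c_1/d(y_i)+c_2/d-\ldots$. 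Averaging over $i$ and using Euler's formula on the planar graph induced by $B(x,1)$ (at most $3|B|-6$ edges) should show that for $d$ above an explicit constant, some edge $xy_i$ has nonpositive curvature. Extracting the exact constant 17 requires careful case analysis on the degrees of the $y_i$ (low-degree $y_i$ give large $1/d(y_i)$ terms). \textbf{I expect this step to be the main obstacle}, and would first handle degree-3 and degree-4 neighbours separately by discharging.

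The third step bounds the order. With maximum degree $\Delta\le 17$, it suffices to bound the diameter, since then $|V|\le 1+\Delta\sum_{i<D}(\Delta-1)^i$. A Bonnet--Myers bound requires a uniform lower bound on curvature. Since degrees are bounded by 17, the curvature takes finitely many rational values with denominators dividing $\operatorname{lcm}(1,\dots,17)^2$, or similar. So positivity implies $\kappa\ge \varepsilon_0$ for an explicit $\varepsilon_0>0$, and the discrete Bonnet--Myers inequality $\operatorname{diam}\le 2/\varepsilon_0$ then yields a bound of the stated shape $17^{O(\cdot)}$. The exponent $544$ would come from this denominator analysis: $544 = 2\cdot 16\cdot 17$, consistent with $\varepsilon_0 \sim 1/(16\cdot 17)$ and a diameter of about $2\cdot 16\cdot 17$.
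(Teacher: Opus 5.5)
\textbf{Step 2 is the gap.} It carries the whole content of the bound $\Delta\le 17$, but it is a plan (``should show'', ``requires careful case analysis''), not a proof, and its structural premise is false. Planarity does not confine the edges between $N(y_i)\setminus\{x\}$ and $N(x)$ to the wedges next to $xy_i$. Take the double wheel: a cycle $y_0\cdots y_{d-1}$ plus two non-adjacent apices $x,z$ joined to every $y_j$. There the neighbour $z$ of every $y_i$ is adjacent to all of $N(x)$.

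Positive curvature actually pushes towards this picture. Lemma~\ref{lem:curvature-expansion} with $S=\{z\}$ shows that every neighbour $z\neq x$ of a neighbour $y$ of $x$ has more than $d(x)/d(y)-d(y)-1$ neighbours in $N(x)$. What planarity supplies is the exclusion of $K_{3,3}$. That stops several vertices from seeing large overlapping parts of $N(x)$, but it allows one vertex $z\notin N[x]$ to see all of it, so a proof must treat this configuration exactly. For instance, in the double wheel with $d\ge 5$, take $f(x)=f(z)=0$, $f(y_0)=f(y_1)=f(y_{d-1})=1$, $f(y_2)=f(y_{d-2})=0$, and $f=-1$ on the other $y_j$. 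This gives $\kappa_{\LLY}(x,y_0)\le 8/d-\tfrac12$, which is nonpositive once $d\ge 16$. Averaging plus $|E(G[N[x]])|\le 3|N[x]|-6$ cannot capture this, since the relevant vertex lies outside $N[x]$.

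\textbf{Step 1 also fails.} Your test function is oriented the wrong way. With $f(x)=-1$ and $f\equiv 0$ on $N(x)$ you get $\Delta f(x)=1$, the largest possible value. Since every other neighbour $w$ of $y$ has $f(w)\le 1$, you get $\Delta f(y)\le 1-2/d(y)$. So such an $f$ always gives $\Delta f(x)-\Delta f(y)\ge 2/d(y)>0$ and can never certify $\kappa_{\LLY}(x,y)\le 0$. You need $f$ low on $N(x)$ and high on $N(y)$, normalised by $f(x)=0$, $f(y)=1$, as in Lemma~\ref{lem:curvature-estimate}.

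\textbf{Step 3 has the right shape but an unjustified constant.} It matches the reduction the paper uses after a degree bound: a uniform lower bound on positive curvature, quoted there as $\kappa_{\LLY}\ge 1/(\Delta(\Delta-1))$, followed by the Lin--Lu--Yau diameter bound. Your exponent $544$, however, is reverse-engineered. Denominators dividing $\operatorname{lcm}(1,\ldots,17)^2$ only give $\varepsilon_0=1/\operatorname{lcm}(1,\ldots,17)^2$ and a diameter bound of order $10^{14}$.

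The missing fact is that the infimum in Theorem~\ref{thm:limit-free} is attained by an integer-valued $f$. The constraints $f(u)-f(v)\le d_G(u,v)$ on $N[x]\cup N[y]$, together with $f(x)=0$ and $f(y)=1$, form a totally unimodular system with integral right-hand side. Hence $\kappa_{\LLY}(x,y)\in\frac{1}{d(x)}\mathbb{Z}+\frac{1}{d(y)}\mathbb{Z}=\frac{1}{\operatorname{lcm}(d(x),d(y))}\mathbb{Z}$. Once degrees are at most $17$, positivity gives $\kappa_{\LLY}(x,y)\ge 1/(17\cdot 16)$. Therefore the diameter is at most $544$ and $|V(G)|\le 1+17\sum_{i<544}16^i\le 17^{544}$.

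Note that the paper does not reprove the constant $17$; it cites Lu and Wang for it. Its own machinery, Lemma~\ref{lem:curvature-expansion} combined with the $K_{3,t}$-minor constructions of Section~\ref{sec:kst}, yields only a non-explicit degree bound for planar graphs. So the explicit value $17$ needs an argument that is neither in your sketch nor in the paper.
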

The order in Theorem \ref{thm:outerplanar} is tight, and the
maximum-degree bound in Theorem \ref{thm:planar} is almost sharp:
Lu and Wang exhibit a graph attaining maximum degree
$16$ and conjecture that their upper bound of $17$
can be improved to $16$. On the other hand the order in Theorem \ref{thm:planar} is certainly very far from tight, the largest known
examples have order 26.

\medskip

To understand the minimum degree requirement, it is useful to
compute the Lin--Lu--Yau curvature of the complete bipartite graph
$K_{s,t}$, where $1\leq s\leq t$. By symmetry, the Lin--Lu--Yau curvature has the same value along every
edge, and a quick computation reveals that this value is equal to
$\frac{2}{t}>0$. It follows that that taking $s=2$ and arbitrarily large $t$
gives an infinite family of connected planar graphs of minimum degree $2$
and positive Lin--Lu--Yau curvature. Similarly, taking $s=1$ and arbitrarily large $t$
gives an infinite family of connected outerplanar graphs 
and positive Lin--Lu--Yau curvature and minimum degree $1$. This shows that the minimum degree
requirements in these results are necessary.

\medskip

This raises the following question: What graph property forces that there are only finitely many
connected graphs with positive Lin--Lu--Yau curvature? 
The results above show  that planar graphs with minimum degree at least 3 and  outerplanar graphs with minimum degree at least 2 have this property. Our first result is a far-reaching generalization of Theorems \ref{thm:planar} and \ref{thm:outerplanar}.

\begin{theorem}\label{thm:kstmain}
  For every integers $2\le s\le t$, there are only finitely many connected locally finite $K_{s,t}$-minor-free graph of minimum
  degree at least $s$ and positive Lin--Lu--Yau curvature.
\end{theorem}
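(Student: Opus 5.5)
The plan is to bound the maximum degree first, and then bound the diameter. Together these bound the order, since there are then only finitely many graphs of bounded degree and bounded diameter.

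\emph{Diameter.} Positive Lin--Lu--Yau curvature on a finite graph gives a positive lower bound $\kappa_0$ on the curvature of every edge. For locally finite graphs we need to be more careful, and we first argue that with bounded degree the curvature takes only finitely many values. Indeed, $\kappa(xy)$ depends only on the ball of radius $2$ around $\{x,y\}$, and there are finitely many such balls of bounded degree. Hence $\kappa\ge \kappa_0>0$ uniformly, and the discrete Bonnet--Myers theorem gives $\operatorname{diam}\le 2/\kappa_0$. This also shows the graph is finite.

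\emph{Degree.} The key local tool is the Münch--Wojciechowski characterization, or directly the transport formulation. For an edge $xy$ with $d_x=d(x)\ge d_y=d(y)$ the standard upper bound reads
\[
\kappa(xy)\le \frac{2+|N(x)\cap N(y)|}{d_x} \;-\; (\text{a correction that is }\ge 0\text{ when } N(x)\setminus N[y] \text{ is far from } N(y)).
\]
More concretely, we test with a $1$-Lipschitz function $f$ which is $0$ on $y$, $1$ on $x$ and on $N(x)\setminus N[y]$, and $0$ or $-1$ near $y$. This gives $\kappa(xy)\le \frac{1}{d_x}\bigl(\cdots\bigr)-\frac{1}{d_y}\bigl(\cdots\bigr)$, so positivity of $\kappa(xy)$ forces many neighbours of $x$ to be within distance $1$ of $N(y)$. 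In words, for a neighbour $y$ of a high-degree vertex $x$, most vertices of $N(x)$ must be matched (through an edge or a common vertex) to $N(y)\cup\{y\}$, and the transport plan sending mass from $N(x)$ to $N(y)$ uses only distances $\le 1$ on most of the mass.

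Now suppose $d(x)=\Delta$ is huge. Each $z\in N(x)$ has degree $\ge s$, so it has $\ge s-1$ neighbours other than $x$. Applying positivity on the edges $xz$ and the transport structure shows that the graph $G-x$ restricted to a neighbourhood of $N(x)$ is rich. Our aim is to find either
\begin{itemize}
\item a vertex set $S$ of size $s-1$, disjoint from $N(x)$, with $t$ vertices of $N(x)$ all adjacent to every vertex of $S$; together with $x$ this gives $K_{s,t}$ as a subgraph; or
\item many disjoint connected subgraphs, each adjacent to $x$ and to a common set of other branch sets, giving a $K_{s,t}$ minor.
\end{itemize}
The standard fact we would invoke is that $K_{s,t}$-minor-free graphs have bounded average degree, so $G[N(x)]$ and the bipartite graph between $N(x)$ and $V\setminus N[x]$ are sparse. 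Consequently most $z\in N(x)$ have bounded degree inside $N(x)$. Curvature positivity on $xz$ with $d(z)$ small compared to $d(x)$ then forces $d(z)$ to be, relative to $d(x)$, ``absorbed'': the bound gives $\kappa(xz)\le \frac{2+|N(x)\cap N(z)|}{d(x)}-\frac{d(z)-1-|N(x)\cap N(z)|-(\text{matched part})}{d(z)}$. This forces the neighbours of $z$ outside $N[x]$ to be adjacent to neighbours of $x$, so we get $2$-paths from $N(z)$ back into $N(x)$. Using that each $z$ has $\ge s-1$ neighbours besides $x$, and a contraction of these short paths into branch sets, we obtain by pigeonhole (via sparsity, which bounds the number of distinct small ``attachment sets'') $t$ vertices of $N(x)$ sharing the same $s-1$ branch sets. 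Together with $x$ these give a $K_{s,t}$ minor, a contradiction, so $\Delta$ is bounded in terms of $s,t$.

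The main obstacle is this degree step. The curvature inequality only says most of the mass of $N(x)$ is transported at distance $\le 1$, and we must convert this into $s-1$ \emph{disjoint} common branch sets; the Ramsey/pigeonhole bookkeeping to ensure disjointness of the contracted paths is where the care goes. I would first try to handle it by choosing the branch sets greedily among vertices of $V\setminus N[x]$ of high degree into $N(x)$, using Kővári--Sós--Turán style counting in the minor-free (hence degenerate) setting.
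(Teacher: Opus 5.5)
Your finiteness step is fine. Bounded degree gives finitely many radius-$2$ local types, hence a uniform $\kappa_0>0$, and then Bonnet--Myers applies; the paper uses the explicit bound $\kappa\ge 1/(\Delta(\Delta-1))$ instead. The gap is the degree bound, which is the entire content of the theorem. You describe the target, but the mechanism you sketch does not deliver it.

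Applying Lemma~\ref{lem:curvature-expansion} to an edge $xz$ with $S=\{w\}$, $w\in N(z)\setminus N[x]$, gives only $|N(w)\cap N(x)|>d(x)/d(z)-(1+|N(x)\cap N(z)|)$. So a single external neighbour is forced to see many vertices of $N(x)$ only when $d(z)$ is bounded. In that regime K\H{o}v\'ari--S\'os--Tur\'an counting confines all such $w$ to a bounded pool, and pigeonholing on $(s-1)$-element attachment sets works. But this is precisely Proposition~\ref{prop:low-degree}, whose conclusion is that $x$ has \emph{boundedly many} neighbours of degree at most $K$, not that $d(x)$ is bounded.

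For the remaining neighbours $u$, which are almost all of $N(x)$, $d(u)$ is large and the single-vertex inequality is vacuous. Sparsity does not bound the number of distinct attachment sets, since each $u$ may have private external neighbours. Likewise, vertices outside $N[x]$ with many neighbours in $N(x)$, on which your greedy fallback relies, need not exist. What curvature does give is the collective statement $|N(N(u)\setminus N[x])\cap N(x)|>\bigl(1-\frac{1+c_u}{d(u)}\bigr)d(x)-(1+c_u)$, where $c_u=|N(u)\cap N(x)|$.

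The missing idea is how to turn these collective expansions into $s-1$ \emph{disjoint} branch sets that share a common linear-size set of neighbours in $N(x)$. The paper takes the branch sets to be stars $\{u\}\cup S$ with $S\subseteq N(u)\setminus N[x]$, over neighbours $u$ with $d(u)>K$ and $c_u\le C$, and splits into two cases.

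\emph{Small overlaps.} Suppose some $s-1$ such $u_i$ have pairwise external overlaps at most $\min\{d(u_i),d(u_j)\}/(s(s-2))$. Deleting the overlaps leaves sets $S_i$ with $|S_i|\ge\frac{s-1}{s}d(u_i)-(C+1)$. Applying Lemma~\ref{lem:curvature-expansion} to the whole set $S_i$ gives $|N(S_i)\cap N(x)|>\bigl(\frac{s-1}{s}-\frac{1}{2s(s-1)}\bigr)d(x)-(C+1)$. Hence the $s-1$ sets $N(S_i)\cap N(x)$ share more than $\frac{d(x)}{2s}-(s-1)(C+1)$ vertices, giving a $K_{s,t}$ minor with branch sets $\{x\}$, $\{u_i\}\cup S_i$ and singletons.

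\emph{Large overlaps.} Otherwise the complement of the ``large overlap'' graph has no $K_{s-1}$. Tur\'an's theorem, plus orienting each edge towards the endpoint of larger degree, yields some $u$ overlapping heavily with $s(s-1)(s-2)$ neighbours of larger degree. Lemma~\ref{lem:common-intersection} applied inside $N(u)\setminus N[x]$ then gives a $K_{s,t}$ subgraph.

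Your proposal needs this dichotomy, or some substitute that guarantees disjointness, before the degree bound (and hence the theorem) follows.
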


Again, the infinite family of graphs $(K_{s-1,t})_{t\ge 1}$ shows that
the minimum degree assumption cannot be dropped. A natural question is
whether $K_{s,t}$ minors could be replaced by $K_{s,t}$ subgraphs in
the result. This is not the case for $s\ge 2$, as the hypercube $\{0,1\}^d$ has Lin--Lu--Yau curvature $\frac{2}{d}$ along every edge
\cite{cushing} and excludes
$K_{2,3}$ as a subgraph. We will
actually prove a slight strengthening of Theorem \ref{thm:kstmain},
where we only forbid $K_{s,t}$ minors obtained from contracting
disjoint stars (see Theorem \ref{thm:1-shallow}), which can be seen as
the $K_{s,t}$-minors that are the ``closest'' to $K_{s,t}$ subgraphs. In this setting we will need to add a bounded density condition, which is also necessary by the hypercube example.

\medskip

As outerplanar graphs exclude $K_{2,3}$ as a minor and planar graphs
exclude $K_{3,3}$ as a minor we obtain direct extensions of Theorems
\ref{thm:planar} and \ref{thm:outerplanar}. It directly follows from
Euler's formula that any graph embeddable on a fixed surface  excludes
$K_{3,t}$ as a minor, for some $t$ that only depends on the
surface. We therefore get a direct extension of Theorem \ref{thm:planar} to any
surface other that the sphere, with the same minimum degree
assumption. 

\medskip

This leads us to our second main result. The bounds we obtain on the number of vertices of our positively curved graphs are quite large, as the original bound of Lu and Wang on positively curved planar graphs of minimum degree at least 3 \cite{LuWang2026}. We would like to have instead a finer understanding of the positively curved objects under study, for instance a precise list of all these (finitely many) objects. This seems out of reach at the moment for general planar graphs of minimum degree 3. The original motivation of Lu and Wang \cite{LuWang2026} for studying planar graphs was that they can be seen as a graphic analogue of Riemannian surfaces. Here, we go one step further in this direction by restricting ourselves to 5-connected planar triangulations, which can be seen as the closest graphic analogue to smooth Riemannian planes. These additional properties allow for a complete description of the positively curved graphs in the class, and we hope the ideas introduced in this part might help to extend the result beyond triangulations and with less stringent connectivity properties.

\begin{theorem}\label{thm:5ct}
    There are only five 5-connected plane triangulations with positive Lin--Lu--Yau curvature, depicted in Figures \ref{fig:G12}, \ref{fig:G14}, \ref{fig:G15}, \ref{fig:G16}, and \ref{fig:G17}. Each has maximum degree at most 6 and order at most 17.
\end{theorem}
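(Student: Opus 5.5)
The plan is to show first that the maximum degree is at most $6$, then to work out exactly which local configurations of degree-$5$ and degree-$6$ vertices are compatible with positive curvature, and finally to enumerate the finitely many triangulations these constraints allow.

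\emph{Local curvature formula.} In a $5$-connected triangulation every vertex has degree at least $5$. Moreover each vertex $v$ has an induced cycle $C_v$ as its link, with no chords, and any two adjacent vertices $x,y$ have exactly two common neighbours. I will compute $\kappa(xy)$ through the Münch--Wojciechowski characterization: $\kappa(xy)$ is the infimum, over $1$-Lipschitz $f$ with $f(y)-f(x)=1$, of $\nabla_{yx}\Delta f$. Equivalently, one can use the matching formulation of Lin--Lu--Yau curvature, in which one looks for an optimal transport between $\mu_x$ and $\mu_y$ pairing $N(x)$ with $N(y)$ by short paths.

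The neighbourhoods are very structured. $N(x)\setminus\{y\}$ is the path $C_x - y$, and the two common neighbours $a,b$ are its endpoints. The remaining vertices of $N(x)$ and $N(y)$ lie on the two sides of the edge. Using this, I expect a closed-form or near-closed-form upper bound on $\kappa(xy)$ in terms of $\deg x$, $\deg y$ and the degrees of $a$ and $b$. The guiding analogy is the combinatorial Gauss curvature $1-\deg/6$: positive curvature should force small degrees and forbid large clusters of degree-$6$ vertices.

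\emph{Degree bound.} Take $x$ of degree $d\ge 7$ and a neighbour $y$. Almost all of $N(x)$ lies at distance $2$ or $3$ from $N(y)$: there are no chords and no extra common neighbours, thanks to $5$-connectivity. Exhibiting an explicit $1$-Lipschitz potential $f$, for example $f=0$ on $N(x)$ near $x$'s side and increasing towards $y$, should show $\kappa(xy)\le 0$ once $d\ge 7$. So the maximum degree is at most $6$.

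\emph{Restricted configurations.} I then determine precisely which configurations around an edge $xy$ with $\deg x,\deg y\in\{5,6\}$ give positive curvature. I expect the key outcome to be that an edge between two degree-$6$ vertices has curvature at most $0$ unless some further neighbour has degree $5$. Equivalently, every degree-$6$ vertex must have enough degree-$5$ neighbours, and degree-$6$ vertices form only small clusters.

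\emph{Enumeration.} Euler's formula gives $\sum_v(6-\deg v)=12$, so there are exactly $12$ vertices of degree $5$. Combined with the cluster restrictions, this bounds the number of degree-$6$ vertices: each must be adjacent to degree-$5$ vertices, and each degree-$5$ vertex has only $5$ neighbours, giving for instance $n_6\le 5$. Hence $n\le 17$. The remaining task is to enumerate the $5$-connected triangulations with $12$ vertices of degree $5$ and at most $5$ of degree $6$. These are the icosahedron and a few small fullerene-dual-like graphs, obtained by inserting degree-$6$ vertices into the icosahedral structure. For each I verify positivity of every edge, either by symmetry reduction or by computer. This should yield exactly the five graphs on $12,14,15,16,17$ vertices; in particular no such graph on $13$ vertices exists, a known fact about triangulations with degrees $5$ and $6$.

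\emph{Main obstacle.} The main obstacle is the sharp local analysis of degree-$6$ edges. Curvature is near zero there, and the transport plan depends on the degrees at distance $2$. I would handle it by finding explicit Lipschitz potentials that certify $\kappa\le 0$ in each forbidden configuration. I would also try to reduce the number of cases using the rigidity of links in $5$-connected triangulations, namely induced cycles whose neighbouring links overlap only in triangles.
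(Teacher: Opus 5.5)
Your outline has the same shape as the paper's: degree bound, local restrictions, twelve degree-$5$ vertices by Euler, $n_6\le 5$, then enumeration. However, the two steps that carry the weight are not established, and the first rests on a false premise.

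\textbf{The degree bound.} You assert that $5$-connectivity rules out chords, so that distances between $N(x)$ and $N(y)$ are fixed by the local picture and a single potential gives $\kappa_{\mathrm{LLY}}(x,y)\le 0$ once $d(x)\ge 7$. But $5$-connectivity only forbids separating $3$- and $4$-cycles. The cycle $W$ of second neighbours of $x$ can have chords (on the side away from $x$) between vertices at cyclic distance at least $4$. These chords shorten distances between second neighbours and neighbours of $x$.

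The purely local potential of Lemma~\ref{lem:curvature-estimate} only gives $\frac4a+\frac3b-1$, which is positive for $(a,b)\in\{(7,5),(7,6),(8,5),(9,5)\}$. In fact no single-edge argument can work. Let $v$ have degree $7$, with neighbours $v_0,\dots,v_6$ in cyclic order, and let $w_i$ be the third vertex of the face on $v_iv_{i+1}$ away from $v$. Suppose $d(v_0)=5$, $d(v_3)=6$ and $w_0w_3\in E(G)$; this is a chord at cyclic distance $4$ on the outer $8$-cycle, which $5$-connectivity allows. Then $d_G(w_0,v_3)=d_G(w_0,v_4)=2$, and solving the linear program of Theorem~\ref{thm:limit-free} on $N[v]\cup N[v_0]$ gives $\kappa_{\mathrm{LLY}}(v,v_0)\ge \frac{2}{35}>0$.

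So the degree bound is inherently non-local. For the same reason, your hoped-for closed form in terms of $d(x),d(y)$ and the degrees of the two common neighbours does not exist. The missing idea is the paper's chord-forcing argument:
\begin{enumerate}
\item For each edge $vv_i$, build a potential showing $\kappa_{\mathrm{LLY}}(v,v_i)\le 0$ \emph{unless} a specific long chord of $W$ is present (Claims~\ref{cl:72} and~\ref{cl:73}).
\item Apply this to several consecutive edges around $v$ to obtain two crossing chords on the same side of $W$, contradicting planarity.
\end{enumerate}
For degree $7$ this needs a case analysis on the positions of the degree-$6$ neighbours of $v$ (the extra vertices $t_i$ on the outer cycle).

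\textbf{The local configurations and the order bound.} Your second step is only a conjecture, and it is too weak for what you use afterwards. What is needed, and what the paper proves with the same chord-forcing technique, is the following:
\begin{enumerate}
\item A degree-$6$ vertex has either no degree-$6$ neighbour or exactly two, in antipodal position (Proposition~\ref{prop:local-configurations6}).
\item A degree-$5$ vertex has at most two degree-$6$ neighbours (Proposition~\ref{prop:local-configurations5}).
\item If $x$ has degree-$6$ neighbours $y,z$, then their other degree-$6$ neighbours are adjacent (Lemma~\ref{cl:degree-six-continuation}).
\end{enumerate}
Your count (each degree-$6$ vertex has some degree-$5$ neighbours, each degree-$5$ vertex has five neighbours) does not yield $n_6\le 5$. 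Even with the correct constraints, double counting gives only $4n_6\le 24$, i.e.\ $|V(G)|\le 18$. Excluding $18$ needs an extra argument: all degree-$6$ vertices would be bad and would induce a $C_6$, which contradicts Lemma~\ref{cl:degree-six-continuation}.

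Once some bound on the order is proved, replacing the hand classification by a computer check of the finitely many candidates would be legitimate; the paper says as much. But that bound is exactly what your proposal does not supply.
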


\subsection*{Mention of very recent related work} While we were polishing our manuscript, two papers related to the present work were posted on arXiv. In \cite{BLS26}, the authors prove the case $s=2$ of our Theorem \ref{thm:kstmain}, with an explicit (and small) dependence of the degree and order in $t$. In \cite{Song26}, the author proves the corresponding result for $K_t$-minor-free graphs of minimum degree at least $t-1$. As $K_t$-minor-free graphs are $K_{t-1,t-1}$-minor-free, the non-quantitative part of her result is a direct consequence of Theorem \ref{thm:kstmain} with $s=t-1$.

\subsection*{Organization of the paper} We start with some basic graph definitions in Section \ref{sec:preliminaries}, and we introduce the Lin--Lu--Yau curvature in Section \ref{sec:LLY}. After mentioning some useful results on graph minors and extremal combinatorics in Section \ref{sec:prelgraph}, we prove Theorem \ref{thm:kstmain} in Section \ref{sec:kst}. Section \ref{sec:prel5ct} is devoted to some observations and preliminary results on 5-connected planar triangulations, and Section \ref{sec:5ct} contains the proof of Theorem \ref{thm:5ct}.

\section{Graph-theoretic preliminaries}
\label{sec:preliminaries}

Throughout this paper, a graph is a pair $G=(V(G),E(G))$, where $V(G)$ is a finite or countably infinite set of vertices and $E(G)$ is its edge set. All graphs are assumed to be simple, undirected (unless otherwise stated), and locally finite, meaning that every vertex has finitely many neighbours.

\subsection{Basic graph notation}

Let $G$ be a graph. For a vertex $v\in V(G)$, its neighbourhood is denoted by $N(v):=\{u\in V(G):uv\in E(G)\}$, and its closed neighbourhood by $N[v]:=N(v)\cup\{v\}$. The degree of $v$ is $d(v):=|N(v)|$.
We write $\delta(G):=\inf_{v\in V(G)}d(v)$ and $\Delta(G):=\sup_{v\in V(G)}d(v)$ for the minimum and maximum degree parameters of $G$, respectively. We allow $\Delta(G)=\infty$.

\smallskip

For a set $S\subseteq V(G)$, we write
  $N(S):=\bigcup_{v\in S}N(v)$. We emphasize that this is a somewhat
  unusual  convention, and $N(S)$ need not
  be disjoint from $S$.

\smallskip

The subgraph of $G$ induced by some subset $S\subseteq V(G)$ is denoted by $G[S]$.
In particular, $G[N(v)]$ and $G[N[v]]$ denote the subgraphs induced by
the open and closed neighbourhoods of $v$, respectively.

\smallskip

For vertices $u,v\in V(G)$, we denote their graph distance by
$d_G(u,v)$.

\smallskip

For $n\in\mathbb{N}$, we write $[n]:=\{1,\ldots,n\}$. If $X$ is a
finite set and $k\geq 0$ is an integer, then $\binom{X}{k}$ denotes
the family of all $k$-element subsets of $X$.
%We denote by $K_r$ the complete graph on $r$ vertices and by $K_{s,t}$ the complete bipartite graph with parts of sizes $s$ and $t$.

\subsection{Maximum average degree and graph minors}

The \emph{average degree} of a finite graph $H$ is the average of its vertex
degrees, or equivalently $2|E(H)|/|V(H)$ if $H$ is non-empty, and 0 otherwise. 
The 
\emph{maximum average degree} of a locally finite graph $G$,
denoted by $\mad(G)$, is the supremum of the average degrees its
finite subgraphs.
Thus $\mad(G)\leq d$ if and only if every finite subgraph $H$ of $G$ satisfies $2|E(H)|\leq d|V(H)|$.

\medskip
Let $G$ be a graph and let $H$ be a finite graph. We say that $H$ is a \emph{minor} of $G$, and write $H\preccurlyeq G$, if $H$ can be obtained from a finite subgraph of $G$ by a finite sequence of vertex deletions, edge deletions, and edge contractions.
Equivalently, $H\preccurlyeq G$ if there exists a family $(B_v)_{v\in V(H)}$ of nonempty, pairwise disjoint subsets of $V(G)$ such that $G[B_v]$ is connected for every $v\in V(H)$ and, whenever $uv\in E(H)$, there is an edge of $G$ with one endpoint in $B_u$ and the other in $B_v$. Such a family is called a \emph{minor model} of $H$ in $G$, and the sets $B_v$ are called its \emph{branch sets}.
If $H$ is not a minor of $G$, we say that $G$ is $H$-minor-free and write $H\not\preccurlyeq G$.

\medskip

We shall use the following theorem of Mader~\cite{Mader1968}.

\begin{theorem}[\cite{Mader1968}]
\label{thm:mader}
For every graph $H$, there exists a constant $c_H$ such that every
finite $H$-minor-free graph has average degree at most $c_H$.
\end{theorem}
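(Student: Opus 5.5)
We would prove the statement in the quantitative form of Mader's original argument. Let $r:=|V(H)|$. Since $H$ is a subgraph of $K_r$, every $K_r$-minor-free graph is $H$-minor-free, so it suffices to treat $H=K_r$. We aim to show by induction on $r\ge 3$ that every finite graph $G$ with $|E(G)|\ge 2^{r-3}|V(G)|$ contains $K_r$ as a minor. This gives the theorem with $c_H=2^{r-2}$; for $r\le 2$ any constant works, since $K_1$-minor-free graphs are empty and $K_2$-minor-free graphs are edgeless.

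\emph{Base case.} For $r=3$, the hypothesis $|E(G)|\ge|V(G)|$ forces a cycle, and contracting a cycle yields $K_3$.

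\emph{Choosing a minimal minor.} For $r\ge4$, assume $|E(G)|\ge 2^{r-3}|V(G)|$. Among all minors $G'$ of $G$ with $|E(G')|\ge 2^{r-3}|V(G')|$ and $|V(G')|\ge 1$, pick one that is minimal in the minor order; it exists because $G$ is finite. Write $k=2^{r-3}$ and $G'$ for this minimal minor.
\begin{itemize}
\item Since $|E(G')|\ge k|V(G')|$ and $G'$ is simple, $G'$ has edges. By minimality, deleting an edge breaks the inequality. Hence $|E(G')|=\lceil k|V(G')|\rceil$, which lies between $k|V(G')|$ and $k|V(G')|+1$.
\item Contracting an edge $xy$ removes one vertex and $1+|N(x)\cap N(y)|$ edges. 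Minimality says the result violates the inequality, which gives
\[
|E(G')|-1-|N(x)\cap N(y)|<k(|V(G')|-1).
\]
Combined with $|E(G')|\ge k|V(G')|$, this yields $|N(x)\cap N(y)|>k-1$, that is, $|N(x)\cap N(y)|\ge k$ for every edge $xy$.
\end{itemize}

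\emph{Passing to a neighbourhood.} Since $|E(G')|\le k|V(G')|+1$, the average degree of $G'$ is at most $2k+2/|V(G')|$. So some vertex $v$ has degree at most $2k$. If $|V(G')|$ is tiny, the $+1$ needs a moment's care; alternatively one can use a strict-inequality variant of the invariant to avoid it. Consider $F=G'[N(v)]$. Every $u\in N(v)$ has at least $k$ common neighbours with $v$, so $F$ has minimum degree at least $k$. Therefore
\[
|E(F)|\ge \tfrac{k}{2}|V(F)|=2^{r-4}|V(F)|.
\]
By the induction hypothesis, $F$ has a $K_{r-1}$ minor. Adding $\{v\}$ as a new branch set, adjacent to every vertex of $F$, gives a $K_r$ minor in $G'$ and hence in $G$.

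\emph{Main obstacle.} The delicate step is the bookkeeping in the minimality argument. We must set up the extremal invariant, including the exact threshold and the requirement that $G'$ be non-empty, so that two things hold simultaneously: minimality under edge deletion pins down the edge count closely enough to produce a low-degree vertex, and minimality under contraction forces $k$ common neighbours along every edge. The rounding issue described above is where we expect to need a careful adjustment of the constants.
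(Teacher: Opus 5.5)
The paper does not prove this statement: it quotes it from \cite{Mader1968}, and Section~\ref{sec:kst} uses only its qualitative content, namely that some bound $d=d_{s,t}$ on $\mad(G)$ exists for $K_{s,t}$-minor-free $G$. Your argument is the standard minor-minimal proof of Mader's exponential bound, and it is correct up to the small repairs below. Compared with the citation, it makes the paper self-contained and gives the explicit value $c_H=2^{|V(H)|-2}$, hence $d_{s,t}\le 2^{s+t-2}$. That weak constant is harmless, since the paper only needs some constant.

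\textbf{Points to tidy.}

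First, the reduction is stated backwards. Since $H\preccurlyeq K_r$, every $H$-minor-free graph is $K_r$-minor-free, and this is the direction that lets you take $c_H:=c_{K_r}$. The converse you wrote is false: a forest is $K_3$-minor-free yet may contain a path on three vertices.

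Second, the inductive statement must be restricted to graphs with at least one vertex. The empty graph satisfies $0\ge 2^{r-3}\cdot 0$ but has no $K_r$ minor, and your base case tacitly assumes $|V(G)|\ge 1$. Consequently, in the inductive step you must ensure that $F=G'[N(v)]$ is nonempty, i.e.\ that $v$ is not isolated. You can get this from minimality under vertex deletion ($G'$ has at least two vertices, and deleting an isolated one preserves the inequality). More simply, take $v$ to be an endpoint of an edge.

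Third, the ``main obstacle'' you anticipate does not exist. Since $k=2^{r-3}$ is an integer, edge-deletion minimality gives $|E(G')|=k|V(G')|$ exactly. More to the point, the bound $d(v)\le 2k$ is never used. The estimate $|E(F)|\ge\frac{k}{2}|V(F)|$ relies only on the common-neighbour bound from contraction-minimality, which holds along every edge. So any non-isolated $v$ works, and the edge-deletion step and the low-degree vertex can be dropped. Choosing a vertex of small degree matters only in sharper arguments, such as those of Kostochka and Thomason, where the neighbourhood graph must be small as well as dense.
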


We shall also use the following classical theorem of Tur\'an~\cite{turan1941extremal}.

\begin{theorem}[\cite{turan1941extremal}]\label{thm:turan}
Let $r\geq 1$ be an integer. If a graph $G$ on $n$ vertices does not contain $K_{r+1}$ as a subgraph, then
\[
|E(G)|\leq \left(1-\frac{1}{r}\right)\frac{n^2}{2}.
\]
\end{theorem}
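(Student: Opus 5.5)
We will use the probabilistic (Caro--Wei) argument, applied to cliques of $G$ rather than independent sets. Let $G$ have $n$ vertices and $m$ edges, with $n\ge 1$; the case $n=0$ is trivial. Order $V(G)$ by a uniformly random permutation $\sigma$. Let $S$ be the set of vertices $v$ that precede, in $\sigma$, every vertex non-adjacent to $v$ (other than $v$ itself).

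The first step is that $S$ is a clique. If $u,v\in S$ were distinct and non-adjacent, each would have to precede the other in $\sigma$, which is impossible. Since $G$ has no $K_{r+1}$, we get $|S|\le r$ for every permutation, and hence $\mathbb{E}|S|\le r$.

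The second step computes $\mathbb{E}|S|$. A vertex $v$ lies in $S$ exactly when it comes first among itself and its $n-1-d(v)$ non-neighbours. This event has probability $1/(n-d(v))$, so
\[
r\;\ge\;\mathbb{E}|S|=\sum_{v\in V(G)}\frac{1}{n-d(v)}.
\]

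The third step is a convexity estimate. The function $x\mapsto 1/x$ is convex on $(0,\infty)$, and each $n-d(v)$ is positive because $d(v)\le n-1$. By Jensen's inequality (equivalently Cauchy--Schwarz),
\[
\sum_{v}\frac{1}{n-d(v)}\;\ge\;\frac{n^2}{\sum_v (n-d(v))}=\frac{n^2}{n^2-2m}.
\]
Combining the two displays gives $n^2\le r(n^2-2m)$. Rearranging yields $2m\le (1-\tfrac1r)n^2$, which is the claimed bound.

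No step here is a real obstacle. The only points needing care are the positivity of the denominators $n-d(v)$ and checking that $S$ is genuinely a clique. If a more classical route were preferred, the fallback would be Zykov symmetrization, which reduces to complete $r$-partite graphs, where the bound follows from maximizing $\sum_{i<j}n_in_j$ subject to $\sum_i n_i=n$.
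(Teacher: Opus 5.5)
Your argument is correct. $S$ is a clique for every ordering, and $\Pr[v\in S]=1/(n-d(v))$. The denominators are positive, since $\sum_v(n-d(v))=n^2-2m\ge n>0$. The rearrangement $n^2\le r(n^2-2m)\iff 2m\le(1-\frac1r)n^2$ is right, including the degenerate case $r=1$.

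The paper does not prove this statement at all. It quotes it as Tur\'an's classical theorem and uses it exactly once: for the complement $\overline{H}$ of the auxiliary graph in Case~2 of the proof of Theorem~\ref{thm:maximum-degree}, with $r=s-2\ge 1$. Your proof is therefore a self-contained replacement for a citation, not an alternative to an argument in the paper.

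Compared with the classical routes (the inductive proof that removes an $r$-clique, or the Zykov symmetrization you mention as a fallback), your probabilistic argument is shorter. It also proves the stronger inequality $\sum_v \frac{1}{n-d(v)}\le \omega(G)$, from which the edge bound follows by convexity. What it does not give is the characterization of the extremal graphs as the balanced complete $r$-partite graphs, which symmetrization provides. The paper only needs the numerical edge bound, valid for every $r\ge1$, so your route fully suffices for its purposes.
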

\section{Lin--Lu--Yau curvature}\label{sec:LLY}
Throughout this section, let $G$ be a simple, connected,
locally finite graph with at least two vertices.
Before introducing the Lin--Lu--Yau curvature~\cite{LinLuYau2011}, we shall recall a few definitions.

\smallskip

A \emph{probability measure} on $V(G)$ is a function $\mu\colon V(G)\to[0,1]$ satisfying
$
\sum_{z\in V(G)}\mu(z)=1.
$
Let $\mu$ and $\nu$ be finitely supported probability measures on $V(G)$. A \emph{coupling}, or \emph{transport plan}, of $\mu$ and $\nu$ is a function
$
\pi\colon V(G)\times V(G)\to[0,\infty)
$
such that
$
\sum_{v\in V(G)}\pi(u,v)=\mu(u)
$
for every $u\in V(G)$ and
$
\sum_{u\in V(G)}\pi(u,v)=\nu(v)
$
for every $v\in V(G)$. We denote by $\Pi(\mu,\nu)$ the set of all couplings of $\mu$ and $\nu$.

\medskip

Let $\mu$ and $\nu$ be finitely supported probability measures on $V(G)$. Their \emph{$1$-Wasserstein distance} is
\begin{equation}
\label{eq:wasserstein-distance}
W_1(\mu,\nu)
:=
\inf_{\pi\in\Pi(\mu,\nu)}
\sum_{u,v\in V(G)}d_G(u,v)\pi(u,v).
\end{equation}

Thus $W_1(\mu,\nu)$ is the minimum transportation cost for moving $\mu$ to $\nu$ when transporting one unit of mass from $u$ to $v$ has cost $d_G(u,v)$.

\begin{definition}[Lazy random walk measure]
Let $p\in[0,1]$ and $x\in V(G)$. The $p$-lazy random walk measure based at $x$ is the probability measure $\mu_x^p$ given by
\begin{equation}
\label{eq:lazy-random-walk}
\mu_x^p(z)=
\begin{cases}
p, & \text{ if } z=x,\\
\dfrac{1-p}{d(x)}, & \text{ if } z\sim x,\\
0, & \text{otherwise}.
\end{cases}
\end{equation}
\end{definition}

In other words, the walk remains at $x$ with probability $p$ and moves to a uniformly chosen neighbour of $x$ with probability $1-p$.

\begin{definition}[Ollivier and Lin--Lu--Yau curvature]
Let $x,y\in V(G)$ be distinct vertices and let $p\in[0,1)$. The $p$-Ollivier curvature of $x$ and $y$ is
\begin{equation}
\label{eq:p-ollivier-curvature}
\kappa_p(x,y)
:=
1-\frac{W_1(\mu_x^p,\mu_y^p)}{d_G(x,y)}.
\end{equation}
The Lin--Lu--Yau curvature (abbreviated as LLY curvature) of $x$ and $y$ is
\begin{equation}
\label{eq:lly-curvature-definition}
\kappa_{\LLY}(x,y)
:=
\lim_{p\to1^-}\frac{\kappa_p(x,y)}{1-p}.
\end{equation}
\end{definition}

The first definition is Ollivier's coarse Ricci curvature applied to
the lazy random walk measures~\cite{Ollivier2009}, while the second is
its Lin--Lu--Yau modification~\cite{LinLuYau2011}.

\smallskip

 It was proved in~\cite{LinLuYau2011} that if
$\kappa_{\LLY}(x,y)>0$ for every edge $xy$, then
$\kappa_{\LLY}(x,y)>0$ for every pair $x,y$ of vertices. So for the
purpose of studying positive curvature, we can thus restrict our
attention to curvature along edges. In particular, if $xy\in E(G)$, then $d_G(x,y)=1$ and
$
\kappa_p(x,y)=1-W_1(\mu_x^p,\mu_y^p).
$

\medskip

We say that $G$ has 
\emph{positive LLY curvature} if $\kappa_{\LLY}(x,y)>0$ for every edge
$xy\in E(G)$. A graph with positive LLY curvature is said to be
\emph{positively LLY curved}.

\medskip

For our purposes, it will be convenient to use the following
limit-free characterization of LLY curvature.

\begin{definition}[Normalized graph Laplacian]
For a function $f\colon V(G)\to\mathbb{R}$, the normalized graph Laplacian is defined by
\begin{equation}
\label{eq:normalized-laplacian}
\Delta f(v)
:=
\frac{1}{d(v)}
\sum_{w\sim v}\bigl(f(w)-f(v)\bigr).
\end{equation}
\end{definition}

\begin{definition}[Lipschitz functions]
A function $f\colon V(G)\to\mathbb{R}$ is called $1$-Lipschitz if
$
|f(u)-f(v)|\leq d_G(u,v)
$
for all $u,v\in V(G)$. We denote by $\Lip(1)$ the set of all $1$-Lipschitz functions on $V(G)$.
\end{definition}

M\"unch and Wojciechowski~\cite{MunchWojciechowski2019} proved the following characterization of Lin--Lu--Yau curvature.

\begin{theorem}[\cite{MunchWojciechowski2019}]
\label{thm:limit-free}
Let $xy\in E(G)$. Then
\begin{equation}
\label{eq:limit-free-curvature}
\kappa_{\LLY}(x,y)
=
\inf_{\substack{f\in\Lip(1)\\ f(y)-f(x)=1}}
\bigl(\Delta f(x)-\Delta f(y)\bigr).
\end{equation}
\end{theorem}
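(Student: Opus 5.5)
The plan is to derive the formula from Kantorovich--Rubinstein duality for $W_1$ together with a finite-dimensionality argument that lets us pass to the limit $p\to1^-$ exactly. By duality, for finitely supported $\mu,\nu$ we have $W_1(\mu,\nu)=\sup_{f\in\Lip(1)}\bigl(\sum_z f(z)\nu(z)-\sum_z f(z)\mu(z)\bigr)$. The key observation is the identity
\[
\sum_{z} f(z)\mu_x^p(z)=p f(x)+\frac{1-p}{d(x)}\sum_{w\sim x}f(w)=f(x)+(1-p)\Delta f(x).
\]
Hence, for the edge $xy$,
\[
\kappa_p(x,y)=\inf_{f\in\Lip(1)}\Bigl(1-\bigl(f(y)-f(x)\bigr)-(1-p)\bigl(\Delta f(y)-\Delta f(x)\bigr)\Bigr).
\]
Restricting to $f$ with $f(y)-f(x)=1$ already gives the upper bound $\kappa_p(x,y)\le (1-p)\inf_{f(y)-f(x)=1}(\Delta f(x)-\Delta f(y))$ for every $p$. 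Dividing by $1-p$ and letting $p\to1^-$ yields $\kappa_{\LLY}(x,y)\le$ the right-hand side of \eqref{eq:limit-free-curvature}.

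For the reverse inequality, I reduce to a finite polytope. The quantity above depends only on the values of $f$ on the finite set $B:=N[x]\cup N[y]$. Conversely, any function on $B$ that is $1$-Lipschitz for $d_G$ restricted to $B$ extends to a $1$-Lipschitz function on $V(G)$ by the McShane extension $\tilde f(v)=\min_{b\in B}(f(b)+d_G(b,v))$. The objective is also invariant under adding constants, so we may normalize $f(x)=0$. The feasible set is then a compact polytope $P\subset\mathbb{R}^B$, and the objective is affine in $f$ for each fixed $p$. Therefore the infimum is attained at one of the finitely many vertices of $P$, and this set of vertices does not depend on $p$.

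The main step, and the one requiring care, is to show that for $p$ close to $1$ the infimum is attained at a vertex with $f(y)-f(x)=1$. Every vertex $f$ with $f(y)-f(x)<1$ has $1-(f(y)-f(x))\ge\varepsilon$ for some $\varepsilon>0$ that is uniform over the finite vertex set. Moreover, the Laplacian terms are bounded, since $|\Delta f|\le 1$ for $1$-Lipschitz $f$, so the correction term has size at most $2(1-p)$. On the other hand, the value at any vertex with $f(y)-f(x)=1$ is $O(1-p)$. Such vertices exist, because the face $P\cap\{f(y)=1\}$ is nonempty (take $f=d_G(x,\cdot)$) and its vertices are vertices of $P$. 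Consequently, for $1-p$ small enough, the minimum over vertices is achieved on this face. On the face, the objective equals $(1-p)(\Delta f(x)-\Delta f(y))$, and its minimum over the face equals the infimum over all $f\in\Lip(1)$ with $f(y)-f(x)=1$. This gives the exact equality
\[
\kappa_p(x,y)=(1-p)\inf_{\substack{f\in\Lip(1)\\ f(y)-f(x)=1}}\bigl(\Delta f(x)-\Delta f(y)\bigr)
\]
for all $p$ sufficiently close to $1$. Dividing by $1-p$ and taking the limit proves \eqref{eq:limit-free-curvature}, and also shows that the limit defining $\kappa_{\LLY}$ exists.
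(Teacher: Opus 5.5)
The paper does not prove this statement. It imports it as a black box from M\"unch and Wojciechowski, so there is no internal argument to compare against.

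Your proof is correct and self-contained. The identity $\sum_z f(z)\mu_x^p(z)=f(x)+(1-p)\Delta f(x)$, combined with Kantorovich--Rubinstein duality, writes $\kappa_p(x,y)$ as the minimum of an affine functional $\Phi_p$ over the polytope $P$. This polytope is finite-dimensional by local finiteness. It is bounded, since $|f(b)|\le d_G(x,b)\le 2$ after normalizing $f(x)=0$. By the McShane extension it represents $\Lip(1)$ modulo constants exactly. The gap argument is also sound: off the face $\{f(y)=1\}$ vertices satisfy $\Phi_p\ge\varepsilon-2(1-p)$, on the face they satisfy $\Phi_p\le 2(1-p)$, and this nonempty face does force the minimum onto it once $1-p<\varepsilon/4$.

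Compared with the cited source, which works with general graph Laplacians, your route is specialized to the simple lazy random walk. In return it proves more than the limit formula: $\kappa_p(x,y)/(1-p)$ is exactly constant for $p$ close to $1$. In particular, the limit in the definition of $\kappa_{\LLY}$ exists, a point the paper's definition takes for granted.

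Two easy sharpenings are available.
\begin{itemize}
\item \emph{Explicit threshold.} At a vertex $f$ of $P$, the tight constraints $|f(u)-f(v)|=d_G(u,v)$ must connect every point of $B$ to $x$. Otherwise you could shift $f$ by $\pm\delta$ on a component not containing $x$ and write $f$ as a midpoint of two points of $P$. Hence every vertex of $P$ is integer-valued, so $\varepsilon=1$, and your argument gives the exact identity for all $p\ge 3/4$. This is in line with the known linearity of $p\mapsto\kappa_p(x,y)$ on an interval ending at $p=1$.
\item \emph{Non-adjacent pairs.} The same argument applies verbatim to non-adjacent $x,y$, using the face $\{f(y)-f(x)=d_G(x,y)\}$ and dividing by $d_G(x,y)$. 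This recovers the general-pair form of the M\"unch--Wojciechowski formula.
\end{itemize}
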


This limit-free characterization is the form of LLY curvature used
throughout the proof.

\section{Some preliminary results }
\label{sec:prelgraph}

The proof of Theorem~\ref{thm:maximum-degree} repeatedly uses two
principles. The first is geometric: positive curvature along an edge
forces a substantial subset of the neighbourhood of one endpoint to
have neighbours in a correspondingly large part of the neighbourhood
of the other, up to an explicit error term. We will use the following
form of this observation, which is due to Liu, Lu and Wang \cite[Lemma~1]{LuWang2026}.

\begin{lemma}{\cite[Lemma~1]{LuWang2026}}
\label{lem:curvature-expansion}
Let $G$ be a simple connected locally finite graph, let $xy\in E(G)$,
and let $S\subseteq N(y)\setminus\{x\}$.
Put $\sigma:=|S|$, $k:=|S\cap N(x)|$, and
$c:=|N(x)\cap N(y)|$. If $\kappa_{\LLY}(x,y)>0$, then
\begin{equation}
\label{eq:curvature-expansion}
|N(S)\cap N(x)|
>
\frac{\sigma}{d(y)}d(x)
-(k+1+c)
+|N(S)\cap N(x)\cap N(y)|.
\end{equation}
In particular,
$$
|N(S)\cap N(x)|
>
\frac{|S|}{d(y)}d(x)-(k+1+c).
$$
\end{lemma}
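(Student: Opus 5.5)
We prove the inequality with the limit-free characterization of Theorem~\ref{thm:limit-free}. Since $\kappa_{\LLY}(x,y)>0$, every $1$-Lipschitz $f$ with $f(y)-f(x)=1$ satisfies $\Delta f(x)>\Delta f(y)$. The plan is to choose a test function that is large on $S$ and small on the part of $N(x)$ that is far from $S$. Expanding the two Laplacians then turns the strict inequality into a counting statement about $|N(S)\cap N(x)|$.

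Set $T:=N(S)\cap N(x)$, where $N(S)$ is the union of the neighbourhoods of vertices of $S$. Define $f$ by
\[
f(y)=1,\qquad f(z)=2 \text{ for } z\in S\setminus N(x),\qquad f(z)=1 \text{ for } z\in S\cap N(x),
\]
\[
f(z)=1 \text{ for } z\in T\setminus (S\cup\{y\}),\qquad f(x)=0,\qquad f(z)=0 \text{ for } z\in N(x)\setminus (T\cup S\cup\{y\}).
\]
Elsewhere we let $f$ be the largest $1$-Lipschitz extension, namely $f(z)=\min_w\bigl(f(w)+d_G(w,z)\bigr)$ over the vertices $w$ where $f$ has been prescribed.

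First we check that the prescribed values are mutually $1$-Lipschitz. Points at value $2$ lie in $S\setminus N(x)$, so they are at distance $2$ from $x$ and adjacent only to vertices of $T\cup\{y\}$ or vertices outside $N[x]$. In particular they are never adjacent to a point at value $0$ in $N(x)$, because every neighbour in $N(x)$ of a vertex of $S$ lies in $T$. Points at value $0$ are $x$ and vertices of $N(x)\setminus T$, and these are at distance at least $2$ from all of $S\setminus N(x)$. The remaining constraints involve only differences of at most $1$ between adjacent vertices, so the extension exists.

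Next we bound the two Laplacians.

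For $x$: every neighbour has value $0$ or $1$, and the neighbours with value $1$ are $y$, the vertices of $S\cap N(x)$, and those of $T$. So
\[
d(x)\,\Delta f(x)\le |T\cup\{y\}\cup(S\cap N(x))|\le |T|+1+k,
\]
with $y$ and $S\cap N(x)$ counted separately, which uses $S\cap N(x)\subseteq N(x)\cap N(y)$.

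For $y$: at least $\sigma-k$ neighbours have value $2$, giving $+1$ each. Neighbours at value $0$ give $-1$. These are $x$ and the common neighbours of $x$ and $y$ lying outside $T\cup S$, of which there are at most $c-|T\cap N(y)|-k+\dots$ after accounting for overlaps. Any other neighbour of $y$ has $f\ge 0$ by Lipschitzness, so $f-1\ge -1$. This estimate is too crude, so we instead assign value $1$ to all remaining neighbours of $y$, making their contribution $0$. Lipschitzness still holds: such a vertex is outside $N(x)$ and adjacent to $y$, and it may be adjacent to value-$0$ vertices of $N(x)$, but it is never adjacent to $x$. It follows that
\[
d(y)\,\Delta f(y)\ge (\sigma-k)-1-\bigl(c-|T\cap N(y)|\bigr).
\]

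Combining these with $\Delta f(x)>\Delta f(y)$ and rearranging yields the stated bound, with the $+|N(S)\cap N(x)\cap N(y)|$ term coming from the common neighbours that lie in $T$ and therefore do not contribute $-1$. Since $\sigma\le d(y)$, a factor $d(x)/d(y)$ absorbs part of the loss. The main obstacle is exactly this bookkeeping. We must make sure that the normalizations $1/d(x)$ and $1/d(y)$ combine to give $\frac{\sigma}{d(y)}d(x)$, multiplying the $y$-inequality by $d(x)/d(y)$ and using that the error terms from the $y$ side are bounded by $d(x)/d(y)$ times counts at most $c+1$. We also must handle the overlaps among $S$, $T$, $N(x)\cap N(y)$ and $\{x\}$ consistently. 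If the constants do not match exactly, we will scale the values on $S\setminus N(x)$ or adjust which common neighbours receive value $1$.
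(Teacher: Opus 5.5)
Your proposal has a genuine gap, and it is not a matter of bookkeeping: your test function cannot yield the lemma. The paper quotes this lemma from Lu--Wang without proof, so the comparison below is with an argument that does work.

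With $f=2$ on $S\setminus N(x)$ and $f=0$ on $N(x)\setminus T$, your two estimates combine to
\[
\frac{|T|+1+k}{d(x)}>\frac{\sigma-(k+1+c)+|T\cap N(y)|}{d(y)},
\]
that is, $|T|>\frac{d(x)}{d(y)}\bigl(\sigma-(k+1+c)+|T\cap N(y)|\bigr)-(k+1)$. The error term is therefore multiplied by $d(x)/d(y)$.

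\begin{itemize}
\item Whenever $d(x)\ge d(y)$, this lower bound on $|T|$ is smaller than the claimed one by at least $k+1$.
\item For $|S|=1$ it is always vacuous, since then $\sigma-(k+1+c)+|T\cap N(y)|\le -k\le 0$. Singletons are exactly how the lemma is used in Proposition~\ref{prop:low-degree}, with $d(x)\gg d(y)$.
\item Concretely, if $N(y)=\{x,z\}$, $S=\{z\}$ and $z\notin N(x)$, your $f$ gives $\Delta f(y)=0$ and $\Delta f(x)=|T|/d(x)>0$. So it never contradicts positive curvature, whereas the lemma asserts $|T|>d(x)/2-1$.
\item That bound is sharp. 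If $N(x)\setminus\{y\}$ is a clique of which $z$ sees $m\ge 1$ vertices, then $\kappa_{\LLY}(x,y)>0$ if and only if $m>d(x)/2-2$.
\end{itemize}

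The remark that $\sigma\le d(y)$ lets the factor absorb the loss goes in the wrong direction. Rescaling the values on $S\setminus N(x)$ alone does not repair it.

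The missing idea is to give the value $-1$ to the neighbours of $x$ that are far from $\{y\}\cup S$, and the value $0$ to all other neighbours of $y$. Then both Laplacians contain a term $-1$ that cancels, and the error terms end up divided by $d(x)$ only.

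Assume $S\neq\varnothing$ (otherwise the claim is trivial), so that $y\in T$. Set
\begin{itemize}
\item $f(x)=0$;
\item $f=1$ on $\{y\}\cup S$;
\item $f=0$ on $\bigl(N(y)\setminus(S\cup\{x\})\bigr)\cup\bigl(T\setminus(S\cup\{y\})\bigr)$;
\item $f=-1$ on $M:=N(x)\setminus\bigl(N(S)\cup N[y]\bigr)$.
\end{itemize}
These sets are pairwise disjoint. The only prescribed values that differ by $2$ are those on $\{y\}\cup S$ and on $M$, and these vertices are nonadjacent by the definition of $M$. Hence $f$ extends to a $1$-Lipschitz function with $f(y)-f(x)=1$.

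Since $N(x)\setminus M=T\cup\bigl(N(x)\cap N(y)\bigr)$, we get $|M|=d(x)-|T|-c+|T\cap N(y)|$. The neighbours of $x$ with value $1$ are $y$ and the $k$ vertices of $S\cap N(x)$. Hence
\[
\Delta f(x)=\frac{|T|+1+k+c-|T\cap N(y)|}{d(x)}-1,
\qquad
\Delta f(y)=\frac{\sigma}{d(y)}-1,
\]
and $\Delta f(x)>\Delta f(y)$ is exactly the claimed inequality.

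Note that $S$ receives the value $1$, not $2$. In the clique example above, this is the extremal function.
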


The second principle we use has an extremal combinatorics flavour: a
sufficiently large family of large subsets of a common finite set
contains a prescribed number of sets with a large common
intersection. This result will be used in order to construct
complete bipartite minors. It is obtained using the classical
K\H{o}v\'ari--S\'os--Tur\'an counting argument \cite{KovariSosTuran1954}. We provide a proof in Appendix \ref{sec:app} for completeness. 

\begin{lemma}
\label{lem:common-intersection}
Let $r,q,N\in\mathbb{N}$, and let $S_1,\ldots,S_q\subseteq[N]$. Suppose that
$|S_i|\geq m$ for every $i\in[q]$, where $m>0$. If
$q\geq rN/m$, then there exist distinct indices
$i_1,\ldots,i_r\in[q]$ such that
\begin{equation*}
\label{eq:common-intersection-bound}
|S_{i_1}\cap\cdots\cap S_{i_r}|
\geq N\left(\frac{m}{eN}\right)^r,
\end{equation*}
where $e\approx 2.718$ is the base of the natural logarithm.
\end{lemma}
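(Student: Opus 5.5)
We will prove Lemma~\ref{lem:common-intersection} by the standard double-counting argument of K\H{o}v\'ari--S\'os--Tur\'an, combined with Jensen's inequality for the convex function $x\mapsto\binom{x}{r}$. For each element $j\in[N]$, let $d_j:=|\{i\in[q]: j\in S_i\}|$ be its degree in the bipartite incidence graph between $[q]$ and $[N]$. Then $\sum_{j}d_j=\sum_i|S_i|\ge qm$.

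We count pairs $(J,j)$ with $J\in\binom{[q]}{r}$ and $j\in\bigcap_{i\in J}S_i$. Counting by $j$ gives the total
\[
T:=\sum_{j\in[N]}\binom{d_j}{r}.
\]
Averaging over the $\binom{q}{r}$ choices of $J$, some $r$-set $\{i_1,\dots,i_r\}$ satisfies
\[
|S_{i_1}\cap\cdots\cap S_{i_r}|\ge T/\binom{q}{r}.
\]
So it suffices to show that $T\ge N(m/(eN))^r\binom{q}{r}$.

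By convexity of the function $x\mapsto\binom{x}{r}$ (extended as $0$ for $x<r-1$, which is convex on $[0,\infty)$), Jensen gives
\[
T\ge N\binom{\bar d}{r},\qquad \bar d:=\frac{1}{N}\sum_j d_j\ge \frac{qm}{N}.
\]
The hypothesis $q\ge rN/m$ ensures $\bar d\ge r$, so we are in the range where $\binom{x}{r}$ is given by the polynomial and is increasing. It then remains to bound the ratio $\binom{\bar d}{r}/\binom{q}{r}$ from below by $(m/(eN))^r$.

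\textbf{Main obstacle: the ratio estimate.} We use $\binom{q}{r}\le q^r/r!$ together with $\binom{\bar d}{r}\ge (\bar d-r+1)^r/r!$, or more cleanly $\binom{x}{r}\ge (x/r)^r$ for $x\ge r$, combined with $\binom{q}{r}\le (eq/r)^r$. These give
\[
\frac{\binom{\bar d}{r}}{\binom{q}{r}}\ge \frac{(\bar d/r)^r}{(eq/r)^r}=\left(\frac{\bar d}{eq}\right)^r\ge\left(\frac{m}{eN}\right)^r,
\]
using $\bar d\ge qm/N$. This is exactly the claimed bound. The condition $q\ge rN/m$ is used precisely to guarantee $\bar d\ge r$, which is what makes the estimate $\binom{x}{r}\ge(x/r)^r$ valid. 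Distinctness of the indices $i_1,\dots,i_r$ is automatic because $J$ is an $r$-subset.
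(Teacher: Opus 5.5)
Your proposal is correct and follows essentially the same route as the paper's appendix proof: double counting pairs consisting of an element and an $r$-subset of the sets containing it, Jensen's inequality for $x\mapsto\binom{x}{r}$, and then the bounds $\binom{x}{r}\ge(x/r)^r$ for $x\ge r$ and $\binom{q}{r}\le(eq/r)^r$. You leave implicit that $q\ge r$, which is needed for an $r$-subset $J$ to exist to average over; it follows at once from $r\le\bar d\le q$ (the paper instead deduces it from $m\le N$).
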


\section{The order of $K_{s,t}$-minor-free graphs with positive LLY curvature}\label{sec:kst}

In the remainder, we write $\mathcal{H}_{s,t}$ for the
class of all connected locally finite $K_{s,t}$-minor-free graphs with minimum degree at least $s$
and positive LLY curvature. 
The goal of this section is to prove Theorem \ref{thm:kstmain}, which
states that $\mathcal{H}_{s,t}$ contains only finitely many graphs. For this it will be enough to prove that such
graphs have uniformly bounded maximum degree.

\begin{theorem}
\label{thm:maximum-degree}
For all integers $2\leq s\leq t$, there exists an integer
$\Delta_{s,t}\geq 2$ such that every graph in $\mathcal{H}_{s,t}$ has maximum degree at most $\Delta_{s,t}$.
\end{theorem}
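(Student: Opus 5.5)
We argue by contradiction: suppose $G\in\mathcal{H}_{s,t}$ has a vertex $x$ of very large degree $D$. The aim is to build a $K_{s,t}$ minor, contradicting the hypothesis. The two tools are Lemma~\ref{lem:curvature-expansion}, to get expansion from curvature, and Lemma~\ref{lem:common-intersection}, to extract large common neighbourhoods. Mader's theorem (Theorem~\ref{thm:mader}) gives $\mad(G)\le c:=c_{K_{s,t}}$. Hence the graph $G[N(x)]$ has average degree at most $c$, and so at least $D/2$ neighbours $y$ of $x$ have few neighbours inside $N(x)$, say $|N(x)\cap N(y)|\le 2c$.

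\textbf{Step 1: neighbours of $x$ have bounded degree.} Fix such a $y$ and apply Lemma~\ref{lem:curvature-expansion} to the edge $xy$ with $S=N(y)\setminus\{x\}$. Then $k$ and $c_{xy}=|N(x)\cap N(y)|$ are both $O(c)$, and we get
\[
|N(S)\cap N(x)| > \frac{d(y)-1}{d(y)}\,D - O(c).
\]
So most of $N(x)$ is reached in two steps from $y$ through $S$.

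\textbf{Step 2: a bounded-degree set $T\subseteq N(y)$ with a large common neighbourhood in $N(x)$.} The vertices of $S$ have bounded average degree. If $d(y)$ is bounded by a function $g(s,t)$, then some vertex of $S$ sees $\Omega(D/g)$ vertices of $N(x)$. To reach $s$ vertices, iterate the argument, choosing sets $S\subseteq N(y)$ of various sizes, or use $\delta\ge s$ to grow the structure. The goal is to find, for many neighbours $y$ of $x$, a set of size $s-1$ near $y$ whose members, together with $x$, have many common neighbours.

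\textbf{Step 3: assembling the minor.} Each such $y$ gives a subset $A_y\subseteq N(x)$ of size $m\ge D/g$, namely the neighbours of $x$ reached from $y$ through short paths that avoid $x$. There are at least $D/2$ choices of $y$ and all $A_y$ lie in a ground set of size $N=D$. Take $r$ large. Lemma~\ref{lem:common-intersection} applies whenever $D/2\ge rD/m$, i.e.\ $m\ge 2r$, and gives $r$ indices $y_1,\dots,y_r$ whose sets share at least $D(1/(eg))^r$ common elements. This common intersection is linear in $D$, so for large $D$ it exceeds $t$ plus the number of vertices used by the connecting structure.

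Now take $x$ as one branch set. Take $s-1$ further branch sets, each a star centred at some $y_i$ together with its connecting vertices, grouping the $y_i$ so that the sets are disjoint. Take $t$ common targets in $N(x)$ as the singleton branch sets on the other side. This yields a $K_{s,t}$ minor, which is the desired contradiction. Disjointness is arranged greedily, since each branch set uses only $O(1)$ vertices while the pools are linear in $D$.

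\textbf{Main obstacle.} Step 1 only controls degrees of $y$ that are comparable to $D$. The difficulty is proving that $d(y)$ is bounded independently of $D$, or otherwise handling neighbours $y$ of large degree. I would first try to derive a contradiction from the reverse application of Lemma~\ref{lem:curvature-expansion}, swapping the roles of $x$ and $y$ with $S=N(x)\setminus\{y\}$. If that fails, I would pick $x$ of maximum degree among a suitable local region, or use induction on the degree scale, choosing $D$ so large that there is a gap: no degrees in $[g,\,D']$. This gap would force every neighbour of a high-degree vertex to have either bounded degree or comparable high degree, and the high-degree case is handled by building the minor directly from two high-degree vertices with large common reach.
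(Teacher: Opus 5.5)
For $s=2$ your Step~1 is already enough. With $S=N(y)\setminus\{x\}$ you have $k=c_{xy}\le 2c$ and $\sigma/d(y)=(d(y)-1)/d(y)\ge 1/2$. So $\{x\}$, the star $\{y\}\cup S$, and any $t$ vertices of $(N(S)\cap N(x))\setminus(S\cup\{y\})$ give a $K_{2,t}$ minor. This is slightly simpler than the paper's $s=2$ case, which uses $N(y)\setminus N[x]$ and therefore needs $d(y)$ large.

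For $s\ge 3$, however, there is a genuine gap. Steps~2--3 need at least $D/2$ neighbours $y$ with $d(y)\le g(s,t)$, and nothing in the proposal provides them. In fact only boundedly many neighbours of $x$ can have degree at most $K$ (Proposition~\ref{prop:low-degree}). For $d(u)\le K$, Lemma~\ref{lem:curvature-expansion} with $S=\{z\}$, $z\in N(u)\setminus\{x\}$, forces $|N(z)\cap N(x)|>D/(2K)$. Lemma~\ref{lem:common-intersection} then bounds the number of such $z$. Finally, $\delta\ge s$ plus pigeonhole over $(s-1)$-subsets bounds the number of such $u$ by a constant. So what you call the main obstacle is really the main case, and it is what makes the proof work. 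If $d(u)$ is large while $c_u\le C$, then $\mathrm{Ext}_x(u):=N(u)\setminus N[x]$ is almost all of $N(u)$. The curvature lemma then says that the star $\{u\}\cup\mathrm{Ext}_x(u)$, which meets $N[x]$ only in $u$, has neighbours in all but a small fraction of $N(x)$.

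The second missing piece is disjointness. Your claim that it ``is arranged greedily, since each branch set uses only $O(1)$ vertices while the pools are linear in $D$'' does not hold. The targets come from a pool of linear size, but the connectors do not. For a target $v$ and a centre $y_i$, the usable connectors are the vertices of $N(v)\cap S_{y_i}$, where $S_{y_i}=N(y_i)\setminus\{x\}$. These may be one and the same vertex for every $i$, e.g.\ a hub $z$ adjacent to all the $y_i$. Nor can several $y_i$ be merged into one branch set, since they are connected only through $x$.

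Controlling these overlaps is the heart of the paper's argument. Let $U$ be the set of neighbours with $d(u)>K$ and $c_u\le C$. Either some $s-1$ of them have pairwise $|\mathrm{Ext}_x(u_i)\cap\mathrm{Ext}_x(u_j)|\le \min\{d(u_i),d(u_j)\}/(s(s-2))$, or they do not.
\begin{itemize}
\item In the first case, deleting the overlaps leaves disjoint stars. Each is still adjacent to more than $\bigl(\frac{s-1}{s}-\frac{1}{2s(s-1)}\bigr)D-(C+1)$ vertices of $N(x)$, so their common reach has size at least $D/(2s)-O(1)$. This yields a $K_{s,t}$ minor.
\item In the second case, the complement of the ``large overlap'' graph on $U$ is $K_{s-1}$-free. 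Tur\'an's theorem gives some $u$ with $s(s-1)(s-2)$ partners of larger degree, each meeting $\mathrm{Ext}_x(u)$ in more than $d(u)/(s(s-2))$ vertices. Lemma~\ref{lem:common-intersection} inside $\mathrm{Ext}_x(u)$ then produces $u$ and $s-1$ partners with $t$ common neighbours, i.e.\ a $K_{s,t}$ subgraph. This is where $K$ must be large compared to $C$ and $t$.
\end{itemize}
None of your fallback ideas replaces this. In particular, ``two high-degree vertices with large common reach'' gives only two branch sets on one side, hence at most a $K_{2,t}$ minor.
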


We now explain how to deduce Theorem \ref{thm:kstmain} from Theorem
\ref{thm:maximum-degree} (the argument is similar to the argument used
in the main result of \cite{LuWang2026} on planar graphs).

\medskip

\noindent \textit{Proof of Theorem \ref{thm:kstmain} assuming Theorem
  \ref{thm:maximum-degree}.}
Let $G$ be a connected locally finite $K_{s,t}$-minor-free graphs of minimum degree at least $s$ and
positive LLY curvature. By Theorem~\ref{thm:maximum-degree}, $\Delta(G)\le \Delta_{s,t}$. It can be observed that any graph $H$ of positive
  curvature satisfies $\kappa_{\LLY}(H)\ge
  \tfrac1{\Delta(H)(\Delta(H)-1)}$ (see also \cite[Lemma
  2]{LuWang2026}). Hence, $\kappa_{\LLY}(G)> 1/\Delta_{s,t}^2$. Lin,
  Lu, and Yau \cite{LinLuYau2011} proved that any connected graph $H$ of maximum
  degree at most $\Delta$ with LLY curvature
  $\kappa_{\LLY}(H)\ge \kappa>0$ has order at most
  $\Delta^{1+2/\kappa}$. It follows that $G$ has order at most
  $f(\Delta_{s,t})$, with $f(x)=x^{2x^2+1}$. 

  As observed in \cite{BLS26,Song26}, a significantly more economic bound can be obtained by combining spectral estimates of Lin, Lu, and Yau \cite[Theorem 4.2]{LinLuYau2011} with bounds on normalized eigenvalues of minor-free graphs. For instance, using \cite[Corollary 1.14]{KS26}, one can obtain that $G$ has order a most $f(\Delta_{s,t})$ with $f(x)=O(x^3 (t\log t)^2)$.
  \hfill $\Box$

  \medskip

The proof of Theorem~\ref{thm:maximum-degree} is divided into several steps, each developed in a separate
subsection. We first give the degree estimates needed for the
argument, before combining them in the final subsection to prove
Theorem~\ref{thm:maximum-degree}.

\subsection{Large degrees around a vertex of sufficiently large degree}

In this subsection, we show that a vertex $x$ of sufficiently large degree
in a graph $G\in \mathcal{H}_{s,t}$ must have most of its
neighbours of large degree as well. More precisely, for every fixed
integer $K\geq s$, the number of neighbours of degree at most $K$ of $x$
is bounded by a constant depending only on $s,t$ and $K$,
independently of the degree of the vertex $x$. The following proposition
formalises this fact:
\begin{proposition}
\label{prop:low-degree}
 For every integers $ 2\leq s\leq t$ integers and $ K\geq s$, there
 exists $\Delta_0=\Delta_0(s,t,K)\in\mathbb{N}$ such that, for every graph $G\in\mathcal{H}_{s,t}$ and every vertex $x\in V(G)$ with $d(x)\geq\Delta_0$, we have
$$
\left|\{u\in N(x):d(u)\leq K\}\right|
\leq(t-1)\binom{2(s-1)K}{s-1}.
$$
\end{proposition}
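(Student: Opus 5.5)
The plan is to build a $K_{s,t}$ subgraph, hence a $K_{s,t}$ minor, whose $s$-side consists of $x$ together with $s-1$ ``hub'' vertices, and whose $t$-side consists of $t$ low-degree neighbours of $x$ that share the same $s-1$ hubs. The count $(t-1)\binom{2(s-1)K}{s-1}$ comes from a pigeonhole step: if every low-degree neighbour $u$ of $x$ has at least $s-1$ neighbours in a fixed set $H$ with $|H|\le 2(s-1)K$, then choosing an $(s-1)$-subset $A_u\subseteq N(u)\cap H$ for each such $u$, some $A$ is chosen by $t$ distinct $u$'s as soon as there are more than $(t-1)\binom{2(s-1)K}{s-1}$ of them. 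The sets $\{x\}\cup A$ and these $t$ vertices then span a $K_{s,t}$ subgraph, since $x$ is adjacent to every $u$. This contradicts $G\in\mathcal{H}_{s,t}$. For the two sides to be disjoint I would discard $u$'s lying in $H$, which costs only $|H|$, a constant absorbed once $\Delta_0$ is large.

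The set $H$ will be the \emph{heavy} vertices: those $w\ne x$ with $|N(w)\cap N(x)|\ge \lambda d(x)$, for a threshold $\lambda=\lambda(K,s)$. There are two steps.

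First, I show every $u\in N(x)$ with $s\le d(u)\le K$ has at least $s-1$ heavy neighbours. Suppose $u$ has at most $s-2$ heavy neighbours other than $x$. I apply Lemma~\ref{lem:curvature-expansion} to the edge with $y=u$ and $S$ the set of non-heavy neighbours of $u$ other than $x$. Then $|S|\ge d(u)-1-(s-2)\ge 1$, $d(y)\le K$, and $k+1+c\le 2K$. The lemma gives $|N(S)\cap N(x)|>\frac{|S|}{K}d(x)-2K$. On the other hand, $|N(S)\cap N(x)|<|S|\lambda d(x)$. With $\lambda<1/K$, these two bounds contradict each other for $d(x)$ large.

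Second, I bound $|H|$ by Lemma~\ref{lem:common-intersection}, applied with ground set $N(x)$ (so $N=d(x)$), the sets $S_w=N(w)\cap N(x)$ for $w\in H$, $m=\lambda d(x)$, and $r=s-1$. If $|H|\ge (s-1)/\lambda$, some $s-1$ heavy vertices have at least $d(x)(\lambda/e)^{s-1}$ common neighbours in $N(x)$. This is at least $t+s$ once $d(x)$ is large, so together with $x$ they give a $K_{s,t}$ subgraph after removing the hubs themselves from the common neighbourhood. Hence $|H|<(s-1)/\lambda$.

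The main obstacle is matching the constants. With $\lambda$ just below $1/K$, the first step works but the second only gives $|H|$ of order $(s-1)K$. That is the right order, but achieving exactly $2(s-1)K$ forces $\lambda=1/(2K)$. With that choice the crude version of the first step breaks down when $|S|$ is small, since $|S|d(x)/K$ versus $|S|d(x)/(2K)$ leaves no room when the error term is accounted for loosely. I expect to resolve this by using the sharper form of Lemma~\ref{lem:curvature-expansion}, which contains the extra term $|N(S)\cap N(x)\cap N(y)|$, and by counting more carefully: non-heavy vertices contribute strictly less than $\frac{1}{2K}d(x)$ each, while the lemma's main term is $|S|d(x)/d(u)$ with $d(u)\le K$, leaving a factor of $2$ of slack for $d(x)\ge\Delta_0$. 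If this refinement fails, I would fall back to a version with a slightly different universe size, such as $H$ of size $O(sK)$. This still proves the qualitative statement needed for Theorem~\ref{thm:maximum-degree}, with the binomial coefficient adjusted accordingly.
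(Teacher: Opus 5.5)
Your argument is correct and is essentially the paper's proof. Both use heavy vertices with threshold $d(x)/(2K)$, then Lemma~\ref{lem:common-intersection} with $m=d(x)/(2K)$ and $r=s-1$ to show there are fewer than $2(s-1)K$ of them, then pigeonhole over $(s-1)$-subsets. The only difference is that the paper's universe is $Z=\bigcup_u (N(u)\setminus\{x\})$, over neighbours $u$ of $x$ with $d(u)\le K$. It shows each $z\in Z$ is heavy by applying Lemma~\ref{lem:curvature-expansion} to $xu$ with the singleton $S=\{z\}$, which gives $|N(z)\cap N(x)|>d(x)/K-(K+1)>d(x)/(2K)$ for $d(x)\ge 2K(K+1)$.

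The obstacle in your last paragraph does not exist, so you need neither the refinement nor the fallback. With $\lambda=1/(2K)$, your own crude bounds give
\[
\frac{|S|}{K}\,d(x)-2K\;<\;|N(S)\cap N(x)|\;<\;\frac{|S|}{2K}\,d(x),
\]
that is, $|S|\,d(x)<4K^2$. This is impossible once $d(x)\ge 4K^2$, since $|S|\ge 1$: the gap between the main terms grows linearly in $d(x)$, while the error term stays below $2K$.

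Because the contradiction only needs $S\neq\varnothing$, this step actually shows that \emph{every} neighbour of $u$ other than $x$ is heavy, which is the paper's $Z\subseteq H$. The exact constant also does not force $\lambda=1/(2K)$. Any $\lambda\in[1/(2K),1/K)$ gives $|H|<(s-1)/\lambda\le 2(s-1)K$, and a smaller universe only improves the bound, since $\binom{n}{s-1}$ is increasing in $n$. With $\lambda=1/(2K)$ and $\Delta_0\ge\max\{4K^2,\,t(2eK)^{s-1}\}$, you get exactly $(t-1)\binom{2(s-1)K}{s-1}$.

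Drop the step that discards the vertices $u\in H$. It is unnecessary: $A_u\subseteq N(u)$ forces $u\notin A_u$, and $x\notin H$ by definition. Your justification for it is also wrong. The quantity being bounded is an absolute constant, independent of $d(x)$, so an additive $|H|$ cannot be absorbed by enlarging $\Delta_0$. For $s=t=2$ your bound would become $2|H|$, which can be as large as $4K-2>2K$. Similarly, you need not remove the hubs from their common neighbourhood, since a common neighbour of the hubs is never one of them.
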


\begin{proof}
Let $2\leq s\leq t$ and $K\geq s$ be fixed integers. Let $\Delta_0
=\max\{(2K(K+1), \, t(2eK)^{s-1}\}$. Let $G\in \mathcal{H}_{s,t}$ and
let $x$ be a vertex of $G$ with $d(x):=D \geq \Delta_0$. We set
$$
U_K:=\{u\in N(x):d(u)\leq K\},
\qquad
Z:=\bigcup_{u\in U_K}\bigl(N(u)\setminus\{x\}\bigr).
$$ 

Fix $z\in Z$. By definition, $z\neq x$, and there exists $u\in U_K$ such that $z\sim u$. We may apply Lemma~\ref{lem:curvature-expansion} to the edge $xu$ with $S=\{z\}$. Since $|S\cap N(x)|\leq1$ and $|N(x)\cap N(u)|\leq d(u)-1\leq K-1$, the lemma gives
\begin{align*}
|N(z)\cap N(x)|
&>\frac{D}{d(u)}-\bigl(|S\cap N(x)|+1+|N(x)\cap N(u)|\bigr)\\
&\geq\frac{D}{d(u)}-(K+1)\\
&\geq\frac{D}{K}-(K+1).
\end{align*}
Thus, as $D\geq \Delta_0 \geq 2K(K+1)$, we have
\begin{equation}
\label{eq:z-large-neighbourhood}
|N(z)\cap N(x)|>\frac{D}{2K}.
\end{equation}
For each $z\in Z$, set $S_z:=N(z)\cap N(x)$.
We first claim that $|Z|<2(s-1)K$. Suppose, for contradiction, that $|Z|\geq2(s-1)K$. We claim that  this allows us to find distinct vertices $z_1,\ldots,z_{s-1}\in Z$ such that $|S_{z_1}\cap\cdots\cap S_{z_{s-1}}|\geq t$, and hence to construct a $K_{s,t}$ subgraph in $G$, which gives a contradiction.
To see this, note that the family $(S_z)_{z\in Z}$ consists of subsets of the $D$-element set $N(x)$, each of cardinality greater than $D/(2K)$. Since
$$
2(s-1)K=\frac{(s-1)D}{D/(2K)},
$$
Lemma~\ref{lem:common-intersection} implies that there exist distinct vertices $z_1,\ldots,z_{s-1}\in Z$ such that
$$
|S_{z_1}\cap\cdots\cap S_{z_{s-1}}|
\geq D\left(\frac{1}{2eK}\right)^{s-1}.
$$
As $D\geq t(2eK)^{s-1}$, this intersection contains at least $t$ vertices. Choose distinct vertices
$$
v_1,\ldots,v_t\in S_{z_1}\cap\cdots\cap S_{z_{s-1}}.
$$
Each $v_j$ is adjacent to $x$ and to every $z_i$. Moreover, the sets $\{x,z_1,\ldots,z_{s-1}\}$ and $\{v_1,\ldots,v_t\}$ are disjoint: $x\notin N(x)$, while $z_i=v_j$ would imply $z_i\in S_{z_i}\subseteq N(z_i)$, which cannot happen in a simple graph. These vertices therefore give a copy of $K_{s,t}$ as a subgraph, contradicting $K_{s,t}\not\preccurlyeq G$. Hence
\begin{equation}
\label{eq:Z-bound}
|Z|<2(s-1)K.
\end{equation}
It remains to bound the cardinality of the set $U_K.$
Since $\delta(G)\geq s$, every $u\in U_K$ has degree at least $s$, and therefore at least $s-1$ neighbours other than $x$. For each $u\in U_K$, choose a set $R_u\subseteq N(u)\setminus\{x\}$ with $|R_u|=s-1$. By definition of $Z$, we have $R_u\subseteq Z$.

We claim that, for every subset $R $ of $Z$, of cardinality $s-1$, there are at most $t-1$ vertices $u\in U_K$ such that $R_u=R$. 
Otherwise, there would exist distinct $u_1,\ldots,u_t\in U_K$ with $R_{u_1}=\cdots=R_{u_t}=R$ and this will give rise to a copy of $K_{s,t}$ as a subgraph. Indeed, every $u_j$ is adjacent to $x$ and to every vertex of $R$. Moreover, $x\notin R$ and $u_j\notin R$ for every $j$, since $R=R_{u_j}\subseteq N(u_j)\setminus\{x\}$ and $G$ is simple. Thus the sets $\{x\}\cup R$ and $\{u_1,\ldots,u_t\}$ would give a copy of $K_{s,t}$ as a subgraph, again a contradiction.

Finally,
$$
U_K=\bigcup_{R\in\binom{Z}{s-1}}\{u\in U_K:R_u=R\},
$$
and therefore
\begin{align*}
|U_K|
&\leq\sum_{R\in\binom{Z}{s-1}}\left|\{u\in U_K:R_u=R\}\right|\\
&\leq(t-1)\binom{|Z|}{s-1}\\
&\leq(t-1)\binom{2(s-1)K}{s-1},
\end{align*}
where the last inequality follows from \eqref{eq:Z-bound}.
\end{proof}

As the bound in the previous proposition does not depend on $d(x)$,
the proportion of neighbors of $x$ of degree at most $K$ tends to $0$
as $d(x)$ tends to infinity. We therefore obtain the following
immediate corollary:

\begin{corollary}
\label{cor:most-neighbours-large}
Let $2\leq s\leq t$ be integers, let $K\geq s$ be an integer, and let
$\varepsilon\in(0,1)$. There exists
$\Delta_1=\Delta_1(s,t,K,\varepsilon)\in\mathbb{N}$ such that, for
every graph $G\in \mathcal{H}_{s,t}$, and
every vertex $x$ with $d(x)\ge \Delta_1$, we have
$$
\left|\{u\in N(x):d(u)>K\}\right|\geq(1-\varepsilon)d(x).
$$
\end{corollary}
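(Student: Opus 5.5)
The corollary follows directly from Proposition~\ref{prop:low-degree}, since the bound there is independent of $d(x)$. Fix $2\le s\le t$, $K\ge s$ and $\varepsilon\in(0,1)$, and set
\[
C:=(t-1)\binom{2(s-1)K}{s-1},
\]
the constant from Proposition~\ref{prop:low-degree}. It depends only on $s$, $t$ and $K$. We will take
\[
\Delta_1(s,t,K,\varepsilon):=\max\bigl\{\Delta_0(s,t,K),\ \lceil C/\varepsilon\rceil\bigr\},
\]
with $\Delta_0$ the threshold provided by Proposition~\ref{prop:low-degree}.

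Let $G\in\mathcal{H}_{s,t}$ and let $x\in V(G)$ with $d(x)\ge\Delta_1$. Local finiteness makes $d(x)$ a finite integer, so counting is legitimate. Since $d(x)\ge\Delta_0$, Proposition~\ref{prop:low-degree} applies and gives
\[
\bigl|\{u\in N(x):d(u)\le K\}\bigr|\le C.
\]
The neighbourhood $N(x)$ is the disjoint union of the neighbours of degree at most $K$ and those of degree greater than $K$. Hence
\[
\bigl|\{u\in N(x):d(u)>K\}\bigr|\ \ge\ d(x)-C.
\]

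It remains to check that $C\le\varepsilon\, d(x)$. This holds because $d(x)\ge\Delta_1\ge C/\varepsilon$. Therefore $d(x)-C\ge(1-\varepsilon)d(x)$, which is the claimed bound.

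No step here is a genuine obstacle. The only point needing care is that $\Delta_1$ must be at least both $\Delta_0$ and $C/\varepsilon$: the first makes Proposition~\ref{prop:low-degree} applicable, and the second turns the absolute bound $C$ into a proportional one. Taking the maximum of the two handles both at once.
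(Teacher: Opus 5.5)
Your proof is correct and matches the paper's intended argument. Proposition~\ref{prop:low-degree} bounds the number of neighbours of degree at most $K$ by the constant $(t-1)\binom{2(s-1)K}{s-1}$, which does not depend on $d(x)$, and taking $\Delta_1$ to be the maximum of $\Delta_0$ and this constant divided by $\varepsilon$ turns that absolute bound into the required proportional one, exactly as in the paper.
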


%\begin{proof}Let $2\leq s\leq t$ and $K\geq s$ be fixed integers and let $\varepsilon \in (0,1)$. 
%By Proposition~\ref{prop:low-degree}, if $\Delta \geq \Delta_0$, then:
%$$
%\left|\{u\in N(x):d(u)\leq K\}\right|
%\leq(t-1)\binom{2(s-1)K}{s-1}.
%$$
%The right-hand side depends only on $s,t$, and $K$. Taking $\Delta\geq \max\bigg(\Delta_0,\frac{(t-1)}{\varepsilon }\binom{2(s-1)K}{s-1} \bigg)=:\Delta_1$, gives the conclusion.
%\end{proof}

\subsection{Bounded internal degree under bounded maximum average degree}

We next observe a consequence of having a bounded maximum average degree: for any vertex $x$, only a small proportion of the neighbors of $x$ can have many neighbors within $N(x)$. We record this fact in the following proposition:

\begin{proposition}
\label{prop:internal-degree}
Let $d>0$, let $G$ be a graph with $\mad(G)\leq d$, and let $x\in V(G)$. For each $u\in N(x)$, set $c_u:=|N(u)\cap N(x)|$. Then, for every $C>0$,
$$
\left|\{u\in N(x):c_u>C\}\right|\leq\frac{d\cdot (d(x)+1)}{C}.
$$
\end{proposition}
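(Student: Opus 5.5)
Since $G$ is locally finite, $N(x)$ is a finite set, so we may work in the finite subgraph $H:=G[N[x]]$, the subgraph induced by the closed neighbourhood of $x$. It has $d(x)+1$ vertices. Because $\mad(G)\leq d$ and $H$ is a finite subgraph of $G$,
$$
2|E(H)|\leq d\,(d(x)+1).
$$

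We then double count the edges of $H$ through the internal degrees. For each $u\in N(x)$, the quantity $c_u=|N(u)\cap N(x)|$ is exactly the number of neighbours of $u$ inside $G[N(x)]$. Summing over $u\in N(x)$ counts every edge of $G[N(x)]$ twice, so
$$
\sum_{u\in N(x)} c_u = 2|E(G[N(x)])|\leq 2|E(H)|\leq d\,(d(x)+1).
$$
(One could even subtract the $d(x)$ edges from $x$ to its neighbours, but this is not needed.)

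Finally, we apply a Markov-type counting step. Let $B:=\{u\in N(x): c_u>C\}$. Since all $c_u\geq 0$, we get $C\,|B|\leq\sum_{u\in B}c_u\leq\sum_{u\in N(x)}c_u\leq d\,(d(x)+1)$, and hence $|B|\leq d(d(x)+1)/C$, as claimed.

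There is no real obstacle here. The only point needing care is that $\mad$ is defined as a supremum over finite subgraphs, so we must apply it to the finite subgraph $H$; local finiteness is exactly what makes $H$ finite.
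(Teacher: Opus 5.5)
Your proof is correct and follows the paper's argument: apply $\mad(G)\le d$ to the finite subgraph $G[N[x]]$, identify $\sum_{u\in N(x)}c_u$ with $2|E(G[N(x)])|$, and conclude with the Markov-type count. The paper gets the strict inequality $C|X_C|<d(d(x)+1)$ when the set is nonempty, but your non-strict version already gives exactly the stated bound.
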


\begin{proof}
 Let $x\in V(G)$ of degree $D:=d(x)$ and consider the induced subgraph $H:=G[N[x]]$. Since $H$ has $D+1$ vertices and $\mad(G)\leq d$, we have $2|E(H)|\leq d(D+1)$. Observe that the edges of $H$ consist of the $D$ edges joining $x$ to its neighbours together with the edges of $G[N(x)]$, between the neighbors of $x$. Hence
$$
2|E(G[N(x)])|\leq d(D+1)-2\Delta\leq d(D+1).
$$
Moreover, for each $u\in N(x)$, the quantity $c_u$ is precisely the degree of $u$ in $G[N(x)]$. Thus,
$$
\sum_{u\in N(x)}c_u=2|E(G[N(x)])|\leq d(D+1).
$$
Now let $X_C:=\{u\in N(x):c_u>C\}$. As $c_u>C$ for every $u\in X_C$,
$$
\sum_{u\in X_C}c_u>C\sum_{u\in X_C}1=C|X_C|.
$$
On the other hand, $\sum_{u\in X_C}c_u\leq\sum_{u\in N(x)}c_u\leq d(D+1)$. Therefore $C|X_C|<d(D+1)$, which proves the claim.
\end{proof}

By choosing $C$ to be sufficiently large, the proportion of neighbours of $x$ having more than $C$ common neighbors with $x$ can be made arbitrarily small. We therefore obtain the following corollary:

\begin{corollary}
\label{cor:most-neighbours-bounded-internal-degree}
Let $d>0$ and $\varepsilon\in(0,1)$. There exists
$C=C(d,\varepsilon)>0$ such that, for every graph $G$ with
$\mad(G)\leq d$ and every vertex $x$,% of degree $\Delta$,
$$
\left|\left\{u\in N(x):|N(u)\cap N(x)|\leq C\right\}\right|\geq(1-\varepsilon)d(x).
$$
\end{corollary}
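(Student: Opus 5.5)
The plan is to derive the statement directly from Proposition~\ref{prop:internal-degree}, by choosing the threshold $C$ so that the upper bound it gives on the number of neighbours with large internal degree is at most $\varepsilon d(x)$.

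Fix $d>0$ and $\varepsilon\in(0,1)$, and set $C:=2d/\varepsilon$. This depends only on $d$ and $\varepsilon$. Let $G$ satisfy $\mad(G)\le d$ and let $x$ be a vertex of $G$, with $D:=d(x)$.

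\textbf{Trivial case $D=0$.} Both sides of the desired inequality are $0$, so there is nothing to prove.

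\textbf{Case $D\ge 1$.} Apply Proposition~\ref{prop:internal-degree} with this $C$. It gives
$$
\left|\{u\in N(x):|N(u)\cap N(x)|>C\}\right|\le \frac{d(D+1)}{C}=\frac{\varepsilon(D+1)}{2}\le \varepsilon D,
$$
where the last step uses $D+1\le 2D$ for $D\ge 1$. Taking the complement inside $N(x)$, at least $(1-\varepsilon)D$ neighbours $u$ of $x$ satisfy $|N(u)\cap N(x)|\le C$, which is exactly the claim.

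The only point requiring care is the additive $+1$ in the bound $d(D+1)/C$. Absorbing it is why $C$ is doubled relative to the naive choice $d/\varepsilon$, and why the case $D=0$ is handled separately. This is the main (minor) obstacle; otherwise the argument is routine.
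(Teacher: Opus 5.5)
Your proof is correct and matches the paper's own argument: the same choice $C=2d/\varepsilon$, the trivial case $d(x)=0$, and the estimate $d(d(x)+1)/C=\varepsilon(d(x)+1)/2\le\varepsilon d(x)$ from Proposition~\ref{prop:internal-degree}, followed by taking the complement in $N(x)$.
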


%\begin{proof}
%If $\Delta=0$, there is nothing to prove. Assume $\Delta\geq1$ and take $C:=2d/\varepsilon$. Proposition~\ref{prop:internal-degree} implies that
%$$
%\left|\{u\in N(x):c_u>C\}\right|\leq\frac{d(\Delta+1)}{C}\leq\frac{2d\Delta}{C}=\varepsilon \Delta.
%$$
%The result follows.
%\end{proof}

\subsection{Proof of Theorem \ref{thm:maximum-degree} }\label{sec:proof-main}

We first outline the proof: We shall proceed by contradiction, constructing in each case a (forbidden) $K_{s,t}$ minor. We suppose that $G$ contains a vertex
$x$ of sufficiently large degree. The preceding corollaries provide many neighbours $u$ of $x$ with large degree but only
boundedly many common neighbours with $x$. Consequently, their
external neighbourhoods $\E(u):=N(u)\setminus N[x]$ are large.
The case $s=2$ follows directly from Lemma \ref{lem:common-intersection}. For $s\geq3$, we distinguish two cases according to the
intersections of these external neighbourhoods.

If there are $s-1$ external neighbourhoods with sufficiently
small pairwise intersections, we remove their overlaps to obtain
pairwise disjoint sets, each retaining a sufficiently large
proportion of its original neighbourhood. Positive curvature
then ensures that many vertices of $N(x)$ have a neighbour in
each of these sets. This allows us to construct a $K_{s,t}$ minor
by contraction.

Otherwise, we use an auxiliary graph and Tur\'an's theorem
to find one external neighbourhood having substantial overlap
with many others. The common-intersection lemma implicit in the K\H{o}v\'ari--S\'os--Tur\'an theorem then yields
$s$ vertices with at least $t$ common neighbours, giving a
$K_{s,t}$ subgraph.

In either case, we contradict the exclusion of a $K_{s,t}$ minor.
This establishes the required degree bound.
We shall now proceed with the proof of Theorem \ref{thm:maximum-degree}.

\medskip

\noindent \textit{Proof of Theorem \ref{thm:maximum-degree}.} Let
$2\leq s\leq t$ be fixed integers and suppose for contradiction that for arbitrarily large integer
$D$, there exists a graph $G\in \mathcal{H}_{s,t}$ and a vertex $x\in
V(G)$ with $d(x)=\Delta$. Fix such a graph $G$ and vertex $x$. 
For every $u\in N(x)$, set $c_u:=|N(u)\cap N(x)|$ and $\E(u)
:=N(u)\setminus N[x]$. Thus $|\E(u)|=d(u)-1-c_u$.

\smallskip

By Theorem~\ref{thm:mader}, there exists a constant $d=d_{s,t}$
depending only on $s$ and $t$ such that  $\mad(G)\leq d$. All
constants chosen below therefore depend only on $s$ and $t$. Since $D$
may be taken arbitrarily large, we take it sufficiently large to
satisfy all the conditions below.

\medskip

%\noindent \textbf{The case $s = 2$.}

%\smallskip

\noindent \textbf{Assume first that $s=2$.}
By Corollaries \ref{cor:most-neighbours-large}
and~\ref{cor:most-neighbours-bounded-internal-degree} we may choose
$C$ and $K$ (depending only on $s$ and $t$), such
that for sufficiently large $D$,
%Set $C:=8d$. By Proposition~\ref{prop:internal-degree},
%\begin{align*}
%\left|\{u\in N(x):c_u>C\}\right|
%&\leq\frac{d\cdot(D+1)}{C}
%\leq\frac{D}{4}.
%\end{align*}
%Choose an integer $K\geq2$ sufficiently large that $K+1\geq2(C+1)$. Once $K$ is fixed, Corollary~\ref{cor:most-neighbours-large} with $\varepsilon=1/4$ gives, for all sufficiently large %$D$,
%\begin{align*}
%\left|\{u\in N(x):d(u)\leq K\}\right|
%\leq\frac{D}{4}.
%\end{align*}
\[U:=\{u\in N(x):d(u)>K\text{ and }c_u\leq C\}\] satisfies $|U|\geq
D/2$. In particular, $U\neq\varnothing$ for $D\ge 1$.

\medskip
Choose any vertex $u\in U$ and apply Lemma~\ref{lem:curvature-expansion} to the edge $xu$ with $S=\E(u)$. Since $\E(u)\cap N(x)=\varnothing$ and $|N(x)\cap N(u)|=c_u$, we obtain
\begin{align*}
|N(\E(u))\cap N(x)|
&>\frac{|\E(u)|}{d(u)}D-(1+c_u)\\
&=\left(1-\frac{1+c_u}{d(u)}\right)D-(1+c_u)\\
&\geq\left(1-\frac{C+1}{K+1}\right)D-(C+1)\\
&\geq\frac{D}{2}-(C+1).
\end{align*}
For $D$ sufficiently large, the last quantity is greater than $t+1$. We may therefore choose distinct vertices
\[
v_1,\ldots,v_t\in
\bigl(N(\E(u))\cap N(x)\bigr)\setminus\{u\}.
\]

Let $B:=\{u\}\cup \E(u)$. The set $B$ is connected because every vertex of $\E(u)$ is adjacent to $u$. Moreover, the sets
\[
\{x\},\quad B,\quad\{v_1\},\ldots,\{v_t\}
\]
are pairwise disjoint, since $\E(u)\cap N[x]=\varnothing$ and the
vertices $v_1,\ldots,v_t$ are distinct from $u$. Every $v_j$ is
adjacent to $x$, and since $v_j\in N(\E(u))$, it has a neighbour in
$E_u\subseteq B$. Hence $\{x\},B$ and $\{v_1\},\ldots,\{v_t\}$ form
the branch sets of a $K_{2,t}$ minor, contradicting
$K_{2,t}\not\preccurlyeq G$. Thus the theorem holds when $s=2$.

\medskip

\noindent \textbf{Assume now that $s\geq3$}.
By Corollaries \ref{cor:most-neighbours-large}
and~\ref{cor:most-neighbours-bounded-internal-degree} we may choose again
$C$ and $K$  (depending only on $s$ and $t$), such
that for sufficiently large $D$,
\[U:=\{u\in N(x):d(u)>K\text{ and }c_u\leq C\}\] satisfies $|U|\geq
D/2$. Here, after the
choice of $C$,  we choose
$K$ with the following two additional properties:
\begin{equation}\label{eq:bndKC1}
  (C+1)/(K+1)\leq1/(2s(s-1))
\end{equation}
\begin{equation}\label{eq:bndKC2}
(K-C)\left(\frac{1}{es(s-2)}\right)^{s-1}\geq t
\end{equation}

We now distinguish two cases according to the overlap of the external neighbourhoods $\E(u)$, $u\in U$.

\medskip

\noindent\textbf{Case 1: almost-disjoint external neighbourhoods.}

\smallskip

\noindent Suppose that there exist distinct vertices $u_1,\ldots,u_{s-1}\in U$ such that
\begin{equation}
\label{eq:small-pairwise-intersections}
|\E(u_i)\cap \E(u_j)|
\leq\frac{1}{s(s-2)}\min\{d(u_i),d(u_j)\}
\end{equation}
for every distinct $i,j\in\{1,\ldots,s-1\}$.

\smallskip

For each $i\in\{1,\ldots,s-1\}$, define
\[
S_i:=\E(u_i)\setminus\bigcup_{j\neq i}\E(u_j).
\]
The sets $S_1,\ldots,S_{s-1}$ are pairwise disjoint. Moreover,
\begin{align*}
|S_i|
&\geq|\E (u_i)|-\sum_{j\neq i}|\E(u_i)\cap \E(u_j)|\\
&\geq d(u_i)-1-c_{u_i}-\frac{s-2}{s(s-2)}\cdot d(u_i)\\
&\geq\frac{s-1}{s}\cdot d(u_i)-(C+1).
\end{align*}

Apply Lemma~\ref{lem:curvature-expansion} to the edge $xu_i$ with $S=S_i$. Since $S_i\subseteq \E(u_i)$, we have $S_i\cap N(x)=\varnothing$, while $|N(x)\cap N(u_i)|=c_{u_i}$. Hence
\begin{align*}
|N(S_i)\cap N(x)|
&>\frac{|S_i|}{d(u_i)}\cdot D-(1+c_{u_i})\\
&\geq\left(\frac{s-1}{s}-\frac{C+1}{d(u_i)}\right)D-(C+1)\\
&\geq\left(\frac{s-1}{s}-\frac{C+1}{K+1}\right)D-(C+1).
\end{align*}
By \eqref{eq:bndKC1}, $(C+1)/(K+1)\leq1/(2s(s-1))$, and therefore
\begin{equation}
\label{eq:expanded-Si}
|N(S_i)\cap N(x)|>
\left(\frac{s-1}{s}-\frac{1}{2s(s-1)}\right)D-(C+1).
\end{equation}

Since all the sets $N(S_i)\cap N(x)$ are contained in the $D$-element set $N(x)$,
\[
\left|\bigcap_{i=1}^{s-1}\bigl(N(S_i)\cap N(x)\bigr)\right|
\geq\sum_{i=1}^{s-1}|N(S_i)\cap N(x)|-(s-2)D.
\]
Using \eqref{eq:expanded-Si}, we obtain
\begin{align*}
\left|\bigcap_{i=1}^{s-1}\bigl(N(S_i)\cap N(x)\bigr)\right|
&>(s-1)\left(\frac{s-1}{s}-\frac{1}{2s(s-1)}\right)D\\
&\qquad-(s-1)(C+1)-(s-2)D\\
&=\frac{D}{2s}-(s-1)(C+1).
\end{align*}
For $D$ sufficiently large, the last quantity is greater than $t+s-1$. We may therefore choose distinct vertices
\[
v_1,\ldots,v_t\in
\bigcap_{i=1}^{s-1}\bigl(N(S_i)\cap N(x)\bigr)
\setminus\{u_1,\ldots,u_{s-1}\}.
\]

For each $i$, let $B_i:=\{u_i\}\cup S_i$. Each $B_i$ is connected because every vertex of $S_i$ is adjacent to $u_i$. The sets
\[
\{x\},\quad B_1,\ldots,B_{s-1},\quad\{v_1\},\ldots,\{v_t\}
\]
are pairwise disjoint. Indeed, the sets $S_i$ are pairwise disjoint and lie outside $N[x]$, and all the chosen vertices in $N(x)$ are distinct. Every $v_j$ is adjacent to $x$, and $v_j\in N(S_i)$ for every $i$, so $v_j$ has a neighbour in $S_i\subseteq B_i$. Hence $\{x\},B_1,\ldots,B_{s-1}$ and $\{v_1\},\ldots,\{v_t\}$ form the branch sets of a $K_{s,t}$ minor, contradicting $K_{s,t}\not\preccurlyeq G$.

\medskip

\noindent\textbf{Case 2: substantial overlap.}

\smallskip

We may therefore assume that for any collection of $s-1$ vertices $u_1,\cdots,u_{s-1} \in U$, there exist $i\neq j$ such that \begin{equation}
\label{big intersections}
|\E(u_i)\cap \E(u_j)|
>\frac{1}{s(s-2)}\min\{d(u_i),d(u_j)\}.
\end{equation}
This allows us to define an auxiliary graph $H$ with vertex set $U$ and in which distinct vertices $u,v\in U$
are adjacent if and only if 
\[
|\E (u)\cap \E (v)|>\frac{1}{s(s-2)}\min\{d(u),d(v)\}.
\]
Because any collection of $s-1$ vertices of $H$ must satisfy property
\eqref{big intersections}, we deduce that the complement graph
$\overline{H}$ contains no complete graph $K_{s-1}$ as a subgraph.

\smallskip

By Theorem \ref{thm:turan}, we have the following bound on the
cardinality on the number of edges of  $\overline{H}$: 
\[ \vert E(\overline{H }) \vert \leq \left( 1-\frac{1}{s-2}\right) \frac{\vert U\vert ^2}{2}\] and therefore, \[\vert E(H)\vert = \binom{\vert U \vert }{2}-\vert E(\overline{H})\vert \geq \frac{\vert U\vert (\vert U \vert -1)}{2}-\frac{\vert U\vert^2}{2}\left(1-\frac{1}{s-2}\right)\geq  \vert U \vert \left(\frac{\vert U \vert}{2(s-2)}-\frac{1}{2}\right) \] Rearranging, we obtain
\[
\frac{|E(H)|}{|U|}
\geq \frac{1}{2}\left(\frac{|U|}{s-2}-1\right).
\]

We now orient each edge of $H$ from its endpoint of smaller degree in $G$
towards its endpoint of larger degree, breaking ties arbitrarily.
Since $ \sum_{u\in U}d_H^+ (u)=|E(H)|$, we have \[ \frac{\sum_{u\in U}d_H^+(u)}{\vert U \vert}=\frac{|E(H)|}{|U|}
\geq \frac{1}{2}\left(\frac{|U|}{s-2}-1\right).\] Therefore, there exists $u\in U$
such that
\[
d_H^+(u)
\geq \frac{|E(H)|}{|U|}
\geq \frac{|U|}{2(s-2)}-\frac12
\geq \frac{D}{4(s-2)}-\frac12,
\]
where the last inequality follows from $|U|\geq D/2$.

\medskip

Set $q:=s(s-1)(s-2)$. For $D$ sufficiently large, the last
quantity is at least $q$. We may therefore choose distinct
outneighbours $v_1,\ldots,v_q$ of $u$ in the oriented graph $H$.
By construction, for every $i\in\{1,\ldots,q\}$, we have
$d(u)\leq d(v_i)$ and
\[
|\E(u)\cap \E(v_i)|
>\frac{1}{s(s-2)}\min\{d(u),d(v_i)\}
=\frac{d(u)}{s(s-2)}.
\]

Define $F_i:=\E(u)\cap \E(v_i)\subseteq \E(u)$ for every
$i\in\{1,\ldots,q\}$, and set
\[
r:=s-1,\qquad
m:=\frac{d(u)}{s(s-2)},\qquad
N:=|\E(u)|.
\]
Since $N\leq d(u)$, we have
\[
\frac{rN}{m}
\leq
\frac{(s-1)d(u)}{d(u)/(s(s-2))}
=s(s-1)(s-2)
=q.
\]
Thus Lemma~\ref{lem:common-intersection}, applied to
$F_1,\ldots,F_q$ as subsets of the $N$-element set $\E(u)$, gives
distinct indices $j_1,\ldots,j_{s-1}\in\{1,\ldots,q\}$ such that
\begin{align*}
|F_{j_1}\cap\cdots\cap F_{j_{s-1}}|
&\geq
|\E(u)|
\left(
\frac{d(u)}{es(s-2)|\E(u)|}
\right)^{s-1}\\
&\geq
|\E(u)|
\left(
\frac{1}{es(s-2)}
\right)^{s-1}\\
&\geq
(K-C)
\left(
\frac{1}{es(s-2)}
\right)^{s-1}\\
&\geq t,
\end{align*}
where the last inequality follows from \eqref{eq:bndKC2} (our initial
choice of $K$).

\medskip

We may therefore choose distinct vertices
$z_1,\ldots,z_t\in F_{j_1}\cap\cdots\cap F_{j_{s-1}}$.
Each $z_k$ belongs to $\E(u)$ and to $\E(v_{j_\ell})$ for every
$\ell\in\{1,\ldots,s-1\}$, and is therefore adjacent to each of
$u,v_{j_1},\ldots,v_{j_{s-1}}$.
Since $G$ is simple, none of the vertices $z_k$ coincides with any of
these $s$ vertices. Consequently, these two sets of vertices
give a copy of $K_{s,t}$ as a subgraph of $G$, a contradiction.

\medskip This concludes the proof of
Theorem~\ref{thm:maximum-degree}. \hfill $\Box$

\medskip

We note that in the proof of Theorem~\ref{thm:maximum-degree}, the
minor models of $K_{s,t}$ we have constructed all have the property
that each branch set consists of a star (a central vertex $u$ and a
subset of its neighbors). Such minor models are usually called
\emph{1-shallow minors}. We therefore obtain the following stronger
result:

\begin{theorem}\label{thm:1-shallow}
  For every integers $2\le s\le t$ and $d$, there are only finitely many connected
  locally finite graphs  with
  \begin{itemize}
    \item minimum
      degree at least $s$,
    \item maximum average degree at most $d$
    \item positive LLY curvature, and
    \item no $K_{s,t}$ as a 1-shallow minor.
    \end{itemize}
\end{theorem}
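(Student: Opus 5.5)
The plan is to rerun the proof of Theorem~\ref{thm:maximum-degree} verbatim, with two substitutions. First, the bound $\mad(G)\le d$ is no longer supplied by Theorem~\ref{thm:mader}; it is now a hypothesis. Second, every time the argument produced a $K_{s,t}$ subgraph or minor, we check that the model consists of stars, so that it contradicts the new hypothesis. Once the maximum degree is bounded by some $\Delta_{s,t,d}$, the deduction of finiteness is exactly as in the proof of Theorem~\ref{thm:kstmain}. We use $\kappa_{\LLY}(G)\ge 1/(\Delta(\Delta-1))$ together with the Lin--Lu--Yau diameter/order bound $|V(G)|\le \Delta^{1+2/\kappa}$. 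Only finitely many graphs of bounded order exist, so this concludes.

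The first step is Proposition~\ref{prop:low-degree} and Corollary~\ref{cor:most-neighbours-large}. There the contradictions came from genuine $K_{s,t}$ subgraphs. A subgraph is a 1-shallow minor with singleton branch sets, so the same argument works word for word for graphs with no 1-shallow $K_{s,t}$. That proof used only the exclusion of $K_{s,t}$ subgraphs and Lemmas~\ref{lem:curvature-expansion} and~\ref{lem:common-intersection}; density was not needed. Corollary~\ref{cor:most-neighbours-bounded-internal-degree} needs only $\mad(G)\le d$, which is now assumed. Hence, for a vertex $x$ of huge degree $D$, we again obtain $C,K$ depending on $s,t,d$ such that the set $U$ of neighbours with $d(u)>K$ and $c_u\le C$ satisfies $|U|\ge D/2$. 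We impose \eqref{eq:bndKC1} and \eqref{eq:bndKC2} on $K$ as before.

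Next come the cases in the proof of Theorem~\ref{thm:maximum-degree}. For $s=2$, the branch sets are $\{x\}$, $B=\{u\}\cup\E(u)$, and singletons $\{v_j\}$. The set $B$ is a star centred at $u$, since $\E(u)\subseteq N(u)$, so the model is 1-shallow. In Case~1 for $s\ge3$, each $B_i=\{u_i\}\cup S_i$ with $S_i\subseteq N(u_i)$ is again a star centred at $u_i$; the remaining branch sets are singletons. In Case~2 the construction yields an actual $K_{s,t}$ subgraph. So every contradiction used is a 1-shallow $K_{s,t}$ minor, and the degree bound follows with constants depending on $s,t,d$.

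The main point requiring care is that all earlier constants depended on $s,t$ only through $d_{s,t}$ from Mader's theorem. We must verify that nothing else in the argument used minor-freeness, for instance implicitly in Lemma~\ref{lem:curvature-expansion} or Turán's theorem; neither does. The constants must then be re-indexed by $(s,t,d)$. I expect this bookkeeping, not any new idea, to be the only real work. The hypercube example shows the hypothesis $\mad(G)\le d$ is genuinely needed at exactly the step invoking Corollary~\ref{cor:most-neighbours-bounded-internal-degree}.
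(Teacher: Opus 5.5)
Your proposal is correct and follows the paper's own route. The paper deduces this theorem by the same observation: with $\mad(G)\le d$ taken as a hypothesis in place of Mader's theorem, every contradiction in the proof of Theorem~\ref{thm:maximum-degree} comes either from a $K_{s,t}$ subgraph or from a model whose non-singleton branch sets are the stars $\{u\}\cup\E(u)$ or $\{u_i\}\cup S_i$, and finiteness then follows as for Theorem~\ref{thm:kstmain}.

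One aside is wrong, though it plays no role in the proof. The hypercube $\{0,1\}^n$ is triangle-free, so $c_u=0$ everywhere. It also contains $K_{s,t}$ as a 1-shallow minor once $n\ge s+t-1$: use $\{0\}$, the stars $\{e_i\}\cup\{e_i+e_j: s\le j\le n\}$ for $1\le i\le s-1$, and the singletons $\{e_j\}$ for $s\le j\le n$, where $e_1,\ldots,e_n$ are the unit vectors. So it does not show that the density hypothesis is needed, whether at that step or anywhere else.
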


\section{Preliminaries on 5-connected plane triangulations}\label{sec:prel5ct}

In the remainder of the paper, all graphs are assumed to be simple, connected,
finite and planar, and are considered with a fixed embedding in the
plane. A graph together with such an embedding is called a
\emph{plane graph}. A plane graph is a \emph{plane triangulation} if
every face, including the unbounded face, is bounded by a triangle.
\smallskip

If $G$ is a plane graph and $v$ is a vertex of $G$, then we always list
its neighbours $v_0,\ldots,v_{d(v)-1}$ according to their cyclic order
around $v$ in the fixed embedding, with indices taken modulo $d(v)$.

\smallskip

Let $G$ be a plane graph. For a cycle $C$ of $G$, we identify $C$ with
the simple closed curve given by its embedding. By the Jordan curve
theorem, $\mathbb{R}^2\setminus C$ has exactly two connected components.
We denote the bounded and unbounded components by
$\operatorname{int}(C)$ and $\operatorname{ext}(C)$, respectively. The cycle $C$ is said to be a
\emph{separating cycle} of $G$ if both $\operatorname{int}(C)$ and
$\operatorname{ext}(C)$ contain a vertex of $G$.

\medskip

For an integer $k\geq 1$, a graph $G$ is said to be
\emph{$k$-connected} if $|V(G)|\geq k+1$ and $G\setminus X$ is
connected for every $X\subseteq V(G)$ with $|X|<k$.
Thus, if a plane graph is $5$-connected, it has no separating cycle of
length $3$ or $4$. 
We will repeatedly use the following observation on the local structure of 5-connected planar triangulations, which can be routinely verified. %See Figure \ref{fig:degree-sixteen-neighborhood} for an illustration. 
%\textcolor{red}{Enlever cette figure et mettre celle de $G_{a,b}$ à la place.}

\begin{observation}[Local configuration around a vertex]
\label{obs:local-structure}
Let $G$ be a $5$-connected plane triangulation, and let
$v\in V(G)$ be a vertex of degree $a$. Write its neighbours as
$v_0,v_1,\ldots,v_{a-1}$, with the cyclic ordering and indexing
convention fixed above (all the indices being considered modulo $a$). Then the following hold.

\begin{enumerate}
    \item $a\geq 5$.

    \item Two vertices $v_i,v_j$ are  adjacent in $G$ if and only if they are consecutive (that is, $|i-j|=1$).

    \item For every $i$, $v_i$ and $v_{i+1}$  have
    exactly two common neighbours. One of them is $v$, and the other
    lies outside $N[v]$.

    \item For every $i$, $v_i$ and $v_{i+1}$ have exactly two common neighbors: $v$ and $v_{i+1}$.

    \item For every pair $i,j$ with $|i-j|> 2$, $v_i$ and $v_j$ have a unique common neighbor: $v$.
\end{enumerate}
\end{observation}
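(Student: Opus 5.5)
The plan is to derive every item from two facts about a $5$-connected plane triangulation $G$: it has no separating cycle of length $3$ or $4$, and the neighbours $v_0,\dots,v_{a-1}$ of $v$ induce a cycle $v_0v_1\cdots v_{a-1}v_0$ in this cyclic order. The second fact holds because the faces around $v$ are exactly the triangles $vv_iv_{i+1}$. The recurring device is to close a short path through $v$ into a cycle $C$ of length $3$ or $4$. The edges at $v$ leave $v$ in the cyclic order $v_0,\dots,v_{a-1}$, and the two edges of $C$ at $v$ split the other edges at $v$ into two cyclic intervals. The edges of each interval enter a single side of $C$ near $v$, and the two intervals enter opposite sides. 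So if both intervals contain a neighbour of $v$ that is not on $C$, then $C$ is separating, which is a contradiction.

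\emph{Items (1) and (2).} For (1), $|V(G)|\ge 6$ and $N(v)$ separates $v$ from the rest of the graph, so $5$-connectivity forces $a\ge 5$. For (2), consecutive neighbours are adjacent because $vv_iv_{i+1}$ is a face. Conversely, suppose $v_iv_j\in E(G)$ with $i,j$ not cyclically consecutive. The triangle $vv_iv_j$ then has $v_{i+1},\dots,v_{j-1}$ on one side and $v_{j+1},\dots,v_{i-1}$ on the other, and both lists are nonempty. This is a separating triangle, which is impossible.

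\emph{Item (3).} The edge $v_iv_{i+1}$ lies on exactly two faces. One is $vv_iv_{i+1}$; the other is a triangle $v_iv_{i+1}w$ with $w\neq v$. By (2), $w\notin N(v)$: a neighbour of $v$ adjacent to both $v_i$ and $v_{i+1}$ would have to be consecutive to both, which needs $a=3$. Now suppose there were a third common neighbour $w'\notin\{v,w\}$. The triangle $v_iv_{i+1}w'$ is not a face, since the only faces at the edge $v_iv_{i+1}$ are those through $v$ and $w$. In a triangulation on at least $5$ vertices, a non-facial triangle is separating: of the two faces incident to $v_iv_{i+1}$, one lies on each side of the triangle, and their apexes $v$ and $w$ are vertices off the triangle. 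This is a contradiction.

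\emph{Items (4) and (5).} Item (4) as printed repeats (3) and is evidently a typo. I will prove the intended statement: $v_i$ and $v_{i+2}$ have exactly the two common neighbours $v$ and $v_{i+1}$. Let $w\ne v$ be a common neighbour of $v_i$ and $v_j$, where $2\le|i-j|$ cyclically. If $w\in N(v)$, then by (2) it is consecutive to both $v_i$ and $v_j$. This forces $|i-j|=2$ and $w$ equal to the vertex in between, since the alternative would need $a=4$. If $w\notin N[v]$, then $C=vv_iwv_j$ is a $4$-cycle. The two arcs $v_{i+1},\dots,v_{j-1}$ and $v_{j+1},\dots,v_{i-1}$ of the link both lie off $C$. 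Both are nonempty: the first because $|i-j|\ge2$, the second because $a\ge5$ when $|i-j|=2$, and again because $|i-j|\ge 2$ in the other direction in case (5). So $C$ is a separating $4$-cycle, a contradiction. This proves (4) and (5).

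The only delicate point is justifying that the two link arcs really lie on opposite sides of $C$. I would do this with the local rotation at $v$, as in the first paragraph, rather than appealing to a general fact about non-facial cycles. The rest is routine bookkeeping modulo $a$.
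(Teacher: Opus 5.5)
Your proof is correct and follows the route the paper has in mind. The paper gives no argument beyond ``can be routinely verified'', right after noting that a $5$-connected plane graph has no separating cycle of length $3$ or $4$; your rotation-at-$v$ argument ruling out separating triangles and quadrilaterals is exactly that routine verification. Your reading of item (4) as a statement about $v_i$ and $v_{i+2}$, with $|i-j|$ taken as cyclic distance throughout, is the intended one, and it matches how the paper later uses the observation, e.g.\ $N(t_i)\cap N(v_{i+1})=\{v_i,w_i\}$.
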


%\begin{figure}[htb]
%    \centering
%    \includegraphics[width=0.3\textwidth]{neighborhood.pdf}
%    \caption{The subgraph induced by $N[v]$ around a vertex $v$ of
%    degree $16$. The neighbours $v_0,v_1,\ldots,v_{15}$, listed in
%    cyclic order around $v$, induce the cycle
%    $v_0v_1\cdots v_{15}v_0$.}
%    \label{fig:degree-sixteen-neighborhood}
%\end{figure}

\begin{figure}[htb]
    \centering
    \includegraphics[width=0.3\textwidth]{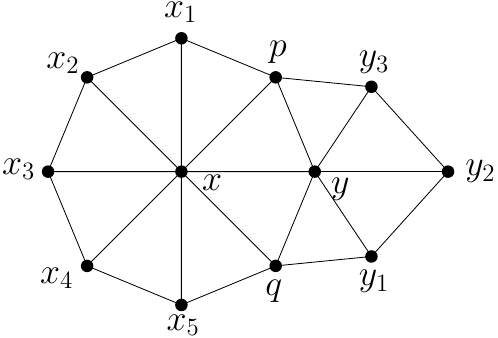}
    \caption{The induced configuration $G_{8,6}$ around an edge
    $xy$ with $d(x)=8$ and $d(y)=6$.
    The vertices $p,q$ are the common neighbours of $x$ and $y$.}
    \label{fig:local-G86}
\end{figure}

We obtain the following as a direct consequence. See Figure \ref{fig:local-G86} for an illustration.

 \begin{lemma}[Local configuration around an edge]
\label{lem:local-edge-configuration}
Let $G$ be 5-connected triangulation, and let
$xy\in E(G)$, with $d(x)=a$ and $d(y)=b$.
Let $p,q$ be the third vertices of the faces incident with $xy$.
Then $N_G(x)\cap N_G(y)=\{p,q\}$, and we may label the neighbours
of $x$ and $y$ in cyclic order as
$y,p,x_1,\ldots,x_{a-3},q$ and
$x,q,y_1,\ldots,y_{b-3},p$, respectively. Thus, the induced graph $G_{a,b}:=G[N[x]\cup N[y]]$ has vertex set
$$
V(G_{a,b})
=\{x,y,p,q,x_1,\ldots,x_{a-3},y_1,\ldots,y_{b-3}\}.
$$
Each neighbourhood induces a cycle, and the edges of $G_{a,b}$
are precisely the edges of these two cycles and those incident
with $x$ or $y$.
For $u,w\in V(G_{a,b})$, the distances $d_G(u,w)$ and
$d_{G_{a,b}}(u,w)$ agree whenever $d_{G_{a,b}}(u,w)\leq 2$.
If $d_{G_{a,b}}(u,w)=3$, then $d_G(u,w)\in\{2,3\}$.
\end{lemma}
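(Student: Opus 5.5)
We derive everything from Observation~\ref{obs:local-structure} applied at $x$ and at $y$, together with the absence of separating $3$- and $4$-cycles. Since $G$ is a triangulation, the two faces containing $xy$ are $xyp$ and $xyq$, so $p,q\in N(x)\cap N(y)$. Conversely, a common neighbour $w\notin\{p,q\}$ would give a triangle $xyw$ that is not a face. Such a triangle is separating: $p$ and $q$ lie on opposite sides of it, because the rotation at $x$ places $p$ and $q$ on the two sides of $y$, and hence on different sides of the path $w x y$. This contradicts $5$-connectivity, so $N(x)\cap N(y)=\{p,q\}$. In the rotation at $x$, the vertices $p$ and $q$ are the two neighbours of $y$ in the cycle induced by $N(x)$ (part (2) of the observation). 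We therefore list $N(x)$ as $y,p,x_1,\dots,x_{a-3},q$, and similarly $N(y)$ as $x,q,y_1,\dots,y_{b-3},p$. The $x_i$ and $y_j$ are all distinct, since the common neighbours are only $p,q$. This gives the vertex set of $G_{a,b}$.

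Next we determine the edges. Edges incident with $x$ or $y$ are clear, and part (2) says that $N(x)$ and $N(y)$ each induce exactly their cycle. It remains to exclude edges $x_iy_j$, as well as chords inside $N(x)$ or $N(y)$ involving $p,q$; the latter are already covered by the induced-cycle statement. Suppose $x_iy_j\in E(G)$. Then $x\,x_i\,y_j\,y$ is a $4$-cycle. We argue it is separating: $p$ and $q$ lie on opposite sides of it, because at $x$ the edges $xp$ and $xq$ lie in different arcs of the rotation determined by $xy$ and $xx_i$. Indeed, $x_i$ lies strictly between $p$ and $q$ on the side away from $y$. This again contradicts $5$-connectivity. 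So the edges are exactly as claimed.

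For the distance statement, note that $G_{a,b}$ is a subgraph, so $d_G\le d_{G_{a,b}}$. Equality is immediate when $d_{G_{a,b}}\le 1$, because the subgraph is induced. For $d_{G_{a,b}}(u,w)=2$, we need $u,w$ nonadjacent in $G$, which holds by inducedness. If $d_{G_{a,b}}(u,w)=3$, then $d_G(u,w)\ge 2$ by inducedness. Moreover, every vertex of $G_{a,b}$ is within distance $1$ of $x$ or $y$, so $d_G(u,w)\le 3$. Hence $d_G(u,w)\in\{2,3\}$.

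The main obstacle is making the two separation arguments rigorous from the rotation system: showing that $p$ and $q$ lie in different components of the complement of the triangle $xyw$ and of the $4$-cycle $xx_iy_jy$. I would handle this uniformly. Locally near $x$, a cycle through $x$ using edges $xe$ and $xf$ splits the rotation at $x$ into two open arcs, and vertices whose edges to $x$ lie in different arcs are on different sides. Applying this with $(e,f)=(y,w)$ or $(y,x_i)$ puts $p$ and $q$ in different arcs.
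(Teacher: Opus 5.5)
Your proof is correct and follows the paper's: the common neighbours and the induced neighbourhood cycles come from Observation~\ref{obs:local-structure}, and the distance claims follow from inducedness together with $d_G\le d_{G_{a,b}}$, exactly as you argue. The one difference is how you exclude the edges $x_iy_j$: you show directly, via the rotation at $x$, that $x\,x_i\,y_j\,y$ would be a $4$-cycle separating $p$ from $q$, whereas the paper cites the Observation's statement that the nonconsecutive neighbours $y,x_i$ of $x$ have no common neighbour outside $N[x]$. That statement rests on the same separating-cycle fact, so your argument is a self-contained unpacking of it, and it also avoids the paper's imprecise reference to parts (5)--(6) of the Observation.
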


\begin{proof}
The common-neighbour assertion and the induced neighbourhood
cycles follow from Observation~\ref{obs:local-structure}.
An edge $x_i y_j$ would make $y_j$ a common neighbour of
the nonconsecutive neighbours $y,x_i$ of $x$, lying outside
$N[x]$, contrary to parts~(5)--(6) of that lemma.
Thus no additional edges occur.

Since $G_{a,b}$ is induced, adjacency is the same in both
graphs. Together with $d_G(u,w)\leq d_{G_{a,b}}(u,w)$,
this proves the distance assertions.
\end{proof}

We conclude this section with a bound on the curvature of an edge in a 5-connected triangulation as a function of the degrees of its endpoints.

\begin{lemma}[A curvature estimate]
\label{lem:curvature-estimate}
Let $G$ be a 5-connected plane triangulation, and let $xy\in E(G)$
with $d(x)=a$ and $d(y)=b$. Then
$$
\kappa_{\mathrm{LLY}}(x,y)\leq \frac{4}{a}+\frac{3}{b}-1.
$$
\end{lemma}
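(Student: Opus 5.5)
The bound will come from Theorem~\ref{thm:limit-free}. Since $\kappa_{\LLY}(x,y)$ is an infimum over $1$-Lipschitz functions $f$ with $f(y)-f(x)=1$, one well-chosen such $f$ gives an upper bound. The value $\Delta f(x)-\Delta f(y)$ depends only on $f$ restricted to $N[x]\cup N[y]$, so I will define $f$ on $V(G_{a,b})$ using the labelling of Lemma~\ref{lem:local-edge-configuration}. I will then check that it is $1$-Lipschitz for $d_G$, and extend it to all of $V(G)$.

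\textbf{Choice of $f$.} Aiming for $\Delta f(x)=\tfrac4a-1$ and $\Delta f(y)=-\tfrac3b$, set
\[
f(x)=0,\quad f(y)=1,\quad f(p)=f(q)=0,\quad f(x_i)=-1\ (1\le i\le a-3),\quad f(y_j)=1\ (1\le j\le b-3).
\]
The Laplacian at $x$ sums over the neighbours $y,p,q,x_1,\dots,x_{a-3}$:
\[
\Delta f(x)=\tfrac1a\bigl(1+0+0-(a-3)\bigr)=\tfrac4a-1.
\]
The Laplacian at $y$ sums over the neighbours $x,p,q,y_1,\dots,y_{b-3}$:
\[
\Delta f(y)=\tfrac1b\bigl(-1-1-1+0\cdot(b-3)\bigr)=-\tfrac3b.
\]
Hence $\Delta f(x)-\Delta f(y)=\tfrac4a+\tfrac3b-1$, which is the claimed bound.

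\textbf{Lipschitz check (the main point).} All values lie in $\{-1,0,1\}$. So the only possible violations are pairs with value difference $2$ that are adjacent in $G$, namely pairs $\{x_i,y\}$ and $\{x_i,y_j\}$. By Lemma~\ref{lem:local-edge-configuration}, $G_{a,b}$ is an induced subgraph of $G$, and its only edges are those of the two neighbourhood cycles together with those incident with $x$ or $y$. In particular $x_i$ is not adjacent to $y$, since $N(x)\cap N(y)=\{p,q\}$. Also, there is no edge $x_iy_j$. So every such pair is at $G$-distance at least $2$.

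All pairs whose values differ by at most $1$ are automatically fine, since distinct vertices are at distance at least $1$. Hence $f$ is $1$-Lipschitz on $V(G_{a,b})$ with respect to $d_G$.

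\textbf{Extension and conclusion.} Extend $f$ to $V(G)$ by the McShane extension
\[
\tilde f(v)=\min_{w\in V(G_{a,b})}\bigl(f(w)+d_G(v,w)\bigr).
\]
This is $1$-Lipschitz on $G$ and agrees with $f$ on $V(G_{a,b})$. The Laplacians at $x$ and $y$ only involve $N[x]\cup N[y]=V(G_{a,b})$, so they are unchanged. Theorem~\ref{thm:limit-free} then gives $\kappa_{\LLY}(x,y)\le \tfrac4a+\tfrac3b-1$.

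The only delicate step is the non-adjacency of the $x_i$'s with $y$ and with the $y_j$'s. This is exactly where $5$-connectivity enters, through Observation~\ref{obs:local-structure} and Lemma~\ref{lem:local-edge-configuration}.
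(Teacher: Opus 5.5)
Your proposal is correct and is essentially the paper's proof: the same test function ($f(x)=f(p)=f(q)=0$, $f(y)=f(y_j)=1$, $f(x_i)=-1$) on $N[x]\cup N[y]$, shown to be $1$-Lipschitz via Lemma~\ref{lem:local-edge-configuration} and plugged into Theorem~\ref{thm:limit-free}. Your explicit check of the pairs whose values differ by $2$, and your McShane extension, simply spell out what the paper states in one line.
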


\begin{proof}
Using the notation of Lemma~\ref{lem:local-edge-configuration},
define $f$ on $N(x)\cup N(y)$ by
$$
f(x)=f(p)=f(q)=0,\qquad f(y)=1,\qquad
f(x_i)=-1,\qquad f(y_j)=1.
$$
By that lemma, $f$ extends to a $1$-Lipschitz function on $V(G)$.
Since $f(y)-f(x)=1$, Theorem \ref{thm:limit-free} (the limit-free characterization of LLY curvature) gives
$$
\kappa_{\mathrm{LLY}}(x,y)
\leq \Delta f(x)-\Delta f(y)
=\left(\frac{4}{a}-1\right)+\frac{3}{b}.
$$
\end{proof}
\section{The 5-connected plane triangulations with positive LLY curvature}\label{sec:5ct}

In this section, we prove Theorem \ref{thm:5ct}.
Consider the five 5-connected triangulations depicted in Figures \ref{fig:G12}, \ref{fig:G14}, \ref{fig:G15}, \ref{fig:G16}, and \ref{fig:G17}. In each figure, the red vertices have degree $6$, whereas the black vertices have degree $5$. Green edges have LLY curvature equal to $\tfrac25$, blue edges have LLY curvature equal to $\tfrac2{15}$, and purple edges have LLY curvature equal to $\tfrac16$.

\begin{figure}[htb]
    \centering
    \begin{minipage}[t]{0.48\textwidth}
        \centering
        \includegraphics[width=0.65\linewidth]{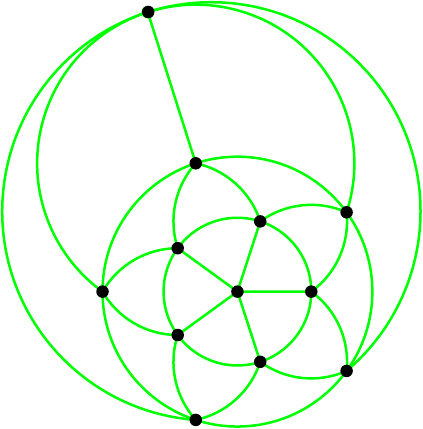}
        \caption{The icosahedron graph $G_{12}$.}
        \label{fig:G12}
    \end{minipage}\hfill
    \begin{minipage}[t]{0.48\textwidth}
        \centering
        \includegraphics[width=0.65\linewidth]{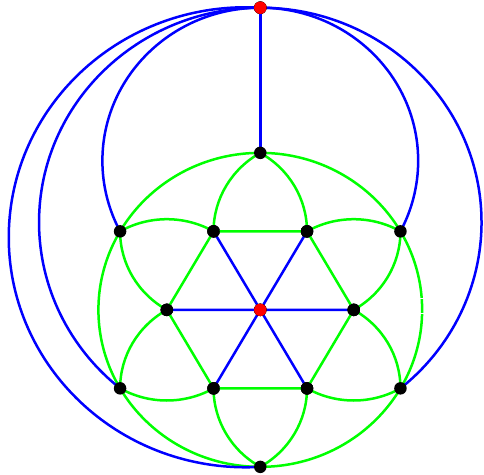}
        \caption{The graph $G_{14}$.}
        \label{fig:G14}
    \end{minipage}
\end{figure}

\begin{figure}[htb]
    \centering
    \begin{minipage}[t]{0.48\textwidth}
        \centering
        \includegraphics[width=0.65\linewidth]{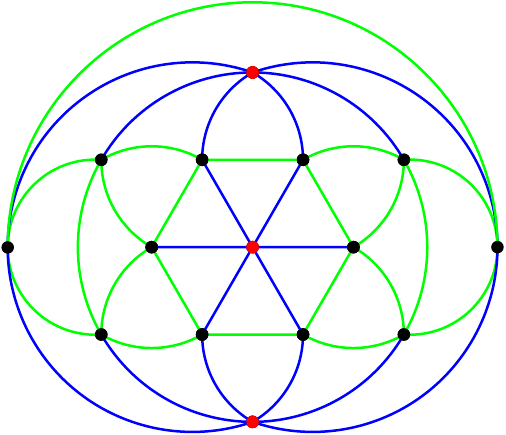}
        \caption{The graph $G_{15}$.}
        \label{fig:G15}
    \end{minipage}\hfill
    \begin{minipage}[t]{0.48\textwidth}
        \centering
        \includegraphics[width=0.65\linewidth]{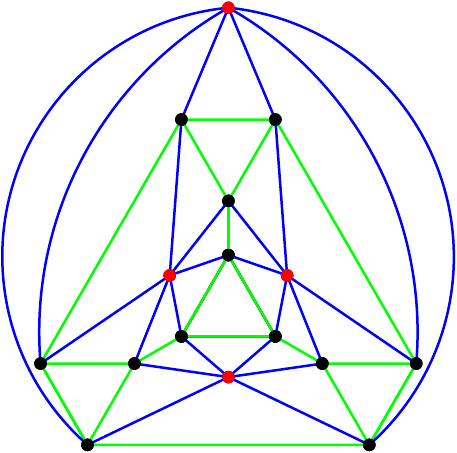}
        \caption{The graph $G_{16}$.}
        \label{fig:G16}
    \end{minipage}
\end{figure}

\begin{figure}[htb]
    \centering
    \includegraphics[width=0.3\textwidth]{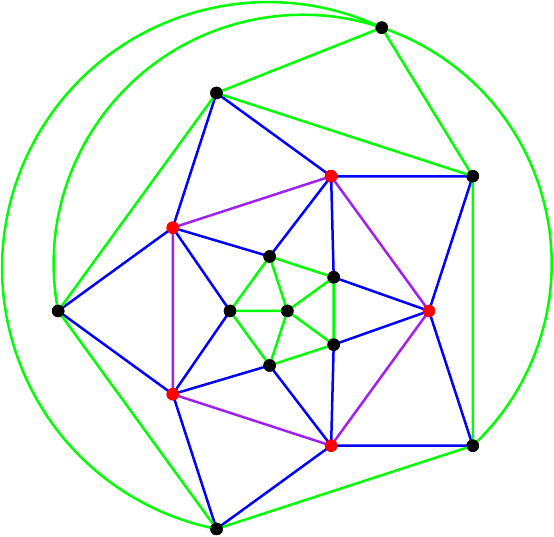}
    \caption{The graph $G_{17}$.}
    \label{fig:G17}
\end{figure}

\smallskip

The five triangulationss are all 5-connected, have positive LLY curvature, and every vertex has degree  $5$ or $6$. We will show that these are in fact the only examples.
The proof is divided into four steps, corresponding to the following properties.

\medskip

\noindent $\boldsymbol{\mathcal P_1}$:   \textbf{The maximum degree is at most $6$.}
    This is done by excluding all larger degrees. For sufficiently large degrees, we construct suitable $1$-Lipschitz functions giving non-positive upper bounds for the curvature. The remaining cases are excluded by combining curvature constraints with restrictions imposed by planarity and 5-connectivity.

    \medskip

\noindent $\boldsymbol{\mathcal P_2}$:
    \textbf{The local configurations around vertices of degree $5$ and $6$ are strongly restricted.}
    Let $v$ be a vertex of degree $a\in\{5,6\}$, and write its neighbours in cyclic order as
    \[
        v_0,\ldots,v_{a-1},
    \]
    with indices understood modulo $a$. We prove the following.
    \begin{enumerate}
        \item If $d(v)=6$, then exactly one of the following occurs:
        \begin{enumerate}
            \item all six neighbours of $v$ have degree $5$;
            \item exactly two neighbours of $v$ have degree $6$, and they are antipodal in the cyclic order, that is, they are of the form $v_i$ and $v_{i+3}$ for some $i\in\mathbb Z/6\mathbb Z$.
        \end{enumerate}

        \item If $d(v)=5$, then $v$ has at most two neighbours of degree $6$. Consequently, according to the number and relative positions of its degree $6$ neighbours, only 3 many local configurations are possible, up to cyclic reordering.
    \end{enumerate}

    \medskip

    \noindent $\boldsymbol{\mathcal P_3}$:
    \textbf{The graph has at most $17$ vertices.}
    Using $\Delta(G)\leq 6$, Euler's formula, and a discharging argument, we show that $
        |V(G)|\leq 17$.

        \medskip

\noindent $\boldsymbol{\mathcal P_4}$:
    \textbf{The preceding properties determine the graph completely.}
    Combining $\mathcal P_1$--$\mathcal P_3$ with a final discharging argument, we show that $G$ is isomorphic to one of
    \[
        G_{12},\quad G_{14},\quad G_{15},\quad G_{16},\quad G_{17}.
    \]

\medskip

We prove the four properties in separate subsections. Together, they complete the proof of the Theorem \ref{thm:5ct}.

\medskip

We note that we could have also stopped right after proving Property $\mathcal{P}_3$ (or any reasonable bound on the order of $G$; 24 would be enough for instance, and proving it does not require Property $\mathcal{P}_2$ in its entirety). It would then suffice to check all 5-connected plane triangulations of order at most 24 and verify that among them, only $G_{12}$, $G_{14}$, $G_{15}$, $G_{16}$, and $G_{17}$ have positive LLY curvature. But this would not really explain why these are the only one, so we found it more satisfactory to actually \emph{prove} that these are the only one. 

\subsection{Property $\mathcal P_1$: bounding the maximum degree}

\begin{proposition}\label{prop:degree}
    If $G$ is a 5-connected plane triangulation with positive LLY curvature, then $\Delta(G) \leq 6$.
\end{proposition}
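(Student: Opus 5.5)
We argue by contradiction: assume $G$ is a $5$-connected plane triangulation with positive LLY curvature and a vertex $x$ with $a:=d(x)\geq 7$. The main tool is Lemma~\ref{lem:curvature-estimate}: for every edge $xy$ with $d(y)=b$ we need $\frac4a+\frac3b>1$, and, applying the lemma with the roles of $x$ and $y$ exchanged, also $\frac4b+\frac3a>1$. For $a\geq 7$ the second inequality gives $\frac4b>1-\frac3a\geq \frac47$, so $b<7$. Thus every neighbour of a vertex of degree at least $7$ has degree $5$ or $6$. The first inequality gives $\frac4a>1-\frac3b$. For $b=6$ this forces $a<8$, and for $b=5$ it forces $a<10$. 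Hence $\Delta(G)\leq 9$, and vertices of degree $8$ or $9$ have only degree-$5$ neighbours. This disposes of all large degrees by the explicit Lipschitz function of Lemma~\ref{lem:curvature-estimate}.

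It remains to exclude $a\in\{7,8,9\}$. For these I would sharpen the test function of Lemma~\ref{lem:curvature-estimate} using the second neighbourhood of $y$. By Lemma~\ref{lem:local-edge-configuration}, when $d(y)=5$ the outer neighbours $y_1,y_2$ of $y$ are at distance $2$ from $x$. We could set $f(y_j)=1$ and $f(x_i)=-1$, but also lower $f$ at $p$ or $q$ to $-1$ when the structure allows it. This is not $1$-Lipschitz directly, since $p\sim y$ with $f(y)=1$, so we cannot simply do this; instead we exploit the neighbours of $p,q$.

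The more promising route uses the degree-$5$ neighbours of $x$. Take $y\sim x$ with $d(y)=5$ and $a\geq 7$. The edge $py$, with $p$ a common neighbour, also satisfies the constraints, so $d(p)\in\{5,6\}$. Around $x$, all of $x_1,\dots,x_{a-1}$ have degree $5$ or $6$. I would then apply Lemma~\ref{lem:curvature-estimate}-type functions to the edge $yp$ or $yy_1$, where the third vertex $x$ of large degree appears among the common neighbours, and compute a refined upper bound. The idea is that a high-degree common neighbour carries weight $1/d$ that is too small to compensate. Concretely, for the edge $uv$ with $d(u)=d(v)=5$ and $x$ a common neighbour, I would choose $f$ with $f(v)=1$, $f(u)=0$, value $-1$ on the two private neighbours of $u$, and $f=0$ on the common neighbours. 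I would then try to push the private neighbours of $v$ up and exploit that $x$'s other neighbours force the Lipschitz extension. The hope is a bound like $\Delta f(u)-\Delta f(v)\leq 0$ once one of the faces involves a vertex of degree $\geq 7$.

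The main obstacle is this finite case analysis for $a\in\{7,8,9\}$ (chiefly $a=7$ with degree-$6$ neighbours). Here the plain estimate is positive and one must use the structure of the second neighbourhood. In particular, one must check via Lemma~\ref{lem:local-edge-configuration} that the candidate function is genuinely $1$-Lipschitz, since distance-$3$ pairs in $G_{a,b}$ may have distance $2$ in $G$. I would first attempt a direct computation of $\kappa_{\LLY}$ along an edge $xy$ with $d(y)=5$ by an explicit optimal transport, matching $N(x)$ to $N(y)$ through the path $x_1,\dots,x_{a-3}$. The goal is to show the cost forces $\kappa\leq 0$ when $a\geq 7$, using that $y$'s two outer neighbours must both be reached from $a-3\geq 4$ consecutive $x_i$'s at distance $\geq 2$. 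Failing that, I would use a counting/parity argument around $x$: consecutive degree-$5$ neighbours of $x$ create $4$-separations or forbidden configurations by 5-connectivity.
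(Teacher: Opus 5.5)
\textbf{Verdict: genuine gap.} Your first paragraph is correct and matches the paper's Case~1 and the first step of Case~2. Lemma~\ref{lem:curvature-estimate}, used in both orientations, gives three facts: $\Delta(G)\le 9$, vertices of degree $8$ or $9$ have only degree-$5$ neighbours, and no vertex of degree at least $7$ has a neighbour of degree at least $7$.

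\textbf{Your concrete route for $a\in\{7,8,9\}$ cannot work.} These cases are the real content of the proposition, and for them you give only strategies. Take an edge $uv$ with $d(u)=d(v)=5$ and, as in Lemma~\ref{lem:local-edge-configuration}, $N(u)=\{v,p,u_1,u_2,q\}$ and $N(v)=\{u,q,v_1,v_2,p\}$ in cyclic order. With $f(u)=0$ and $f(v)=1$ one gets $\Delta f(u)-\Delta f(v)=\frac15\bigl(6+f(u_1)+f(u_2)-f(v_1)-f(v_2)\bigr)$. Since $u_1,v_2$ share the neighbour $p$ and $u_2,v_1$ share the neighbour $q$, every $1$-Lipschitz $f$ gives at least $\frac25$. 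So every $5$--$5$ edge of a $5$-connected triangulation has curvature exactly $\frac25$, whatever the degree of a common neighbour $x$. Indeed $d(x)$ never enters as a weight; only $1/d(u)$ and $1/d(v)$ do. The same computation gives curvature at least $\frac2{15}$ on every $5$--$6$ edge. Hence edges like $yp$ can never produce the contradiction; it must come from edges incident with the high-degree vertex. Note also that an explicit transport plan bounds $W_1$ from above, i.e.\ bounds $\kappa$ from below. To prove $\kappa\le 0$ you need a $1$-Lipschitz test function.

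\textbf{The missing idea is a dichotomy plus planarity.} On the edge $vv_0$ with $d(v)=a\in\{7,8,9\}$ and $d(v_0)=5$, the paper takes $f(v)=f(v_2)=f(v_{a-2})=0$, $f(v_0)=f(v_1)=f(v_{a-1})=1$, $f(v_j)=-1$ for $3\le j\le a-3$, and $f=2$ on the outer neighbours $w_{a-1},w_0$ of $v_0$. This gives $\Delta f(v)-\Delta f(v_0)=\frac{8-a}{a}-\frac15<0$.
\begin{itemize}
\item This $f$ is $1$-Lipschitz only if $w_{a-1},w_0$ are at distance $3$ from $v_3,\ldots,v_{a-3}$, which Lemma~\ref{lem:local-edge-configuration} does not guarantee. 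For example, for $a=8$ a chord $w_0w_4$ of the second-neighbourhood cycle $C=w_0\cdots w_{a-1}$ is compatible with $5$-connectivity and gives $d_G(w_0,v_4)=2$.
\item So positivity only forces some such pair to be at distance $2$. Because $5$-connectivity forbids chords of $C$ between vertices at cyclic distance $2$ or $3$, this means $C$ has a long chord on the side away from $v$.
\item Applying this at two rotated positions (to $w_{a-1},w_0$ and to $w_1,w_2$) yields two chords with interlaced endpoints, contradicting planarity.
\item For $a=7$, degree-$6$ neighbours $v_i$ insert extra vertices $t_i$ into $C$. One then needs a second test function, with value $2$ at $t_i$, that forces chords at $t_i$. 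This is followed by a case analysis on the cyclic distance ($1$, $2$ or $3$) between degree-$6$ neighbours, each case ending in crossing chords.
\end{itemize}
Your closing remark about forbidden configurations from $5$-connectivity points in this direction. But without the curvature-or-chord dichotomy, combined with planarity across several edges around $v$, the proof of $\Delta(G)\le 6$ is not there.
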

\begin{proof}
Let $G$ is a 5-connected plane triangulation with positive LLY curvature.    Set $\Delta(G)=a$, and let $v$ be a vertex of degree $a$. Suppose for contradiction that $a \geq 7$.

\medskip
\noindent
\textbf{Case 1: $a\geq 10$.}

\noindent Let $y$ be a neighbour of $v$, and write $d(y)=b\geq 5$.
By Lemma~\ref{lem:curvature-estimate},
$$
\kappa_{\mathrm{LLY}}(v,y)\leq \frac{4}{a}+\frac{3}{b}-1
\leq \frac{4}{10}+\frac{3}{5}-1=0,
$$
contradicting positive curvature. Therefore, $\Delta(G)\leq 9$.

\medskip
\noindent
\textbf{Case 2: $a\in\{8,9\}$.}

\noindent Suppose first that $v$ has a neighbour $y$ of degree $b\geq 6$.
By Lemma~\ref{lem:curvature-estimate},
$$
\kappa_{\mathrm{LLY}}(v,y)\leq \frac{4}{a}+\frac{3}{b}-1
\leq \frac{4}{8}+\frac{3}{6}-1=0,
$$
a contradiction. Thus every neighbour of $v$ has degree $5$.

\smallskip

Write the neighbours of $v$ in cyclic order as
$v_0,\ldots,v_{a-1}$, with indices taken modulo $a$.
For each $i$, let $w_i$ be the third vertex of the face incident
with $v_iv_{i+1}$ on the side not containing $v$. See Figures \ref{fig:degree-nine-all-five} and \ref{fig:degree-eight-all-five} for an illustration. Then
$$
N(v_i)=\{v,v_{i-1},v_{i+1},w_{i-1},w_i\}.
$$
By Observation~\ref{obs:local-structure}, the vertices $w_i$
lie outside $N[v]$ and are pairwise distinct.

\medskip

We next show that some pairs of vertices $w_i$ and $v_j$ which are at distance at least 3 in the ball of radius 2 centered in $v$ must be at distance 2 in $G$, and thus the cycle $w_0,w_1,\ldots,w_{a-1}$ must have some chords. The corresponding sets of vertices $v_i$ and $w_j$ are identified in blue in Figures \ref{fig:degree-nine-all-five} and \ref{fig:degree-eight-all-five}. By applying this claim several times we will obtain crossing chords, contradicting the planarity of $G$.
\begin{figure}[htb]
    \centering
    \begin{minipage}[t]{0.48\textwidth}
        \centering
        \includegraphics[width=0.65\linewidth]{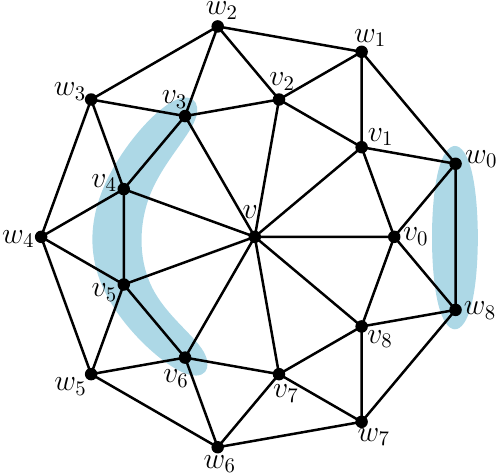}
        \caption{The local configuration around a vertex of degree $9$.}
        \label{fig:degree-nine-all-five}
    \end{minipage}\hfill
    \begin{minipage}[t]{0.48\textwidth}
        \centering
        \includegraphics[width=0.65\linewidth]{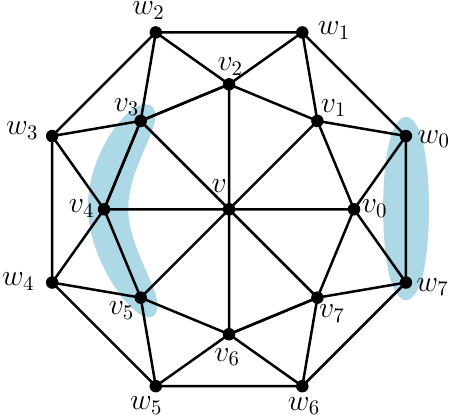}
        \caption{The local configuration around a vertex of degree $8$.}
        \label{fig:degree-eight-all-five}
    \end{minipage}
\end{figure}
\medskip
\begin{claim}\label{cl:72}
At least one of the distances $d_G(w_{a-1},v_j)$ or
$d_G(w_0,v_j)$, for $3\leq j\leq a-3$, is equal to $2$.    
\end{claim}

\noindent
\emph{Proof of the claim.}
Suppose, for a contradiction, that none of these distances equals $2$.
Define $f$ on $N[v]\cup N[v_0]$ by
$$
\begin{aligned}
f(v)=f(v_2)=f(v_{a-2})&=0,
& f(v_0)=f(v_1)=f(v_{a-1})&=1,\\
f(v_j)&=-1\quad(3\leq j\leq a-3),
& f(w_{a-1})=f(w_0)&=2.
\end{aligned}
$$
The only pairs whose values differ by $3$ are those listed in
the claim. Their distances are $3$ by our assumption and
Lemma~\ref{lem:local-edge-configuration}. Pairs whose values
differ by $2$ are nonadjacent in the induced local configuration.
Hence $f$ extends to a $1$-Lipschitz function on $V(G)$, and
$$
\kappa_{\mathrm{LLY}}(v,v_0)\leq \Delta f(v)-\Delta f(v_0)
=\frac{8-a}{a}-\frac{1}{5}<0,
$$
contradicting positive curvature. This proves the claim. \hfill $\blacksquare$

\medskip

It was convenient to fix $w_0,w_{a-1}$ when stating Claim \ref{cl:72}, but by symmetry it can be applied in the same way to any pair of consecutive vertices $w_i,w_{i+1}$, by appropriately shifting the indices of the corresponding vertices $v_j$.

\medskip

Since each $v_i$ has degree $5$, triangulation gives
$w_{i-1}w_i\in E(G)$ for every $i$. Thus the vertices $w_i$
form a cycle $C=w_0w_1\cdots w_{a-1}w_0$ surrounding $v$.
The prescribed triangular faces fill the side of $C$ containing
$v$, so all possible chords of $C$ lie on the other side.
By 5-connectivity, each vertex of $C$ has at least one neighbor in $\mathrm{ext}(C)$ and no chord joins vertices at cyclic distance $2$ or $3$ in $C$.

\smallskip
\noindent
\textbf{Suppose first that $a=9$.}
By Claim~\ref{cl:72}, at least one of
$w_0w_4$, $w_0w_5$, $w_8w_3$, $w_8w_4$ belongs to $E(G)$;
the other possible chords have cyclic distance $2$ or $3$.
These four possibilities are equivalent up to cyclic relabelling
and reversal, so we may assume that $w_0w_4\in E(G)$.
Applying Claim~\ref{cl:72} to $w_1,w_2$ then forces at least one of
$w_2w_6$, $w_2w_7$, $w_1w_5$, $w_1w_6$ to belong to $E(G)$.
Each of these edges crosses $w_0w_4$, since their endpoints
alternate in the cyclic order on $C$. This contradicts planarity.
Hence $a\neq 9$.

\smallskip
\noindent
\textbf{Suppose now that $a=8$.}
Applying Claim~\ref{cl:72} again gives $w_0w_4\in E(G)$ or
$w_7w_3\in E(G)$. After cyclic relabelling, we may assume
that $w_0w_4\in E(G)$.
Applying Claim~\ref{cl:72} again to $w_1,w_2$ forces $w_2w_6\in E(G)$
or $w_1w_5\in E(G)$. Both edges cross $w_0w_4$, again
contradicting planarity. Therefore, $a\leq 7$.

\medskip
\noindent
\textbf{Case 3: $a=7$.}

\noindent It remains to exclude vertices of degree $7$.
First, no two such vertices are adjacent. Indeed, if
$xy\in E(G)$ and $d(x)=d(y)=7$, then
Lemma~\ref{lem:curvature-estimate} gives
$$
\kappa_{\mathrm{LLY}}(x,y)\leq \frac{4}{7}+\frac{3}{7}-1=0,
$$
contradicting positive curvature. Thus every neighbour of $v$
has degree $5$ or $6$.

We use the same notation $v_i,w_i$ around $v$, now with indices
taken modulo $7$. If $d(v_i)=6$, let $t_i$ be its remaining
neighbour, so that its neighbours, in cyclic order, are
$v,v_{i-1},w_{i-1},t_i,w_i,v_{i+1}$.
If $d(v_i)=5$, the vertex $t_i$ is omitted. We use this convention
in all the sets below.

\medskip

\begin{figure}[htb]
    \centering
    \includegraphics[width=0.3\textwidth]{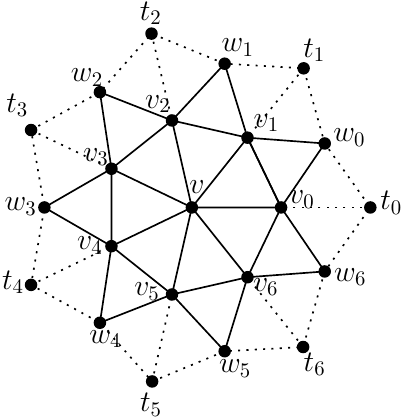}
    \caption{The local configuration when $d(v)=7$. The dashed edges
    incident with $t_i$ are present only when $d(v_i)=6$. If $d(v_i)=5$, then $t_i$ does not exist and $w_i$ and $w_{i+1}$ are adjacent.}
    \label{fig:loc7}
\end{figure}

As before, the vertices $w_i$ lie outside $N[v]$ and are pairwise distinct.
Moreover, Observation~\ref{obs:local-structure} shows that
each $t_i$ lies outside $N[v]$ and is adjacent to no neighbour
of $v$ other than $v_i$. Hence all the $w_i$ and the existing
$t_i$ are pairwise distinct. Figure~\ref{fig:loc7} contains all
vertices at distance at most $2$ from $v$; the dashed edges incident
with $t_i$ are present only when $d(v_i)=6$.

\begin{claim}\label{cl:73}
    The following hold:
\begin{enumerate}
    \item At least one of $d_G(w_6,v_3)$, $d_G(w_6,v_4)$,
    $d_G(w_0,v_3)$ and $d_G(w_0,v_4)$ is equal to $2$.
    \item If $d(v_0)=6$, then $d_G(t_0,v_j)=2$ for some
    $j\in\{2,3,4,5\}$.
\end{enumerate}

\end{claim} 
\smallskip
\noindent
\emph{Proof of the claim.}
Write $b=d(v_0)\in\{5,6\}$.
For the first assertion, suppose that none of the four distances
equals $2$. Use the function $f$ from the proof of Claim~\ref{cl:73} with
$a=7$, and set $f(t_0)=1$ if $b=6$.
The same argument shows that $f$ extends to a $1$-Lipschitz
function on $V(G)$. Moreover,
$$
\kappa_{\mathrm{LLY}}(v,v_0)\leq \Delta f(v)-\Delta f(v_0)
=\frac{1}{7}-\frac{1}{b}<0,
$$
a contradiction. This proves the first assertion for both
$b=5$ and $b=6$.

For the second assertion, suppose that $b=6$ and that
$d_G(t_0,v_j)\neq 2$ for every $j\in\{2,3,4,5\}$.
Define $g$ on $N[v]\cup N[v_0]$ by
$$
\begin{aligned}
g(v)=g(v_1)=g(v_6)&=0,
&g(v_0)=g(w_6)=g(w_0)&=1,\\
g(v_j)&=-1\quad(j\in\{2,3,4,5\}),
&g(t_0)&=2.
\end{aligned}
$$
The only pairs whose values differ by $3$ are $(t_0,v_j)$
for $j\in\{2,3,4,5\}$. Their distances are $3$ by assumption,
so the same argument shows that $g$ extends to a $1$-Lipschitz
function on $V(G)$. Consequently,
$$
\kappa_{\mathrm{LLY}}(v,v_0)\leq \Delta g(v)-\Delta g(v_0)
=-\frac{3}{7}+\frac{1}{3}=-\frac{2}{21}<0,
$$
again contradicting positive curvature. This proves the claim.\hfill $\blacksquare$

\medskip

As before, we have chosen $w_0,w_6$ to state the claim but it can be applied to any pair of consecutive vertices $w_i,w_{i+1}$ by symmetry.

\medskip

 The vertices $w_i$
and the existing $t_i$ form a cycle $C$, in which $t_i$ lies
between $w_{i-1}$ and $w_i$ whenever it exists.
As in Case~2, all chords lie on the side not containing $v$.
The same argument excludes chords joining vertices at cyclic
distance $2$ or $3$: an empty triangle or quadrilateral would
give an intermediate vertex degree at most $4$.
The distance conditions in Claim~\ref{cl:73} must therefore be realized
by chords of $C$.

\smallskip

We shall also use the fact that, whenever $t_i$ exists,
\begin{equation}
\label{eq:forbidden-near-t-edges}
t_iw_{i+1},\ t_iw_{i-2}\notin E(G),
\end{equation}
which is a direct consequence of Observation~\ref{obs:local-structure}.

\medskip

We now consider two cases.

\smallskip
\noindent
\textbf{Subcase 3.1: $v$ has at most one neighbour of degree $6$.}

\noindent Suppose first that all neighbours of $v$ have degree $5$. This case is illustrated in Figure \ref{fig:all-degree-five-neighbours}.
Then $C$ has length $7$, so every chord joins vertices at cyclic
distance $2$ or $3$. Thus $C$ has no chords, contradicting
Claim~\ref{cl:73}(1).

\smallskip

Suppose now that $v$ has exactly one neighbour of degree $6$,
which we may assume is $v_0$. This case is illustrated in Figure \ref{fig:one-degree-six-neighbour}. Then $C$ has cyclic order
$t_0,w_0,w_1,\allowbreak\ldots,w_6$.
Applying Claim~\ref{cl:73}(2) forces $t_0$ to have a common
neighbour with one of $v_2,v_3,v_4,v_5$.
The only possible chord that does not join vertices at cyclic
distance $2$ or $3$ is $t_0w_3$. Consequently,
$t_0w_3\in E(G)$.
Applying Claim~\ref{cl:73}(1) to $w_0,w_1$ then forces $w_0w_4\in E(G)$
or $w_1w_5\in E(G)$. Both edges cross $t_0w_3$, a contradiction.
Thus this subcase is impossible.

\begin{figure}[!htbp]
    \centering
    \begin{minipage}[t]{0.48\textwidth}
        \centering
        \includegraphics[width=0.65\linewidth]
            {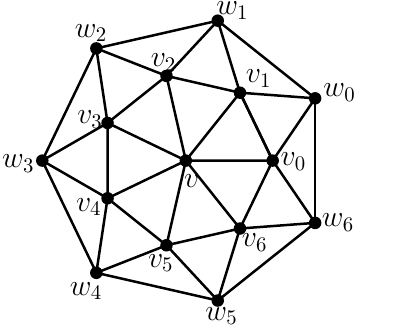}
        \caption{All neighbours of $v$ have degree $5$.}
        \label{fig:all-degree-five-neighbours}
    \end{minipage}\hfill
    \begin{minipage}[t]{0.48\textwidth}
        \centering
        \includegraphics[width=0.65\linewidth]{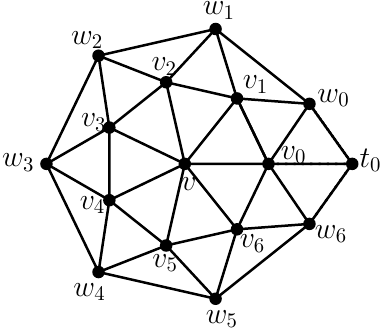}
        \caption{Exactly one neighbour of $v$ has degree $6$.}
        \label{fig:one-degree-six-neighbour}
    \end{minipage}
\end{figure}

\smallskip
\noindent
\textbf{Subcase 3.2: $v$ has at least two neighbours of degree $6$.}

\noindent The cyclic distance between two such neighbours is $1$, $2$ or $3$.
We exclude these three possibilities in turn.

\smallskip
\noindent
\emph{Two degree $6$ neighbours have cyclic distance $2$.}

After cyclic relabelling, suppose that $d(v_0)=d(v_2)=6$. This is illustrated in Figures \ref{fig:degree-six-0-2-case} and \ref{fig:degree-six-0-2-v1-five}.
Applying Claim~\ref{cl:73}(2) to $w_0,w_6$ and $w_1,w_2$, and using
\eqref{eq:forbidden-near-t-edges}, forces edges
$t_0x_0,t_2x_2\in E(G)$ with
$$
\begin{aligned}
x_0&\in\{t_2,w_2,t_3,w_3,t_4,w_4,t_5\},\\
x_2&\in\{t_4,w_4,t_5,w_5,t_6,w_6,t_0\}.
\end{aligned}
$$

\begin{figure}[!htbp]
    \centering
    \begin{minipage}[t]{0.48\textwidth}
        \centering
        \includegraphics[width=0.65\linewidth]{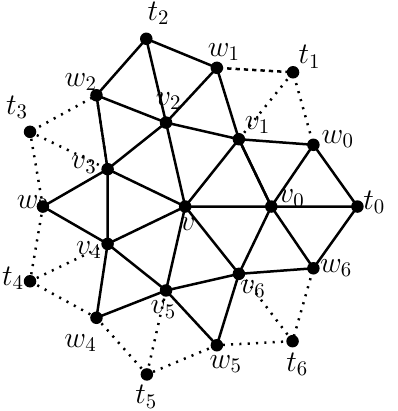}
        \caption{The configuration when $d(v_0)=d(v_2)=6$.}
        \label{fig:degree-six-0-2-case}
    \end{minipage}\hfill
    \begin{minipage}[t]{0.48\textwidth}
        \centering
        \includegraphics[width=0.65\linewidth]{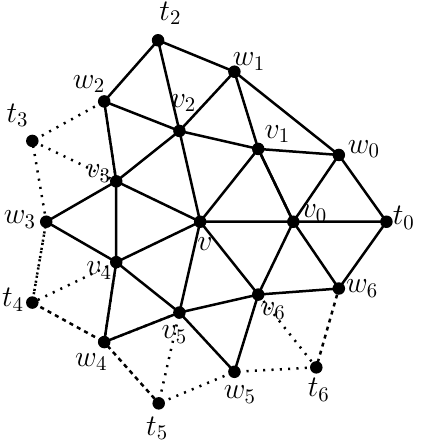}
        \caption{The same configuration when $d(v_1)=5$.}
        \label{fig:degree-six-0-2-v1-five}
    \end{minipage}
\end{figure}

We first exclude $t_0t_2\in E(G)$.
If $d(v_1)=5$, this would be a chord joining vertices at cyclic
distance $3$ in $C$.
If $d(v_1)=6$, applying Claim~\ref{cl:73}(2) to $w_0,w_1$ and using
\eqref{eq:forbidden-near-t-edges} forces an edge $t_1x_1$ with
$x_1\in\{t_3,w_3,t_4,w_4,t_5,w_5,t_6\}$.
Every such edge crosses $t_0t_2$, which is impossible.
Therefore, $x_0\neq t_2$ and $x_2\neq t_0$.

\smallskip

By planarity, the edges $t_0x_0$ and $t_2x_2$ can be drawn
simultaneously only if
$$
x_0,x_2\in\{t_4,w_4,t_5\}.
$$
Indeed, on the arc from $t_2$ to $t_0$ containing $w_4$,
$x_2$ must occur before $x_0$, unless they coincide.

Applying Claim~\ref{cl:73}(1) to $w_5,w_6$ now forces an edge from $w_5$ or
$w_6$ to a vertex in $\{w_1,t_2,w_2,t_3,w_3\}$.
Each such edge crosses $t_0x_0$, since its endpoints lie on
opposite arcs of $C$ determined by $t_0$ and $x_0$.
This contradicts planarity. Hence, two degree $6$ neighbours
of $v$ cannot have cyclic distance $2$.

\medskip
\noindent
\emph{Two degree $6$ neighbours are consecutive.}

\noindent After cyclic relabelling, suppose that $d(v_0)=d(v_1)=6$. This is illustrated in Figure \ref{fig:consecutive-degree-six-neighbours}.
By the previous case, $v_2,v_3,v_5,v_6$ have degree $5$,
while $v_4$ may have degree $5$ or $6$.
Applying Claim~\ref{cl:73}(2) to $w_0,w_0$ and $w_0,w_1$, and excluding the
edges in \eqref{eq:forbidden-near-t-edges} and the short chords,
forces edges $t_0x_0,t_1x_1\in E(G)$ with
$$
x_0\in\{w_2,w_3,t_4\},\qquad
x_1\in\{t_4,w_4,w_5\}.
$$
If $d(v_4)=5$, then $t_4$ is absent, and every choice of these
two edges produces a crossing. Hence $d(v_4)=6$.
The only noncrossing possibility is $x_0=x_1=t_4$, so both
$t_0t_4$ and $t_1t_4$ belong to $G$.

\smallskip

\begin{figure}[!htbp]
    \centering
    \includegraphics[width=0.3\textwidth]{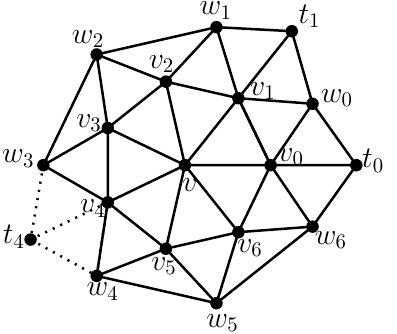}
    \caption{The local configuration when $d(v_0)=d(v_1)=6$.}
    \label{fig:consecutive-degree-six-neighbours}
\end{figure}

Applying Claim~\ref{cl:73}(1) to $w_2,w_3$ forces an edge between
$\{w_2,w_3\}$ and $\{w_5,w_6,t_0,w_0\}$.
Each such edge crosses $t_1t_4$, a contradiction.
Therefore, $v$ cannot have two consecutive neighbours of degree $6$.

\medskip
\noindent
\emph{Two degree $6$ neighbours have cyclic distance $3$.}

\noindent After cyclic relabelling, suppose that $d(v_0)=d(v_3)=6$. This is illustrated in Figure \ref{fig:degree-six-0-3}.
By the previous two cases, all the remaining neighbours of $v$
have degree $5$.
Applying Claim~\ref{cl:73}(2) to $w_0,w_6$, and excluding the short chords,
forces $t_0z_0\in E(G)$ for some $z_0\in\{t_3,w_3\}$.

\smallskip

\begin{figure}[!htbp]
    \centering
    \includegraphics[width=0.3\textwidth]{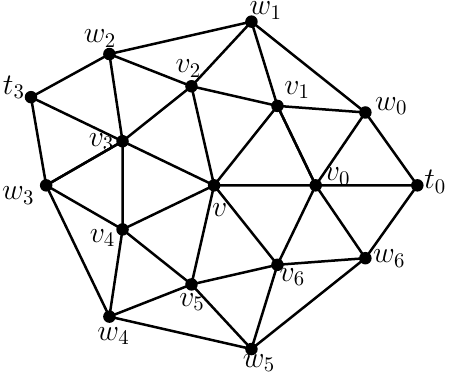}
    \caption{The local configuration when $d(v_0)=d(v_3)=6$.}
    \label{fig:degree-six-0-3}
\end{figure}

Applying Claim~\ref{cl:73}(1) to $w_1,w_2$ then forces an edge between
$\{w_1,w_2\}$ and $\{w_4,w_5,w_6\}$.
Each such edge crosses $t_0z_0$, again contradicting planarity.

\medskip

We have therefore excluded every possible arrangement of the
degree $6$ neighbours of $v$. Indeed, $v$ cannot have at most
one such neighbour, and any two would have cyclic distance
$1$, $2$ or $3$, all of which have been excluded.
This contradiction shows that $G$ has no vertex of degree $7$.
Therefore, $\Delta(G)\leq 6$, which concludes the proof. \end{proof}

\subsection{Property $\mathcal{P}_2$: the possible local configurations}
By Proposition \ref{prop:degree} we know that in the graphs under consideration, the only possible vertex degrees are $5$ and $6$. We shall fix some notation for the rest of the section: 

Let $v$ be a vertex of degree $a\in \{5,6\}$. Write the neighbours of $v$ in cyclic order as $v_0,\ldots,v_{a-1}$ (where indices are considered modulo $a$). Each $v_i$ has degree either $5$ or $6$. For each $i$, let
$w_i$ is the third vertex of the face incident with $v_iv_{i+1}$
on the side not containing $v$, and, whenever $d(v_i)=6$,
\[
N(v_i)=\{v,v_{i-1},w_{i-1},t_i,w_i,v_{i+1}\}
\]
in cyclic order. If $d(v_i)=5$, the vertex $t_i$ is omitted.

By Observation~\ref{obs:local-structure}, the vertices $w_i$
and the existing $t_i$ lie outside $N[v]$ and are pairwise
distinct. 
They form a cycle $C$, in which $t_i$ lies between
$w_{i-1}$ and $w_i$ whenever it exists. 
We use this convention in all the sets below.

\medskip

We now determine the possible
arrangements of these degrees around a vertex. The following
restrictions will be used in the next subsection to bound
$|V(G)|$.
We first consider a vertex of degree $6$.

\begin{proposition}
\label{prop:local-configurations6}
Let $G$ be a 5-connected plane triangulation with positive
LLY curvature. Let $v$ be a vertex of degree $6$. 
Then, either all of the neighbors of $v$ have degree $5$ or $v$ has exactly two neighbors of degree $6$ and they are antipodal in the cyclic order.
\end{proposition}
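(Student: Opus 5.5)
The plan is to copy the strategy of Case 3 ($a=7$) in the proof of Proposition~\ref{prop:degree}. We use curvature to force distance-$2$ relations between far-apart vertices of the local configuration around $v$. Because the inner disc is filled by faces, these relations must be realized by chords of the outer cycle $C$, or by common neighbours outside $C$, which for the relevant pairs again amount to chords. Planarity then rules out crossing chords.

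Since $d(v)=6$, Lemma~\ref{lem:curvature-estimate} gives nothing for edges $vv_i$ with $d(v_i)=6$, because $4/6+3/6-1>0$. So the whole argument runs through tailored $1$-Lipschitz functions. First, the analogues of Claim~\ref{cl:73} for $a=6$. Take the test function $f$ of Claim~\ref{cl:72} adapted to $a=6$: $f(v)=f(v_2)=f(v_4)=0$, $f(v_0)=f(v_1)=f(v_5)=1$, $f(v_3)=-1$, $f(w_5)=f(w_0)=2$, and $f(t_0)=1$ if it exists. This yields $\Delta f(v)-\Delta f(v_0)=\frac{2}{6}-\frac{2}{b}$, which is only $\le 0$ when $b=d(v_0)\le 6$. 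It is $0$ for $b=6$ and negative for $b=5$, so it forces $d_G(w_5,v_3)=2$ or $d_G(w_0,v_3)=2$. The $g$-type function of Claim~\ref{cl:73}(2) gives $-\frac{3}{6}+\frac{2}{6}<0$ when $d(v_0)=6$, with $g(v_2)=g(v_3)=g(v_4)=-1$. This forces $d_G(t_0,v_j)=2$ for some $j\in\{2,3,4\}$. I would double-check these two computations and, where the margin is zero, adjust the values on $w$'s and $t$'s to obtain a strict inequality.

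Next, translate these statements into chord conditions on $C$. Use 5-connectivity as before: a chord joining vertices at cyclic distance $2$ or $3$ on $C$ would create a separating triangle or quadrilateral, or a vertex of degree $\le4$. Also use the forbidden edges \eqref{eq:forbidden-near-t-edges}. For a degree-$6$ neighbour $v_i$, the condition becomes a chord from $t_i$ to a vertex roughly antipodal on $C$, namely $w_{i+2},t_{i+3},w_{i+3}$. The $w$-condition becomes a long chord from $\{w_{i-1},w_i\}$ to the vertices near $t_{i+3}$, which is possible only if $t_{i+3}$ exists or $C$ is long enough.

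Then do a case analysis on the set $T$ of degree-$6$ neighbours. If $T=\emptyset$, we are done. If $|T|=1$, say $T=\{v_0\}$, then $C$ has length $7$ and the $t_0$-chord must go to a vertex at cyclic distance $\ge 4$. This is impossible in a $7$-cycle, since every vertex is within distance $3$; contradiction. If $|T|\ge2$, treat the pairs at cyclic distance $1$, $2$, $3$ as in Subcase 3.2. For distance $1$ or $2$, the two forced $t$-chords, together with a $w$-chord from the opposite side, must cross. For distance $3$ ($v_0,v_3$), $t_0t_3$ is admissible and consistent. If a third degree-$6$ neighbour existed, it would be at distance $\le2$ from $v_0$ or $v_3$, which is already excluded. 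Hence exactly two antipodal degree-$6$ neighbours remain.

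The main obstacle is the $w$-condition when $d(v_0)=6$, where the curvature bound is only $0$. I expect to need to modify $f$, for instance by setting $f(t_0)=2$ or by shifting values on $w_{i}$'s, to make the inequality strict. Failing that, I would rely only on the $t$-chord conditions, which have slack $-\frac16$, and on 5-connectivity in the crossing arguments.
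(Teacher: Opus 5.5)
There is a genuine gap: your overall plan matches the paper's, but the curvature inputs are miscomputed, and the one you lean on most is false.

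\textbf{The $w$-condition.} For the function adapted from Claim~\ref{cl:72}, the neighbours of $v$ carry values $1,1,0,-1,0,1$, so $\Delta f(v)=\frac13$. The differences around $v_0$ are $-1,0,1,0,1,0$, which sum to $1$, so $\Delta f(v_0)=\frac1b$. Hence $\Delta f(v)-\Delta f(v_0)=\frac13-\frac1b>0$ for $b\in\{5,6\}$, and this function gives no constraint at all.

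No adjustment of values will rescue the ``$w$-condition'', because it is false. In $G_{14}$, with $v$ of degree $6$, one has $d_G(w_5,v_3)=d_G(w_0,v_3)=3$. You could have caught this internally. If the condition held when $d(v_0)=5$, it would have to be realized by a chord of the hexagon $C$ at cyclic distance at most $3$. Then no degree-$6$ vertex could have all neighbours of degree $5$, which contradicts the first alternative of the statement you are proving.

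\textbf{The $t$-condition.} For your $g$-function, only three neighbours of $v$ have value $-1$. So $\Delta g(v)=-\frac13=\Delta g(v_0)$, and the margin is exactly $0$, not $-\frac16$. This already suffices, since positive curvature is strict, so no strict inequality is needed. It is in fact the only curvature input the paper uses.

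\textbf{The crossing arguments.} The arguments at cyclic distances $2$ and $1$ must therefore use $t$-chords alone, and as you state them they do not close.

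\emph{Distance $2$.} Suppose $d(v_0)=d(v_2)=6$. The forced chords are $t_0x_0$ with $x_0\in\{t_2,w_2,t_3,w_3,t_4\}$, and $t_2x_2$ with $x_2\in\{t_4,w_4,t_5,w_5,t_0\}$. The single chord $t_0t_2$ satisfies both with no crossing. So you must first exclude $t_it_{i+2}$, and hence $t_it_{i+4}$:
\begin{itemize}
\item if $d(v_{i+1})=5$, the chord $t_it_{i+2}$ is at cyclic distance $3$ along $t_iw_iw_{i+1}t_{i+2}$, which 5-connectivity forbids;
\item if $d(v_{i+1})=6$, the forced chord from $t_{i+1}$ ends in $\{t_{i+3},w_{i+3},t_{i+4},w_{i+4},t_{i+5}\}$, strictly on the other arc of $C$ determined by $t_i,t_{i+2}$, so it crosses $t_it_{i+2}$.
\end{itemize}
Only then do $x_0\in\{w_2,t_3,w_3\}$ and $x_2\in\{w_4,t_5,w_5\}$ interlace with $t_0,t_2$.

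\emph{Distance $1$.} Suppose $d(v_0)=d(v_1)=6$, so $v_2,\dots,v_5$ have degree $5$ by the previous case. Then $x_0\in\{w_2,w_3\}$ and $x_1\in\{w_3,w_4\}$, and both chords could end at $w_3$ without crossing. You must rule out $t_0w_3$ and $t_1w_3$ as cyclic-distance-$3$ chords, via the paths $t_0w_5w_4w_3$ and $t_1w_1w_2w_3$. This leaves $t_0w_2$ and $t_1w_4$, which cross.

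Your closing fallback (``rely only on the $t$-chord conditions and 5-connectivity'') points the right way. However, these steps are exactly what make it work, and the proposal does not supply them.
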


\begin{proof}
Let $v\in V(G)$ be a vertex of degree $6$. We use the notation introduced above. See Figure \ref{fig:degree-six} for an illustration.
Let $C$ be the cycle formed by the vertices $w_i$ and $t_j$. The faces incident with
vertices of $N[v]$ triangulate the disk $D$ bounded by $C$ and
containing $v$. Every vertex of $C$ is incident with at most four
edges of this disk triangulation (and thus at least one edge outside of the disk), and every chord of $C$ lies
outside $D$.

\begin{figure}[!htbp]
    \centering
    \includegraphics[width=0.3\textwidth]{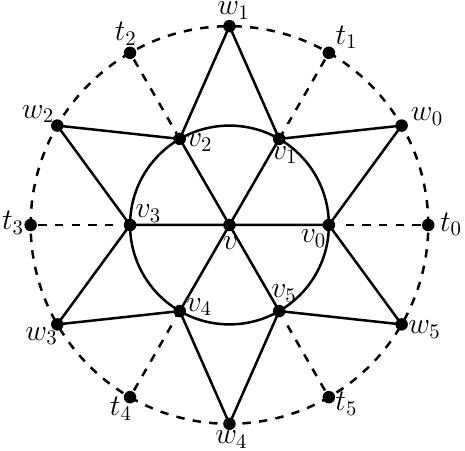}
    \caption{The local configuration around $v$ when
    $d(v)=6$. For each $i$, the vertex $t_i$
    is present precisely when $d(v_i)=6$; otherwise the
    corresponding boundary segment is the edge $w_{i-1}w_i$.}
    \label{fig:degree-six}
\end{figure}

By $5$-connectivity, no chord joins vertices at cyclic
distance $2$ or $3$ in $C$. 

\begin{claim}
\label{cl:degree-six-distance}
If $d(v_i)=6$, then $d_G(t_i,v_j)=2$ for some
$j\in\{i+2,i+3,i+4\}$.
\end{claim}

\noindent \textit{Proof of the claim.}
Suppose otherwise. Define $f$ on $N[v]\cup N[v_i]$ by
\[
\begin{aligned}
f(v)=f(v_{i-1})=f(v_{i+1})&=0,
& f(t_i)&=2,\\
f(v_i)=f(w_{i-1})=f(w_i)&=1,
& f(v_{i+2})=f(v_{i+3})=f(v_{i+4})&=-1.
\end{aligned}
\]
The induced local configuration around $vv_i$ shows that pairs
whose values differ by $2$ are nonadjacent. The only pairs whose
values differ by $3$ are $(t_i,v_j)$ for
$j\in\{i+2,i+3,i+4\}$, and their distances are at least $3$
by assumption. Thus $f$ extends to a $1$-Lipschitz function on
$V(G)$. Since $f(v_i)-f(v)=1$ and
\[
\Delta f(v)=-\frac13,
\qquad
\Delta f(v_i)=\frac46-1=-\frac13,
\]
we obtain $\kappa_{\mathrm{LLY}}(v,v_i)\leq\Delta f(v)-\Delta f(v_i)=0$,
contrary to positive curvature.
\hfill $\blacksquare$

\medskip

We also have $t_iw_{i+1},t_iw_{i-2}\notin E(G)$ whenever $t_i$
exists. Indeed, Observation~\ref{obs:local-structure} implies  $N(t_i)\cap N(v_{i+1})=\{v_i,w_i\}$;
the other side is symmetric. Claim~\ref{cl:degree-six-distance}
and the neighbourhoods of $v_{i+2},v_{i+3},v_{i+4}$ therefore
force a chord
\begin{equation}
\label{eq:degree-six-forced-chords}
t_ix_i\in E(G),
\qquad
x_i\in\{t_{i+2},w_{i+2},t_{i+3},w_{i+3},t_{i+4}\}.
\end{equation}

We first exclude $t_it_{i+2}\in E(G)$ whenever both vertices
exist. If $d(v_{i+1})=5$, this chord cuts off the three-edge path
$t_iw_iw_{i+1}t_{i+2}$, which is impossible. Otherwise,
$t_{i+1}$ lies on the $C$-arc from $t_i$ to $t_{i+2}$
containing $w_i$, whereas all its possible endpoints in
\eqref{eq:degree-six-forced-chords} lie on the complementary
open arc from $t_{i+3}$ to $t_{i+5}$, so the required chord would therefore cross $t_it_{i+2}$.
This argument applies to every $i$ and also excludes $t_it_{i+4}$.
Consequently, \eqref{eq:degree-six-forced-chords} reduces to
\begin{equation}
\label{eq:degree-six-reduced-chords}
t_ix_i\in E(G),
\qquad
x_i\in\{w_{i+2},t_{i+3},w_{i+3}\}.
\end{equation}

\medskip
\noindent\emph{At most one neighbour of degree $6$.}

\noindent If all neighbours of $v$ have degree $5$, the first alternative
holds. If exactly one has degree $6$, then $C$ has length $7$,
so every chord joins vertices at cyclic distance $2$ or $3$.
Thus $C$ has no chords, contradicting
\eqref{eq:degree-six-reduced-chords}.

\medskip
\noindent\emph{At least two neighbours of degree $6$.}

\noindent The cyclic distance between two such neighbours is $1$, $2$
or $3$. We exclude the first two possibilities.

\smallskip
\noindent\emph{Two degree $6$ neighbours have cyclic distance $2$.}
After cyclic relabelling, suppose that $d(v_0)=d(v_2)=6$. This is illustrated in Figure \ref{fig:degree-six-0-2}.
By \eqref{eq:degree-six-reduced-chords}, there are chords
$t_0x_0,t_2x_2\in E(G)$ with
\[
x_0\in\{w_2,t_3,w_3\},
\qquad
x_2\in\{w_4,t_5,w_5\}.
\]
Their endpoints occur on $C$ in the cyclic order
$t_0,t_2,x_0,x_2$. Since both chords lie outside $D$, they cross,
contradicting planarity.

\begin{figure}[!htbp]
    \centering
    \includegraphics[width=0.3\textwidth]{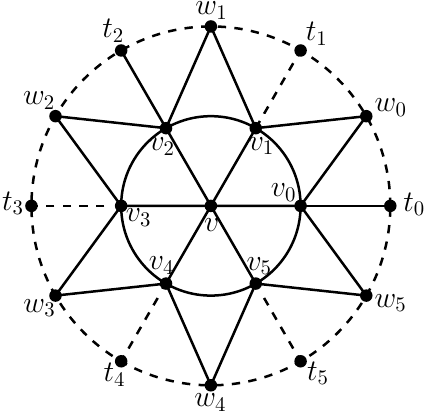}
    \caption{The local configuration around $v$ when
    $d(v_0)=d(v_2)=6$. For $i\notin\{0,2\}$, the vertex $t_i$
    is present precisely when $d(v_i)=6$; otherwise the
    corresponding boundary segment is the edge $w_{i-1}w_i$.}
    \label{fig:degree-six-0-2}
\end{figure}

\smallskip
\noindent\emph{Two degree $6$ neighbours are consecutive.}
After cyclic relabelling, suppose that $d(v_0)=d(v_1)=6$.
By the previous case, $v_2,v_3,v_4,v_5$ all have degree $5$.
Thus \eqref{eq:degree-six-reduced-chords} forces chords
$t_0x_0,t_1x_1\in E(G)$ with
\[
x_0\in\{w_2,w_3\},
\qquad
x_1\in\{w_3,w_4\}.
\]
The edges $t_0w_3$ and $t_1w_3$ would cut off the three-edge
paths $t_0w_5w_4w_3$ and $t_1w_1w_2w_3$, respectively, giving us separating cycles of length $4$.
Hence $x_0=w_2$ and $x_1=w_4$. But $t_0w_2$ and $t_1w_4$
have interlaced endpoints on $C$, again contradicting planarity.

\medskip

We have excluded exactly one degree $6$ neighbour, as well as
any pair at cyclic distance $1$ or $2$. Therefore either all
six neighbours of $v$ have degree $5$, or exactly two have
degree $6$ and they are antipodal.
\end{proof}

We now study the arrangement of degrees around a vertex of degree $5$. But before that we deduce a consequence of the preceeding proposition:
\begin{lemma}
\label{cl:degree-six-continuation}
Let $x$ be a vertex of degree $6$ with two degree $6$ neighbours
$y,z$, and let $y',z'$ be their respective degree $6$ neighbours
other than $x$. Then $y'z'\in E(G)$.
\end{lemma}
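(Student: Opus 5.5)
The plan is to place the whole configuration in the notation of Proposition~\ref{prop:local-configurations6} around $v:=x$ and show that the edge $y'z'$ is the only chord compatible with the constraints derived in that proof.

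\emph{Identifying the vertices.} Since $d(x)=6$ and $x$ has at least the two degree $6$ neighbours $y,z$, Proposition~\ref{prop:local-configurations6} says $x$ has exactly these two degree $6$ neighbours and they are antipodal. After cyclic relabelling of the neighbours $v_0,\dots,v_5$ of $v=x$, we may take $y=v_0$ and $z=v_3$, with $v_1,v_2,v_4,v_5$ of degree $5$. The neighbours of $v_0$ in cyclic order are $v,v_1,w_0,t_0,w_5,v_5$. Now apply Proposition~\ref{prop:local-configurations6} to $v_0$ itself. It has degree $6$ and the degree $6$ neighbour $x=v$, so it has exactly one further degree $6$ neighbour, namely the antipode of $v$ in this cyclic order, which is $t_0$. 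Hence $y'=t_0$, and symmetrically $z'=t_3$. The cycle $C$ around $v$ is then
\[
t_0\,w_0\,w_1\,w_2\,t_3\,w_3\,w_4\,w_5,
\]
of length $8$.

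\emph{Locating the forced chord.} Next I will invoke the chord constraint \eqref{eq:degree-six-reduced-chords} obtained in the proof of Proposition~\ref{prop:local-configurations6}. That constraint comes from Claim~\ref{cl:degree-six-distance}, together with the facts $t_iw_{i+1},t_iw_{i-2}\notin E(G)$ and the exclusion of $t_it_{i+2}$ and $t_it_{i+4}$. These are valid for any degree $6$ vertex with a degree $6$ neighbour; here $t_2$ and $t_4$ do not even exist. For $i=0$ the constraint gives an edge $t_0x_0$ with
\[
x_0\in\{w_2,t_3,w_3\}.
\]

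\emph{Eliminating the other candidates.} It remains to rule out $x_0=w_2$ and $x_0=w_3$. Along $C$, the vertex $w_2$ is at cyclic distance $3$ from $t_0$ via the path $t_0w_0w_1w_2$. Likewise $w_3$ is at cyclic distance $3$ via the path $t_0w_5w_4w_3$. In each case the chord, which lies outside the disk $D$, would close a $4$-cycle separating the two interior vertices of the path (for instance $w_0,w_1$, each of which has a neighbour outside $D$) from $v$. This contradicts $5$-connectivity, exactly as in the "no chord at cyclic distance $2$ or $3$" remark used before. Therefore $x_0=t_3$, that is, $y'z'=t_0t_3\in E(G)$.

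The main obstacle I expect is making sure the reduced chord list \eqref{eq:degree-six-reduced-chords} is legitimately available here, since it was derived inside a proof rather than stated as a lemma. I will re-check that its derivation used only $d(v)=6$, $d(v_i)=6$, positive curvature and the local structure, and not any case hypothesis. The identification of $y'$ as $t_0$ via antipodality around $v_0$ is the other point to state carefully.
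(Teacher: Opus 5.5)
Your proposal is correct and follows the paper's proof essentially step for step: antipodality at $v_0$ and $v_3$ gives $y'=t_0$ and $z'=t_3$; the reduced chord list from the proof of Proposition~\ref{prop:local-configurations6}, applied with $i=0$, leaves $x_0\in\{w_2,t_3,w_3\}$; and the short-chord restriction rules out $w_2$ and $w_3$. One wording fix in that last step: the $4$-cycle $t_0w_0w_1w_2$ does not separate $w_0,w_1$ from $v$, since they lie on it. Rather, the region it cuts off outside $D$ either contains a vertex, which that $4$-cycle then separates from $v$, or it is empty, in which case a single diagonal triangulates it and one of $w_0,w_1$ has degree $4$.
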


\begin{proof}
By the previous proposition, $y$ and $z$ are antipodal around $x$.
We assume $x=v$,
$y=v_0$ and $z=v_3$, as illustrated in Figure \ref{fig:bad}. The remaining neighbours
$v_1,v_2,v_4,v_5$ of $x$ all have degree $5$.

Since $x$ and $v_0$ both have degree $6$, the first assertion
applied at $v_0$ shows that its other degree $6$ neighbour $y'$
is antipodal to $x$. The cyclic order around $v_0$ is
$x,v_5,w_5,t_0,w_0,v_1$, so $y'=t_0$.
Similarly, the cyclic order around $v_3$ is
$x,v_2,w_2,t_3,w_3,v_4$, and hence $z'=t_3$.
It remains to prove that $t_0t_3\in E(G)$.

\begin{figure}[!htbp]
    \centering
    \includegraphics[width=0.3\textwidth]{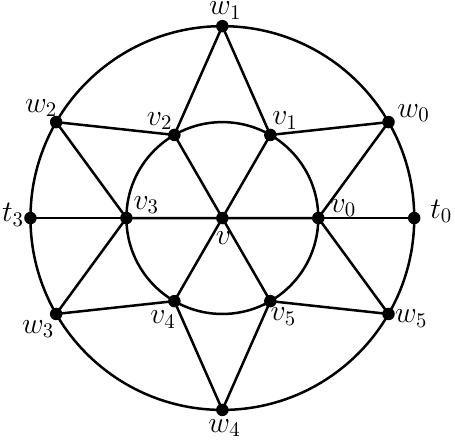}
    \caption{The local configuration around a vertex of degree $6$
    with two antipodal neighbours of degree $6$.}
    \label{fig:bad}
\end{figure}

Since only $v_0$ and $v_3$ have degree $6$ among the neighbours
of $x$, the outer cycle of the local configuration is
\[
C=t_0w_0w_1w_2t_3w_3w_4w_5t_0.
\]
By \eqref{eq:degree-six-reduced-chords}, $t_0$ has a neighbour
in $\{w_2,t_3,w_3\}$. Recall that the short-chord restriction
established in the preceding proof forbids chords joining
vertices at cyclic distance $2$ or $3$ in $C$.

Now $t_0$ and $w_2$ are joined along $C$ by the three-edge path
$t_0w_0w_1w_2$, while $t_0$ and $w_3$ are joined by the
three-edge path $t_0w_5w_4w_3$. Consequently, neither
$t_0w_2$ nor $t_0w_3$ can be an edge.
The only remaining possibility is $t_0t_3\in E(G)$,
which gives $y'z'\in E(G)$.
\end{proof}
\begin{proposition}\label{prop:local-configurations5}
    Let $v$ be a vertex of degree $5$. Then, it has at most 2 neighbors of degree $6$.
\end{proposition}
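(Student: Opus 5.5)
We argue by contradiction, following the template of Proposition~\ref{prop:local-configurations6}. Let $v$ have degree $5$ with neighbours $v_0,\ldots,v_4$, and suppose at least three of them have degree $6$. With indices modulo $5$, any three of the five positions contain two consecutive positions. So after cyclic relabelling we may assume $d(v_0)=d(v_1)=6$. We use the notation $w_i,t_i$ and the outer cycle $C$ fixed at the start of this subsection. As before, $5$-connectivity forbids chords of $C$ joining vertices at cyclic distance $2$ or $3$ along $C$, and all chords lie outside the disk containing $v$.

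\textbf{Step 1: a degree-$5$ analogue of Claim~\ref{cl:degree-six-distance}.} The first step is to prove that if $d(v_i)=6$, then $d_G(t_i,v_j)=2$ for some $j\in\{i+2,i+3\}$. To do this, define $f$ on $N[v]\cup N[v_i]$ by
\[ f(v)=f(v_{i\pm1})=0,\qquad f(v_i)=f(w_{i-1})=f(w_i)=1,\qquad f(t_i)=2,\qquad f(v_{i+2})=f(v_{i+3})=-1. \]
Lemma~\ref{lem:local-edge-configuration} handles all pairs except $(t_i,v_j)$, and the assumption handles those. We then compute
\[ \Delta f(v)=\tfrac{1-2}{5}=-\tfrac15,\qquad \Delta f(v_i)=\tfrac{0+0+0+0+1+1-6}{6}=-\tfrac13 . \]
This gives $\kappa\le -\tfrac15+\tfrac13=\tfrac{2}{15}>0$, which is not enough. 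The test function therefore has to be strengthened. One option is to raise $f(w_{i-1}),f(w_i)$ to $2$, which is allowed provided the distances from $w_{i-1},w_i$ to $v_{i+2},v_{i+3}$ are not $2$. Another is to use $f(v)=0$ with the far neighbours set to $-1$ and the vertices near $v_i$ raised.

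I expect this to be the main obstacle: finding the right function, together with the resulting disjunction of distance-$2$ conditions, so that the bound becomes $\le 0$. Concretely, I would aim for a claim of the following form: either $t_i$ has a neighbour in $N(v_{i+2})\cup N(v_{i+3})$, or one of $w_{i-1},w_i$ is at distance $2$ from $v_{i+2}$ or $v_{i+3}$. If the computation instead shows the curvature upper bound is positive for every admissible $f$, I would fall back on a Claim~\ref{cl:73}-style variant that uses both $t_i$ and the $w$'s.

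\textbf{Step 2: translate distance conditions into chords.} Each distance-$2$ condition must be realised by a chord of $C$. The non-chord possibilities are excluded by Observation~\ref{obs:local-structure}, as in \eqref{eq:forbidden-near-t-edges}, namely $t_iw_{i+1},t_iw_{i-2}\notin E(G)$. Since $C$ has length $5+(\#\text{degree-}6\text{ neighbours})\le 10$, the admissible chords form a short explicit list, and the no-short-chord rule prunes it further.

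\textbf{Step 3: case analysis.} We have $d(v_0)=d(v_1)=6$ and a third degree-$6$ neighbour at $v_2$ or $v_3$ (up to reflection). The goal in each case is to show that the forced chords from $t_0$ and $t_1$, and from the third $t_j$, pairwise interlace on $C$, or cut off a path of length $\le3$ and so create a separating $3$- or $4$-cycle. Either outcome contradicts planarity or $5$-connectivity, exactly as in the consecutive case of Proposition~\ref{prop:local-configurations6}.

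Proposition~\ref{prop:local-configurations6} applied at $v_0$ and $v_1$ supplies extra structure. For example, $v_0$ and $v_1$ are adjacent degree-$6$ neighbours of each other, so each has exactly one further degree-$6$ neighbour, antipodal in its own rotation; this pins down which of $w_i,t_i$ have degree $6$. Lemma~\ref{cl:degree-six-continuation} then forces specific chords such as $t_0t_1$-type edges. In particular, the antipode of $v_1$ around $v_0$ is $t_0$, so $d(t_0)=6$ and the remaining neighbours of $v_0$ (including $v$, which has degree $5$) are consistent. Applied at $v_1$ with its other degree-$6$ neighbour, Lemma~\ref{cl:degree-six-continuation} yields $t_0t_1\in E(G)$. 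That chord joins vertices at distance $3$ along $C$ via $t_0w_0t_1$ only when the arc is short, which is itself a contradiction. If this works, Step 1 may even be unnecessary, and I would try this route first.
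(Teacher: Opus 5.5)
Your proposal has a genuine gap: it never reaches a contradiction. Step~1 fails by your own computation ($\kappa_{\mathrm{LLY}}(v,v_i)\le\tfrac{2}{15}$). Your first suggested repair is not admissible: $w_{i-1}\sim v_{i-1}$ and $w_i\sim v_{i+1}$ with $f(v_{i\pm1})=0$, so setting $f(w_{i-1})=f(w_i)=2$ is not $1$-Lipschitz. Steps~2--3 are only a plan.

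The one concrete shortcut you give rests on a misreading of the rotations. Around $v_0$ the cyclic order is $v,v_4,w_4,t_0,w_0,v_1$. So the antipode of $v_1$ is $w_4$, not $t_0$; $t_0$ is antipodal to $v$, which has degree $5$. Likewise, the antipode of $v_0$ around $v_1$ is $w_1$. Hence Proposition~\ref{prop:local-configurations6} gives $d(w_4)=d(w_1)=6$ and says nothing about $t_0,t_1$, and Lemma~\ref{cl:degree-six-continuation} does not yield $t_0t_1$. In fact $t_0t_1$ is never an edge: it would make $t_0w_0t_1$ a facial triangle and force $d(w_0)=4$.

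This route cannot work in any form, because it uses only $d(v_0)=d(v_1)=6$. It would therefore also exclude the degree-$5$ vertices $b_i$ of $G_{17}$, which have exactly two consecutive degree-$6$ neighbours $a_i,a_{i+1}$. For the same reason, no single-edge test function of the Step~1 type can succeed. Indeed, your Step~1 claim already fails in $G_{17}$. The contradiction has to come from the \emph{third} degree-$6$ neighbour.

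The missing argument is purely combinatorial and uses exactly the tools you cite in Step~3.
\begin{itemize}
\item Proposition~\ref{prop:local-configurations6} at $v_1$ rules out $d(v_2)=6$, because $v_0,v_2$ lie at cyclic distance $2$ around $v_1$; symmetrically, at $v_0$ it rules out $d(v_4)=6$. So the only pattern is $d(v_0)=d(v_1)=d(v_3)=6$ and $d(v_2)=d(v_4)=5$.
\item Around $v_3$ the rotation is $v,v_2,w_2,t_3,w_3,v_4$. Every antipodal pair contains one of the degree-$5$ vertices $v,v_2,v_4$, so $v_3$ is good; in particular $d(t_3)=5$.
\item Let $s$ be the second degree-$6$ neighbour of $w_1$. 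Lemma~\ref{cl:degree-six-continuation} with $(x,y,z)=(v_1,v_0,w_1)$ gives $w_4s\in E(G)$.
\item So $s$ is the antipode of $v_1$ in the rotation $t_1,v_1,v_2,w_2,\ldots$ at $w_1$, and of $v_0$ in the rotation $t_0,v_0,v_4,w_3,\ldots$ at $w_4$. These rotations use $d(v_2)=d(v_4)=5$. Hence $sw_2,sw_3\in E(G)$.
\item But $w_2,w_3$ are at cyclic distance $2$ around $v_3$, so $N(w_2)\cap N(w_3)=\{v_3,t_3\}$. Now $s\neq t_3$ since $d(t_3)=5$. Also $s\neq v_3$, since otherwise the degree-$6$ vertex $w_1$ would be a neighbour of the good vertex $v_3$.
\end{itemize}
This contradiction is the paper's proof.
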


\begin{proof}
Suppose, for a contradiction, that a vertex $v$ of degree $5$
has at least three neighbours of degree $6$. We use the same
notation $v_i,w_i,t_i$, now with indices taken modulo $5$;
see Figure~\ref{fig:degree-five}.

\begin{figure}[htb]
    \centering
    \includegraphics[width=0.3\textwidth]{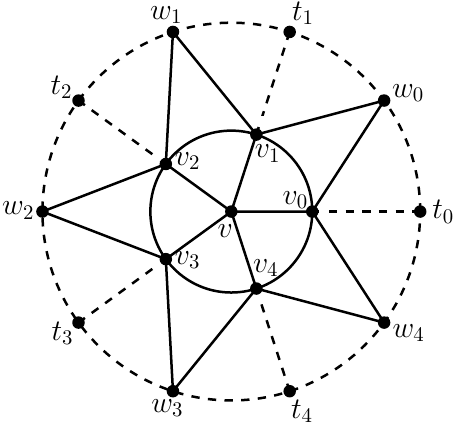}
    \caption{The local configuration around $v$ when
    $d(v)=5$. For each $i$, the vertex $t_i$
    is present precisely when $d(v_i)=6$; otherwise the
    corresponding boundary segment is the edge $w_{i-1}w_i$.}
    \label{fig:degree-five}
\end{figure}

Two degree $6$ neighbours must be consecutive; after cyclic
relabelling, assume that $d(v_0)=d(v_1)=6$.
The result of \ref{prop:local-configurations6} forces $d(v_2)=d(v_4)=5$, since otherwise
$v_1$ or $v_0$ would have two degree $6$ neighbours at cyclic
distance $2$. Thus the only remaining pattern is
\[
d(v_0)=d(v_1)=d(v_3)=6,
\qquad
d(v_2)=d(v_4)=5.
\]

This is illustrated in Figure \ref{fig:5-6-6-6-base}. By antipodality at $v_0$ and $v_1$, we have $d(w_4)=d(w_1)=6$.
On the other hand, the cyclic order around $v_3$ is
$v,v_2,w_2,t_3,w_3,v_4$. Each antipodal pair contains one of
$v,v_2,v_4$, all of degree $5$. Hence the first assertion implies
that every neighbour of $v_3$ has degree $5$.

\begin{figure}[!htbp]
    \centering
    \includegraphics[width=0.3\textwidth]{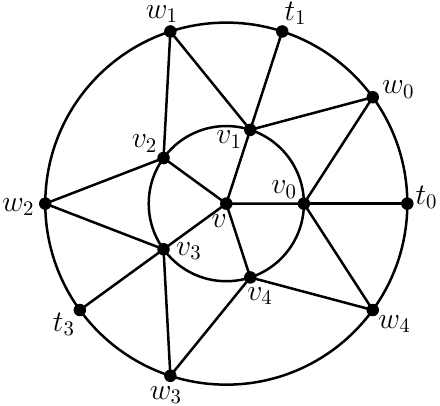}
    \caption{The local configuration around $v$ in the only
    remaining cyclic pattern $6,6,5,6,5$. }
    \label{fig:5-6-6-6-base}
\end{figure}

Let $s$ be the degree $6$ neighbour of $w_1$ other than $v_1$.
Applying Lemma~\ref{cl:degree-six-continuation} with
$(x,y,z)=(v_1,v_0,w_1)$ gives $w_4s\in E(G)$, since the other
degree $6$ neighbour of $v_0$ is $w_4$.
Since $d(v_2)=d(v_4)=5$, the triples $v_1,v_2,w_2$ and
$v_0,v_4,w_3$ are consecutive in the cyclic orders around $w_1$
and $w_4$, respectively. Moreover, $s$ is antipodal to $v_1$
around $w_1$ and to $v_0$ around $w_4$. It follows that
$sw_2,sw_3\in E(G)$.

However, $w_2$ and $w_3$ have cyclic distance $2$ around $v_3$,
so Observation~\ref{obs:local-structure} yields
\[
N(w_2)\cap N(w_3)=\{v_3,t_3\}.
\]
Thus $s\in\{v_3,t_3\}$. This is impossible: $d(t_3)=5$ whereas
$d(s)=6$, and $s\neq v_3$ because $s$ is adjacent to the
degree $6$ vertex $w_1$, while every neighbour of $v_3$ has
degree $5$. This contradiction proves the second assertion.
\end{proof}
We introduce the following terminology:
We call a vertex of degree $6$ \emph{good} if all its neighbours
have degree $5$, and \emph{bad} otherwise.

\subsection{Property $\mathcal{P}_3$: the bound on the number of vertices}

Recall that Euler's formula gives
$|V(G)|-|E(G)|+|F(G)|=2$ for a connected plane graph $G$.
If $G$ is a plane triangulation, then $3|F(G)|=2|E(G)|$, and hence
\begin{equation}
\label{eq:euler-charge}
\sum_{u\in V(G)}\bigl(6-d(u)\bigr)=12.
\end{equation}
As every vertex of our graphs has degree either $5$ or $6$, vertices of degree $6$ do not contribute to the sum, and this forces the graph to have exactly 12 vertices of degree $5$.
We shall now use a discharging argument in order to bound $\vert V(G)\vert$.
\begin{proposition}
\label{prop:vertex-bound}
Let $G$ be a 5-connected plane triangulation with positive
LLY curvature. Then $|V(G)|\leq 17$.
\end{proposition}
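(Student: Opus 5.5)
We plan a double-counting argument between degree-$5$ and degree-$6$ vertices. By Proposition~\ref{prop:degree} every vertex has degree $5$ or $6$, and by \eqref{eq:euler-charge} there are exactly $12$ vertices of degree $5$. So it suffices to show that $G$ has at most $5$ vertices of degree $6$. We count the edges between the set $V_5$ of degree-$5$ vertices and the set $V_6$ of degree-$6$ vertices in two ways.

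\textbf{Upper bound.} By Proposition~\ref{prop:local-configurations5}, every degree-$5$ vertex has at most $2$ neighbours of degree $6$. Hence
\[
e(V_5,V_6)\le 24 .
\]

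\textbf{Lower bound.} By Proposition~\ref{prop:local-configurations6}, a good degree-$6$ vertex has $6$ neighbours of degree $5$, and a bad one has exactly $4$. Writing $g$ and $b$ for the numbers of good and bad vertices, this gives
\[
6g+4b\le 24 .
\]
This alone only yields $g+b\le 6$, so we need one extra unit of slack.

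\textbf{Structure of bad vertices.} We exploit Lemma~\ref{cl:degree-six-continuation}. Bad vertices induce a $2$-regular subgraph, so they form disjoint cycles. Along such a cycle $x$ has antipodal degree-$6$ neighbours $y,z$, and the lemma gives $y'z'\in E(G)$. Thus $y',z',y,x,z$ are consecutive on the cycle and $y'z'$ closes it, so every bad cycle has length exactly $5$. Hence either $b=0$ or $b\ge 5$.

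\textbf{Case analysis.}
\begin{itemize}
\item If $b=0$, then $6g\le 24$ gives $g\le 4$, and $|V(G)|\le 16$.
\item If $b\ge 5$, then $4b\le 24$ forces $b\le 6$, hence $b=5$ (one $5$-cycle). The remaining budget is $6g\le 4$, so $g=0$ and $|V(G)|=17$.
\end{itemize}
In all cases $|V(G)|\le 17$.

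\textbf{Main obstacle.} The delicate step is the claim that bad cycles are $5$-cycles. Lemma~\ref{cl:degree-six-continuation} gives $y'z'\in E$, but we must check that $y'\ne z'$ and that $y'z'$ is an edge between bad vertices, closing the cycle rather than creating a chord or a triangle. If $y'=z'$, we get a triangle $x,y,y'$ plus $z$ adjacent to $y'$. Then $y'$ would have three degree-$6$ neighbours, contradicting Proposition~\ref{prop:local-configurations6}.

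The $5$-cycle conclusion is actually not needed for the bound. Even the weaker fact $6g+4b\le 24$ together with $b\ne 6$ suffices, since $b\le 6$ and $b=6$ with $g=0$ would be the only way to reach $18$. If the cycle-length argument proves fragile, I would instead exclude $b=6$ directly: a bad $6$-cycle, or two bad triangles, would contradict the lemma.
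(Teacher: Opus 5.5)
Your overall strategy is sound and essentially matches the paper's. You bound $n_6\le 6$ by counting, then rule out the one extremal case (six bad and no good degree-$6$ vertices) with Lemma~\ref{cl:degree-six-continuation}. Your double count $6g+4b\le 24$ is a cleaner substitute for the paper's discharging. If $|V(G)|\ge 18$, then $24\ge 6g+4b=4n_6+2g\ge 24+2g$, which gives $g=0$ and $b=6$ at once. The paper instead needs the extra $1/18$ rule, plus a check that every degree-$5$ neighbour of a bad vertex sees two consecutive degree-$6$ vertices. The paper in fact uses your count later, in Lemma~\ref{lem:bad-extremal} and in Case~5 of the classification.

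Your handling of short bad cycles, however, is wrong in two places.

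First, your argument for $y'\neq z'$ does not work. If $y'=z'$, you get a $4$-cycle $xyy'z$, not a triangle $xyy'$ (that would require $x\sim y'$). Moreover, $y'$ then has only the two degree-$6$ neighbours $y,z$, so Proposition~\ref{prop:local-configurations6} is not violated. The right reason is that the lemma's conclusion $y'z'\in E(G)$ is impossible when $y'=z'$ in a simple graph. Alternatively, $y$ and $z$ are antipodal around $x$, so by part~(5) of Observation~\ref{obs:local-structure} their only common neighbour is $x$.

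Second, a bad triangle $xyz$ is not excluded by the lemma. There $y'=z$ and $z'=y$, so $y'z'=zy\in E(G)$ holds trivially, and your fallback claim that ``two bad triangles would contradict the lemma'' is false. You need antipodality instead: by part~(2) of Observation~\ref{obs:local-structure}, the adjacent neighbours $y,z$ of $x$ are consecutive around $x$, contradicting Proposition~\ref{prop:local-configurations6}. This is exactly how the paper rules out triangles.

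With these two repairs, lengths $3$ and $4$ are excluded. Length at least $6$ is excluded because $y'z'$ would give $y'$ a third degree-$6$ neighbour. Your conclusion $b\in\{0,5\}$, and hence $|V(G)|\le 17$, then goes through.
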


\begin{proof} Let $G$ be such a 5-connected triangulation.
Recall that every vertex has degree $5$ or $6$.
We assign to each vertex $u$ the initial charge
$\omega(u):=6-d(u)$. Thus vertices of degree $5$ have charge $1$,
and vertices of degree $6$ have charge $0$.
By \eqref{eq:euler-charge}, the total charge is $12$.
In particular, $G$ has exactly twelve vertices of degree $5$.

\smallskip

We redistribute the charge according to the following rule.
Each vertex of degree $5$ sends charge $1/9$ to each of its
neighbours of degree $6$. If it has two consecutive neighbours
of degree $6$, it sends an additional charge $1/18$ to each of
them. These two vertices are necessarily bad, since they are
adjacent.

\smallskip

Let $\omega'(u)$ denote the final charge of $u$. Since the
discharging procedure only redistributes charge,
\[
\sum_{u\in V(G)}\omega'(u)=12.
\]
Note that for every vertex we have that $\omega'(u)\geq \frac{2}{3}$.

\smallskip

Indeed, if $d(v)=5$, then by Proposition \ref{prop:local-configurations5}, it has at most 2 neighbors of degree $6$. Thus,\[
\omega'(u)=
\begin{cases}
1,
& \text{if $u$ has no neighbour of degree $6$},\\[1mm]
8/9,
& \text{if $u$ has exactly one neighbour of degree $6$},\\[1mm]
7/9,
& \text{if $u$ has two non-consecutive neighbours of degree $6$},\\[1mm]
2/3,
& \text{if $u$ has two consecutive neighbours of degree $6$}.
\end{cases}
\]
which is greater than or equal to $\frac{2}{3}$.\\

Now suppose that $d(u)=6$. If $u$ is good, each of its six
degree $5$ neighbours sends charge $1/9$ to $u$, so
$\omega'(u)=6/9=2/3$.
If $u$ is bad, Proposition~\ref{prop:local-configurations6}
gives exactly two antipodal neighbours of degree $6$.
Each of its four degree $5$ neighbours is incident with a
triangular face containing $u$ and one of these two vertices.
It therefore has two consecutive neighbours of degree $6$
and sends charge $1/9+1/18=1/6$ to $u$. Consequently,
$\omega'(u)=4/6=2/3$.

Thus,
\[
12=\sum_{u\in V(G)}\omega'(u)\geq \frac23\cdot |V(G)|,
\]
and hence $|V(G)|\leq 18$.

\medskip

It remains to exclude equality. Suppose, for a contradiction,
that $|V(G)|=18$. This can only happen if every vertex has final charge exactly
$2/3$. Thus, every vertex of degree $5$
has two consecutive neighbours of degree $6$, both of which
are bad. There can therefore be no good vertex of degree $6$:
any of its degree $5$ neighbours would have a good degree $6$
neighbour, a contradiction.

\smallskip

Since $G$ has exactly twelve vertices of degree $5$, its
remaining six vertices have degree $6$ and are all bad.
Let $D$ be the subgraph induced by these six vertices.
By Proposition~\ref{prop:local-configurations6}, every
vertex of $D$ has degree $2$, so $D$ is a disjoint union of cycles.

Moreover, $D$ contains no triangle. Indeed, every triangle of
$G$ is facial by 5-connectivity, so a triangle of degree $6$
vertices would give each of its vertices two consecutive
degree $6$ neighbours, contrary to their antipodality.
Since $D$ has six vertices, it follows that $D\cong C_6$.
Write this cycle as $a_0a_1a_2a_3a_4a_5a_0$.
Applying Claim~\ref{cl:degree-six-continuation} with
$(x,y,z)=(a_0,a_1,a_5)$ gives $a_2a_4\in E(G)$, since the
other degree $6$ neighbours of $a_1$ and $a_5$ are $a_2$ and
$a_4$, respectively. This contradicts the fact that $D$ is
an induced $6$-cycle.

\medskip

Thus $|V(G)|=18$ is impossible, and $|V(G)|\leq 17$.
\end{proof}
\subsection{Property $\mathcal{P}_4$: the classification}
%\textcolor{blue}{I was wondering whether it would be a good idea to actually calculate the curvature in each of the $G_i$'s explicitely? I'm not really satisfied with how the proof ends. I also think that it's a bit redundant to draw the same graphs twice, once in the beginning of the section and once in the proof of the final property.}

As observed above, if $G$ is a 5-connected plane triangulation with positive LLY curvature, then all of its vertices have degree either $5$ or $6$ and by Euler's formula, we know that it must have exactly 12 vertices of degree $5$. For such a graph, let $n_6$ be the number of degree $6$ vertices. Then, by the above proposition, as $\vert V(G)\vert= 12+n_6 \leq 17 $, we deduce that $n_6 \leq 5$. In order to fully classify such triangulations, we first record the consequences of Euler's formula that will be used
throughout this subsection.

\medskip

We start with an immediate consequence of Euler's formula (obtained by adding a vertex adjacent to all vertices on the boundary of $R$).

\begin{observation}\label{obs:euler}
Suppose that $R$ is a triangulated disk whose boundary is a
cycle of length $a$, and suppose that $R$ contains $r$ interior
vertices. Then the triangulation of $R$ has $2a+3r-3$ edges in total,
of which $a+3r-3$
do not lie on the boundary.
\end{observation}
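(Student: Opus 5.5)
The plan is to reduce the disk to a triangulated sphere and count with Euler's formula. Let $R$ be a triangulated disk with boundary cycle $B$ of length $a$ and $r$ interior vertices. Add a new vertex $z$ in the outer face and join it to all $a$ boundary vertices. Because $B$ is a cycle bounding the single non-triangular face, each boundary edge $b_ib_{i+1}$ together with $z$ bounds a new triangular face. The resulting plane graph $R'$ is therefore a triangulation of the sphere with $n'=a+r+1$ vertices.

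\textbf{Step 1 (count edges of $R'$).} Every face of $R'$ is a triangle, so $3|F|=2|E|$. Combined with Euler's formula $n'-|E|+|F|=2$, this gives $|E(R')|=3n'-6=3(a+r+1)-6=3a+3r-3$. I need the graph to be simple, which holds because $B$ is a cycle and $z$ is new. For $a=3$ and $r=0$ one still gets $K_4$ with $6$ edges, so this small case is consistent.

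\textbf{Step 2 (remove the added edges).} Exactly $a$ edges of $R'$ are incident with $z$. Hence
\[
|E(R)|=3a+3r-3-a=2a+3r-3.
\]

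\textbf{Step 3 (non-boundary edges).} The boundary cycle contributes exactly $a$ edges. Subtracting them, the number of edges not on the boundary is $a+3r-3$.

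The only point that needs care is Step 1: one must check that $R'$ is genuinely a simple triangulation of the sphere. Every interior face of $R$ is a triangle by hypothesis, and the outer face has been subdivided into $a$ triangles through $z$. This is routine, so I expect no real obstacle.
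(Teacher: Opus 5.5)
Your proposal is correct and follows the same route as the paper, whose proof is the one-line hint of adding a vertex adjacent to every boundary vertex of $R$ and applying Euler's formula. Coning off the boundary gives a sphere triangulation with $3(a+r+1)-6$ edges; removing the $a$ edges at the new vertex, and then the $a$ boundary edges, yields exactly the two stated counts.
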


%\begin{proof}
%The first identity is an immediate consequence of a Euler's formula for connected plane graphs.
%Now let $R$ be a triangulated disk with boundary length $a$ and
%$r$ interior vertices. If $m$ and $f$ denote respectively its
%numbers of edges and triangular faces, then Euler's formula for the
%disk gives $a+r-m+f=1$. Moreover, every interior edge belongs to two
%triangular faces and every boundary edge to one, so $3f=2m-a$.
%Eliminating $f$ gives $m=2a+3r-3$. Removing the $a$ boundary edges
%leaves $a+3r-3$ edges not lying on the boundary.
%\end{proof}

We next isolate the two structural facts needed in the classification.

\begin{lemma}
\label{lem:bad-extremal}
Let $G$ be a $5$-connected plane triangulation with positive LLY curvature. $G$ has a bad vertex if and only if $n_6=5$.
In this case, all vertices of degree $6$ are bad and induce a
$5$-cycle.
\end{lemma}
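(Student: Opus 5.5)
We prove the two directions separately, using the structure of the subgraph $D$ induced by the bad vertices together with Lemma~\ref{cl:degree-six-continuation}.

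\emph{Bad vertices exist only if $n_6=5$.} Suppose $G$ has a bad vertex. By Proposition~\ref{prop:local-configurations6}, every bad vertex has exactly two degree-$6$ neighbours. Any degree-$6$ neighbour of a bad vertex is itself bad. Hence the bad vertices induce a subgraph $D$ in which every vertex has degree exactly $2$, so $D$ is a disjoint union of cycles.

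As in the proof of Proposition~\ref{prop:vertex-bound}, $D$ has no triangle. Indeed, every triangle of $G$ is facial, so a triangle in $D$ would give two consecutive degree-$6$ neighbours around some vertex, contradicting antipodality. Since $n_6\le 5$, the cycle of $D$ containing our bad vertex has length $4$ or $5$.

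\emph{Excluding length $4$.} Let the cycle be $a_0a_1a_2a_3$. Applying Lemma~\ref{cl:degree-six-continuation} with $(x,y,z)=(a_0,a_1,a_3)$, the other degree-$6$ neighbours of $a_1$ and $a_3$ are both $a_2$. The lemma would then demand a loop $a_2a_2$, which is absurd. So instead I would argue directly with the configuration of Figure~\ref{fig:bad}, with $x=a_0$, $y=a_1=v_0$, $z=a_3=v_3$. Then $y'=t_0$ and $z'=t_3$ must be distinct vertices, since they lie at different positions on the cycle $C$. But a $4$-cycle forces $y'=z'=a_2$, a contradiction.

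\emph{Length $5$ and $n_6=5$.} Hence the cycle is a $5$-cycle. This uses five degree-$6$ vertices, so $n_6=5$, all degree-$6$ vertices are bad, and they induce $C_5$. This direction is consistent with Lemma~\ref{cl:degree-six-continuation}: for $a_0\cdots a_4$, the lemma gives $a_2a_3\in E(G)$.

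\emph{If $n_6=5$, a bad vertex exists.} Suppose all five degree-$6$ vertices are good, and rerun the discharging from Proposition~\ref{prop:vertex-bound}. No degree-$5$ vertex has two consecutive degree-$6$ neighbours, since those would be adjacent and hence bad. Each good degree-$6$ vertex receives $6\cdot\frac19=\frac23$. Each degree-$5$ vertex therefore keeps $1-\frac k9$, where $k\le 2$ is its number of degree-$6$ neighbours.

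Count the edges between degree-$5$ and degree-$6$ vertices: there are $5\cdot 6=30$ of them. Each of the twelve degree-$5$ vertices has at most $2$ degree-$6$ neighbours, giving at most $24$ such edges. Since $30>24$, this is a contradiction. Thus $n_6=5$ forces a bad vertex, and by the first part all degree-$6$ vertices are then bad and induce a $5$-cycle.

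The main obstacle is the $4$-cycle exclusion. If the configuration argument turns out to be delicate, the fallback is the following count. If $n_6\le 5$ with a $4$-cycle of bad vertices, each bad vertex has four degree-$5$ neighbours, which uses up much of the $24$-edge budget. I would combine this with the discharging argument to force every degree-$5$ vertex into a specific pattern. Finally, I would use Lemma~\ref{cl:degree-six-continuation} together with $5$-connectivity (no separating $4$-cycle $a_0a_1a_2a_3$) to conclude: the $4$-cycle $a_0a_1a_2a_3$ is not facial, and each of its sides contains degree-$5$ vertices, so it is separating, which contradicts $5$-connectivity.
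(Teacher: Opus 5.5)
Your proof is correct and follows essentially the same route as the paper. The degree-$6$ vertices with a degree-$6$ neighbour induce a disjoint union of cycles, and triangles are excluded by antipodality. The $4$-cycle is excluded via the configuration behind Lemma~\ref{cl:degree-six-continuation}, and the converse is the same $30>24$ edge count.

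Your explicit handling of the $4$-cycle case, where that lemma would formally produce a loop, makes precise what the paper's one-line appeal to the lemma leaves implicit. There you instead use $t_0\neq t_3$, i.e.\ antipodal neighbours of a degree-$6$ vertex share no other common neighbour. Likewise, capping the cycle length via $n_6\le 5$ spells out a step the paper does not state.
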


\begin{proof}
Let $H$ be the subgraph of $G$ induced by the vertices of degree $6$.
By Proposition~\ref{prop:local-configurations6}, every vertex of
degree $6$ is either good, in which case all its neighbours have
degree $5$, or bad, in which case it has exactly two degree $6$
neighbours, and these are antipodal in its cyclic neighbourhood.
Thus every vertex of $H$ has degree $0$ or $2$, and every nontrivial
component of $H$ is a cycle.

Suppose now that $G$ has a bad vertex. By Lemma \ref{cl:degree-six-continuation}, its component in $H$ is a $5$-cycle. Moreover, every other vertex of degree $6$ will have exactly $2$ neighbors of degree $6$, making all of the degree $6$ vertices bad.

Conversely, suppose that $n_6=5$ and that all degree $6$ vertices
are good. Each of them then has six neighbours of degree $5$, so
there are $6n_6=30$ edges between vertices of degrees $5$ and $6$.
On the other hand, by Observation \ref{obs:euler} there are exactly
twelve vertices of degree $5$, and
Proposition~\ref{prop:local-configurations5} shows that each has
at most two neighbours of degree $6$. Hence
$6n_6\leq2\cdot12=24$, a contradiction. Therefore $G$ has a bad
vertex, and the first part of the proof applies.
\end{proof}

\begin{lemma}
\label{lem:disk-completion}
Let $R$ be a disk bounded by a cycle $C$ of length
$a\in\{5,6\}$, with at least one vertex of $G$ outside $C$. Suppose that every vertex of $C$ is incident with at most
four edges drawn on $C$ or outside. Then $C$ has no chord
in $R$, and $R$ contains at least one interior vertex.

If $R$ contains $r$ interior vertices, then its triangulation has
$a+3r-3$ edges not lying on $C$. In particular, if $r=1$, the
unique interior vertex is adjacent to every vertex of $C$, and the
triangulation of $R$ is completely determined.
\end{lemma}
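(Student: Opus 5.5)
The plan is to prove the three assertions of Lemma~\ref{lem:disk-completion} in order, using only the 5-connectivity of $G$ (no separating $3$- or $4$-cycles, every triangle facial, minimum degree at least $5$) and Observation~\ref{obs:euler}.

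\emph{No chord inside $R$.} Suppose $uw$ is a chord of $C$ drawn in $R$. Since $a\in\{5,6\}$, the chord splits $C$ into two paths, and one of them has at most $3$ edges. Together with $uw$, this path forms a cycle $C'$ of length at most $4$. The side of $C'$ that does not contain the rest of $R$ contains the other arc of $C$, and hence the vertices of $G$ outside $C$. I first consider the case where the interior of $C'$ inside $R$ contains a vertex of $G$. Then $C'$ is a separating $3$- or $4$-cycle, which contradicts 5-connectivity. Otherwise, that region is triangulated with no interior vertex. Then some internal vertex $z$ of the short arc, which has at most $2$ internal vertices, has all its edges inside $R$ going to vertices of $C'$. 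There are at most $2$ such edges besides its two edges on $C$, namely at most one chord within this region. Adding the at most $4$ edges counted by the hypothesis (those drawn on $C$ or outside, which include the two cycle edges), $z$ gets degree at most $4+1$. I will need to track this count carefully: the hypothesis counts edges on $C$ or outside, so $z$ has at most $2$ edges outside. Inside the region bounded by $C'$ with no interior vertices, a vertex of a triangulated polygon with at most $5$ sides has at most $1$ incident diagonal, and an ear vertex has none. I will choose $z$ to be an ear, so it has no diagonal. Then $d(z)\le 4$, which contradicts minimum degree $5$ from Observation~\ref{obs:local-structure}(1).

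\emph{At least one interior vertex.} Once chords are excluded, if $R$ had no interior vertex, its face would be bounded by $C$. But $R$ is triangulated and $C$ has length at least $5$, so the triangulation of $R$ must use a chord, a contradiction.

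\emph{Edge count and the case $r=1$.} The count $a+3r-3$ is exactly Observation~\ref{obs:euler} applied to the triangulated disk $R$. For $r=1$, this gives $a$ non-boundary edges. Since there are no chords, every non-boundary edge is incident with the unique interior vertex $c$. Hence $c$ has exactly $a$ neighbours among the $a$ vertices of $C$, so it is adjacent to all of them. The faces are then the $a$ triangles $c\,u_iu_{i+1}$, which determines the triangulation.

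The main obstacle I expect is the degree bookkeeping in the chord-exclusion step, in particular making sure that the ``at most four edges on $C$ or outside'' hypothesis combines correctly with the at most one extra edge inside the small region to force a degree below $5$.
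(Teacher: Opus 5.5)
Your proof is correct and follows the paper's argument: a chord in $R$ cuts off a triangle or quadrilateral, which has no interior vertex by 5-connectivity, so some internal vertex of the short arc has no edge into $R$ and therefore degree at most $4$; the remaining claims follow from Observation~\ref{obs:euler} exactly as in the paper. The one thing to tidy is the degree bookkeeping. The cut-off region has at most four sides, not five; for a pentagon your ``at most one diagonal'' remark would be false. In the quadrilateral case, the single diagonal misses one of the two internal arc vertices, and that vertex is the ear you want.
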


\begin{proof}
Suppose first that $xy$ is a chord of $C$ contained in $R$.
The two $x$--$y$ paths along $C$ have total length $a\leq6$, so the
shorter one has length at most $3$. Since $xy$ is a chord, this path
has length at least $2$. Hence $xy$ together with the shorter path
bounds in $R$ either a triangle or a quadrilateral.

This region contains no interior vertex. Indeed, otherwise its
boundary, consisting of at most four vertices, would separate such
a vertex from the vertex of $G$ lying on the other side of $C$,
contradicting $5$-connectivity.

If the shorter path has length 2, say $xuy$, then the region is the triangular face
$xuy$, and $u$ has no incident edge entering the interior of $R$.
If the shorter path is $xuvy$, the empty quadrilateral is
triangulated by a single diagonal, so at least one of $u$ and $v$
has no incident edge entering the interior of $R$. In either case,
some internal vertex of the chosen $x$--$y$ path is incident only
with edges lying on $C$ or on the other side of $C$. By assumption,
it is incident with at most four such edges, and hence has degree at
most $4$, contradicting $\delta(G)\geq5$. Therefore $C$ has no chord
in $R$.

\smallskip

Let $r$ be the number of vertices in the interior of $R$. By
Observation~\ref{obs:euler}, the triangulation of $R$ has
$a+3r-3$ edges not lying on $C$.

\smallskip

If $r=0$, all these $a-3$ edges have both endpoints on $C$ and are
therefore chords of $C$. Since $a\in\{5,6\}$, we have $a-3>0$,
contradicting the first part of the proof. Thus $r\geq1$.

Finally, suppose that $r=1$, and let $p$ be the unique interior
vertex. There are exactly $a$ edges not lying on $C$. Since $C$ has
no chord, every one of them joins $p$ to a vertex of $C$. As $C$
has exactly $a$ vertices, $p$ is adjacent to every vertex of $C$.
Thus the triangulation of $R$ is completely determined.
\end{proof}

We can now prove the classification.

\begin{proposition}
\label{prop:classification}
Let $G$ be a $5$-connected plane triangulation with positive LLY curvature. Then
\[
    G\cong G_{12},\,G_{14},\,G_{15},\,G_{16},
    \quad\text{or}\quad G_{17}.
\]
%Conversely, each of these five graphs is a $5$-connected plane
%triangulation with strictly positive Lin--Lu--Yau curvature on every
%edge.
\end{proposition}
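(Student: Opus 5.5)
We split according to $n_6\in\{0,1,2,3,4,5\}$; this is allowed since $|V(G)|=12+n_6\le 17$ by Proposition~\ref{prop:vertex-bound}. In each case the plan is to rebuild $G$ from the local configurations of Propositions~\ref{prop:local-configurations6} and~\ref{prop:local-configurations5}, closing the remaining regions with Lemma~\ref{lem:disk-completion}, and to show that the result is forced. The cases $n_6=1$ and the variants of $n_6=2,3,4$ that admit no valid configuration must be ruled out by contradiction.

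\textbf{Small cases.} If $n_6=0$, every vertex has degree $5$. The neighbourhood of any vertex $v$ is a $5$-cycle, and by Observation~\ref{obs:local-structure} the outer cycle $w_0\cdots w_4$ is again a $5$-cycle with no short chords. The rest of $G$ is a disk bounded by this cycle in which every boundary vertex already has four edges on the inner side. Lemma~\ref{lem:disk-completion} then applies, and counting vertices ($r=12-11=1$) gives a unique interior vertex joined to all of $C$. Hence $G\cong G_{12}$.

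\textbf{The cases $n_6\in\{1,\dots,4\}$.} By Lemma~\ref{lem:bad-extremal} every degree-$6$ vertex is good, so no two are adjacent. Count the edges between degree-$5$ and degree-$6$ vertices: there are $6n_6$ of them, and each degree-$5$ vertex meets at most two of them. I would also use a finer count of common degree-$5$ neighbours. Two good degree-$6$ vertices at distance $2$ share one or two common neighbours, and by Proposition~\ref{prop:local-configurations5} no degree-$5$ vertex lies next to three of them. Combined with the lengths of the cycles $C$ around the degree-$6$ vertices, this fixes their mutual positions.

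For $n_6=1$, take the outer cycle $C$ of the degree-$6$ vertex $v$, which has length $6$ with all $w_i$ of degree $5$. I would try to grow a second ring and close it with Lemma~\ref{lem:disk-completion}. The vertex count ($13=1+6+6$ leaves $0$ interior vertices) should contradict $r\ge1$, and this excludes $n_6=1$. For $n_6=2,3,4$, I would choose a degree-$6$ vertex and its ring, and locate the other degree-$6$ vertices among the $w_i$ using Proposition~\ref{prop:local-configurations5}. This determines the second ring, and the last region is closed via Lemma~\ref{lem:disk-completion} with $r=1$ or $r=0$. The outcome should be $G_{14}$ (the two degree-$6$ vertices antipodal, distance $3$), $G_{15}$ and $G_{16}$, with the alternative placements killed either by a vertex count or by a degree-$5$ vertex acquiring a third degree-$6$ neighbour.

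\textbf{The case $n_6=5$.} By Lemma~\ref{lem:bad-extremal} the degree-$6$ vertices induce a $5$-cycle $a_0\cdots a_4$, and antipodality determines each rotation. The $5$-cycle bounds two disks. In each disk the degree-$5$ vertices adjacent to consecutive $a_i$ form a $5$-cycle, which is then capped using Lemma~\ref{lem:disk-completion} with $r=1$. The total count, $5+5+5+1+1=17$, matches exactly, so $G\cong G_{17}$.

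The main obstacle is the middle range $n_6\in\{2,3,4\}$. There the placement of the good degree-$6$ vertices is not immediately rigid, and several candidate configurations have to be eliminated by careful planarity and short-chord arguments of the kind used in Proposition~\ref{prop:degree}. I would first try to settle these cases by distances: show that two good degree-$6$ vertices are never adjacent or at distance $2$ with a single common neighbour, and then enumerate.
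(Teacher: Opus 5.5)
Your cases $n_6\in\{0,1,5\}$ follow the paper's argument. The gap is in the middle range, and the mechanism you sketch for it does not work as stated. Take $v$ a good degree-$6$ vertex and $W=w_0\cdots w_5$ its second ring. The region $R$ beyond $W$ contains $r=|V(G)|-13=n_6-1$ vertices. For $n_6\in\{1,2\}$, Lemma~\ref{lem:disk-completion} finishes: $r=0$ is impossible, and $r=1$ forces the cap. Note that for $n_6=2$ the second degree-$6$ vertex is this interior cap vertex, not one of the $w_i$. For $n_6=3,4$, however, $r=2,3$. There the lemma only gives the number $a+3r-3\in\{9,12\}$ of non-boundary edges of $R$. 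Moreover, Proposition~\ref{prop:local-configurations5} alone cannot decide whether the remaining degree-$6$ vertices lie on $W$ or inside $R$. Your distance-based enumeration does not help either. Two nonadjacent vertices with a common degree-$5$ neighbour always have exactly two common neighbours, so ``distance $2$ with a single common neighbour'' never occurs and carries no information. No curvature test functions or short-chord arguments are needed here.

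\emph{What is missing is a counting argument in each of these cases.}

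For $n_6=3$, let $p,q$ be the interior vertices, $s$ the number of edges from $\{p,q\}$ to $W$, and $\varepsilon=1$ if $pq\in E(G)$, else $0$. Then $s+\varepsilon=9$ while $d(p)+d(q)=s+2\varepsilon\geq 10$. Hence $pq\in E(G)$ and $d(p)=d(q)=5$. The neighbourhoods of $p$ and $q$ on $W$ are then complementary intervals of four vertices meeting at two opposite vertices, which gives $G_{15}$.

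For $n_6=4$, you must turn your inequality $6n_6\leq 2\cdot 12$ into equality: every degree-$5$ vertex has exactly two degree-$6$ neighbours. The paper then argues globally.
\begin{itemize}
\item $H=G-S$ is a connected cubic plane graph with $8$ faces, four of which are the hexagons left by removing the stars.
\item By face-length counting ($36=24+12$), its other four faces are triangles, and these partition $V(H)$.
\item Two triangles joined by two edges would give a chordless, hence separating, $4$-cycle. So contracting the triangles yields $K_4$, and $H$ is the truncated tetrahedron, i.e.\ $G\cong G_{16}$.
\end{itemize}

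A local version also works. Equality forces exactly one of $w_{i-1},w_i$ to have degree $6$ for each $i$, so the degree-$6$ vertices alternate on $W$ and the three interior vertices have degree $5$. This gives $s=9$ edges from the interior to $W$, so the remaining $12-9=3$ edges form an interior triangle, and the three attachment intervals on $W$ are forced.

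Without one of these counting steps, the cases $n_6=3$ and $n_6=4$ remain unproved.
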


\begin{proof}
For the cases $n_6\leq3$, we use the following notation. If $n_6=0$,
choose a vertex $v$ of degree $5$; otherwise choose a vertex $v$ of
degree $6$. Put $a:=d(v)$. When $n_6>0$,
Lemma~\ref{lem:bad-extremal} shows that $v$ is good, so all its
neighbours have degree $5$. This also holds when $n_6=0$.

\smallskip

Write the neighbours of $v$ in cyclic order as
$v_0,\ldots,v_{a-1}$, with indices taken modulo $a$, and let $w_i$
be the common neighbour of $v_i$ and $v_{i+1}$ different from $v$.
By Observation~\ref{obs:local-structure}, the vertices
$w_0,\ldots,w_{a-1}$ lie outside $N[v]$ and are pairwise distinct.
Since $d(v_i)=5$,
$N(v_i)=\{v,v_{i-1},w_{i-1},w_i,v_{i+1}\}$ in cyclic order.
Hence $w_{i-1}w_i\in E(G)$ for every $i$, and
$W=w_0w_1\cdots w_{a-1}w_0$ is a cycle.

\smallskip

The faces incident with the vertices of $N[v]$ triangulate the disk
bounded by $W$ and containing $v$. Each $w_i$ is incident with
exactly four edges on this side of $W$, namely
$w_iv_i,w_iv_{i+1},w_iw_{i-1},w_iw_{i+1}$. All remaining vertices
lie in the disk $R$ on the other side of $W$, and their number is
$r=|V(G)|-(1+2a)$. Lemma~\ref{lem:disk-completion} applies to $R$;
in particular, $W$ has no chord in $R$ and $r\geq1$.

\smallskip
\noindent\emph{Case 1: $n_6=0$.}

\noindent Here $|V(G)|=12$ and $a=5$, so $r=1$. By
Lemma~\ref{lem:disk-completion}, the unique remaining vertex $p$ is
adjacent to all five vertices of $W$. The graph is therefore
completely determined and is the icosahedron $G_{12}$.

\begin{figure}[H]
    \centering
    \includegraphics[width=0.25\textwidth]
    {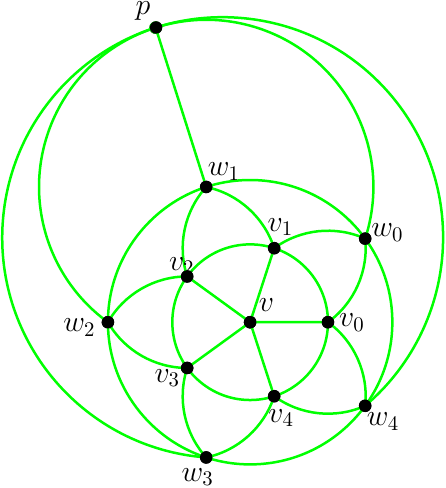}
    \caption{The local configuration used in the construction
    of the icosahedron $G_{12}$.}
    \label{fig:icosahedron-construction}
\end{figure}

\smallskip
\noindent\emph{Case 2: $n_6=1$.}

\noindent Here $|V(G)|=13$ and $a=6$, so $r=0$, contrary to
Lemma~\ref{lem:disk-completion}. Hence no such graph exists.

\smallskip
\noindent\emph{Case 3: $n_6=2$.}

\noindent Here $|V(G)|=14$ and $a=6$, so $r=1$. By
Lemma~\ref{lem:disk-completion}, the unique remaining vertex $p$ is
adjacent to all six vertices of $W$, and hence $d(p)=6$. Each $w_i$
then has precisely the five neighbours
$v_i,v_{i+1},w_{i-1},w_{i+1},p$, and therefore has degree $5$.
The graph is completely determined. This is $G_{14}$.

\begin{figure}[H]
    \centering
    \includegraphics[width=0.25\textwidth]{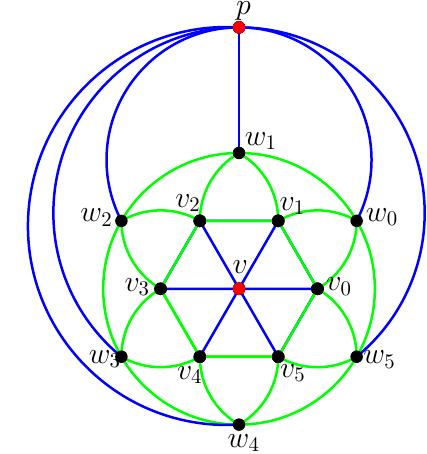}
    \caption{The configuration in the case $n_6=2$, giving
    $G_{14}$.}
    \label{fig:n6-equals-2}
\end{figure}

\smallskip
\noindent\emph{Case 4: $n_6=3$.}

\noindent Here $|V(G)|=15$ and $a=6$, so $r=2$. Let $p,q$ be the two vertices
in the interior of $R$, let $s$ be the number of edges joining
$\{p,q\}$ to $W$, and let $\varepsilon=1$ if $pq\in E(G)$ and
$\varepsilon=0$ otherwise.

\smallskip

Since $W$ has no chord in $R$, every edge of the triangulation of
$R$ not lying on $W$ either joins $\{p,q\}$ to $W$ or is the edge
$pq$. By Observation~\ref{obs:euler}, there are
$6+3\cdot2-3=9$ such edges. Hence $s+\varepsilon=9$. Moreover,
$d(p)+d(q)=s+2\varepsilon\geq10$, as $\delta(G) =5$. It follows that
$\varepsilon=1$, $s=8$, and $d(p)=d(q)=5$.

\smallskip

The edge $pq$ is incident with two triangular faces, say $pqw_i$
and $pqw_j$. The neighbours of $p$ on $W$ form one boundary interval
between $w_i$ and $w_j$, while those of $q$ form the complementary
interval. Since each of $p,q$ has degree $5$, each has exactly four
neighbours on $W$. Thus both intervals contain four vertices, and
$w_i,w_j$ are opposite on the $6$-cycle $W$. After cyclic
relabelling,
\[
    p\sim w_0,w_1,w_2,w_3,\qquad
    q\sim w_3,w_4,w_5,w_0,\qquad pq\in E(G).
\]
Thus $w_0$ and $w_3$ have degree $6$, while the other four vertices
of $W$ have degree $5$. The three degree $6$ vertices are precisely
$v,w_0,w_3$, and the graph is completely determined. This is
$G_{15}$.

\begin{figure}[!htbp]
    \centering
    \includegraphics[width=0.3\textwidth]{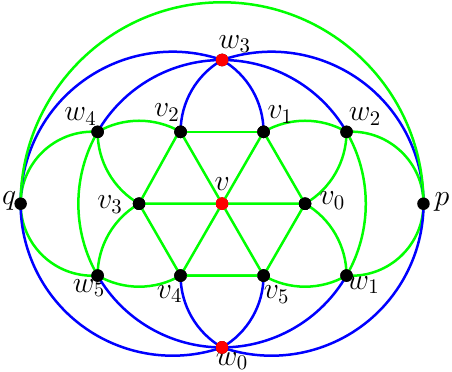}
    \caption{The configuration in the case $n_6=3$, giving $G_{15}$. }
    \label{fig:n6-equals-3}
\end{figure}

\smallskip
\noindent\emph{Case 5: $n_6=4$.}

\noindent Here $|V(G)|=16$. By Lemma~\ref{lem:bad-extremal}, all four
vertices of degree $6$ are good. Let $S$ be their set and put
$T:=V(G)\setminus S$. Thus $|T|=12$, every vertex of $T$ has
degree $5$, and $S$ is independent.

Each vertex of $S$ has six neighbours in $T$, so there are
$4\cdot6=24$ edges between $S$ and $T$. On the other hand,
Proposition~\ref{prop:local-configurations5} says that each
vertex of $T$ has at most two neighbours in $S$. The twelve
vertices of $T$ can therefore be incident with at most
$12\cdot2=24$ edges to $S$. Since this bound is attained,
every vertex of $T$ has exactly two neighbours in $S$.

Let $H:=G[T]$. Since $G$ is $5$-connected and $|S|=4$, the
graph $H=G-S$ is connected. Each vertex of $T$ has degree
$5$ in $G$, of which two neighbours lie in $S$. Hence every
vertex has degree $3$ in $H$. It follows that
$|E(H)|=3\cdot12/2=18$, and Euler's formula gives
$|F(H)|=2-12+18=8$.

\begin{figure}[!htbp]
    \centering
    \includegraphics[width=0.25\textwidth]{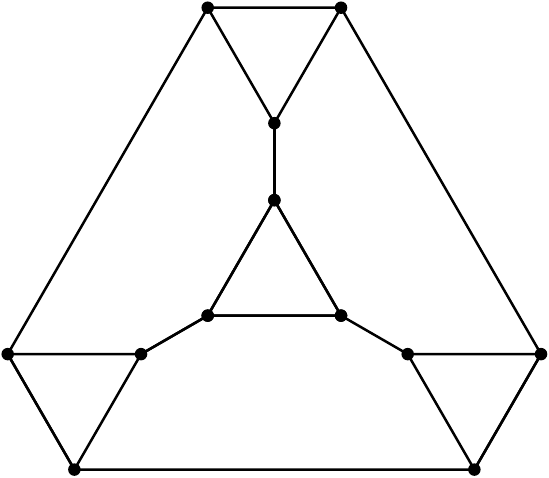}
    \caption{The $3$-regular graph $H=G[T]$ induced by the twelve
    vertices of degree $5$ in the case $n_6=4$.}
    \label{fig:H-n16}
\end{figure}

For each $a\in S$, its six neighbours form a cycle. The six
triangular faces incident with $a$ fill one side of this
cycle and, when $a$ is removed, merge into a hexagonal face.
No other vertex lies in that region. Moreover, since $S$ is
independent, the cycle consists entirely of vertices and
edges of $H$. The four regions therefore remain four
distinct hexagonal faces after all of $S$ is removed.

The sum of the face lengths of $H$ is $2|E(H)|=36$. Its four
hexagonal faces contribute $4\cdot6=24$, leaving total
length $12$ for the other four faces. Each has length at
least $3$, so all four are triangles.

We claim that these four triangular faces are pairwise
vertex-disjoint. Fix $v\in T$. Since $d_G(v)=5$, five
triangular faces of $G$ are incident with $v$. Each of its
two edges to $S$ lies in two of these faces. The resulting
four faces are distinct: a face containing both neighbours
of $v$ in $S$ would make them adjacent, contrary to the
independence of $S$. Thus exactly one triangular face
incident with $v$ avoids $S$, and it remains a triangular
face of $H$. Every vertex of $H$ therefore lies on exactly
one of its four triangular faces. These faces are
vertex-disjoint and partition $V(H)$.

\smallskip

Replace each of the four triangles by a single vertex,
keeping the edges that join different triangles, and call
the resulting graph $Q$. Each vertex of a triangle has
exactly one edge leaving that triangle. Thus $Q$ is
connected, has four vertices, and each of its vertices has
three incident edges.

We show that two triangles cannot be joined by two edges.
If they were, those two edges, together with one edge from
each triangle, would form a $4$-cycle in $H$. The cycle has
no chord: each of its four vertices has already used its
unique edge leaving its triangle. As $H$ is induced, it has
no chord in $G$ either. A chordless $4$-cycle in a plane
triangulation has vertices on both sides, since a side
without vertices would need a diagonal to be triangulated.
It would therefore separate $G$, contradicting
$5$-connectivity.

Consequently, each triangle is joined by one edge to each
of the other three triangles. In other words, $Q\cong K_4$.
Furthermore, the three edges leaving any triangle attach
to its three different vertices. This determines $H$ up
to isomorphism: it is obtained from $K_4$ by replacing each
vertex with a triangle.

Finally, each of the four hexagonal faces of $H$ contains
one vertex of $S$, adjacent to all six vertices on its
boundary. Hence $G$ is uniquely determined and is
isomorphic to $G_{16}$.

\begin{figure}[H]
    \centering
    \includegraphics[width=0.25\textwidth]{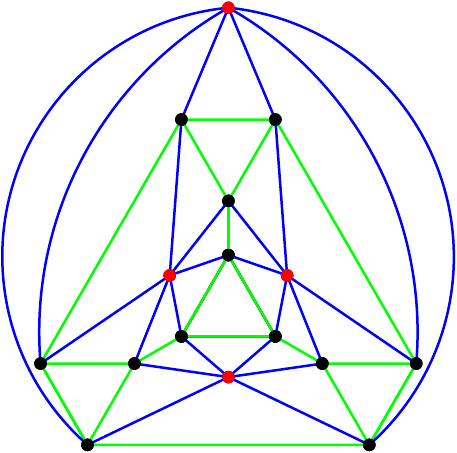}
    \caption{The graph $G_{16}$. The black vertices induce $H$.
    The four red vertices lie in its hexagonal faces and are
    adjacent to all six vertices on the corresponding boundary. }
    \label{fig:n16}
\end{figure}

\smallskip

\noindent\emph{Case 6: $n_6=5$.}

\noindent Here $|V(G)|=17$. By Lemma~\ref{lem:bad-extremal}, the degree $6$
vertices are all bad and induce a $5$-cycle
$A=a_0a_1a_2a_3a_4a_0$, with indices taken modulo $5$.

For each edge $a_ia_{i+1}$, let $b_i$ and $c_i$ be the third
vertices of its two incident faces, chosen on the two sides of $A$.
All these vertices have degree $5$.

They are pairwise distinct. Indeed, if the same vertex occurred for
two distinct edges of $A$, it would be adjacent to at least three
vertices of $A$, hence to at least three degree $6$ vertices,
contrary to Proposition~\ref{prop:local-configurations5}.

By antipodality, the cyclic order around $a_i$ is, up to reversing
the orientation,
\[a_{i-1},b_{i-1},b_i,a_{i+1},c_i,c_{i-1}\]
Hence $b_{i-1}b_i,c_{i-1}c_i\in E(G)$ for every $i$.
Thus the vertices $b_i$ form a $5$-cycle $B$ on one side of $A$,
and the vertices $c_i$ form a $5$-cycle $C$ on the other side.

The three cycles $A,B,C$ account for fifteen vertices. Each $b_i$
is incident, on the side of $B$ containing $A$, with exactly the
four edges
$b_ia_i,b_ia_{i+1},b_ib_{i-1},b_ib_{i+1}$, and similarly for the
$c_i$. Lemma~\ref{lem:disk-completion} therefore applies to the disk
bounded by $B$ on the side not containing $A$, and also to the
corresponding disk bounded by $C$. Each contains at least one
interior vertex.

Exactly two vertices remain, so each disk contains exactly one,
say $p$ inside $B$ and $q$ inside $C$. By
Lemma~\ref{lem:disk-completion},
$N(p)=\{b_0,b_1,b_2,b_3,b_4\}$ and
$N(q)=\{c_0,c_1,c_2,c_3,c_4\}$.
All neighbourhoods are now determined, giving $G_{17}$.

\begin{figure}[htb]
    \centering
    \includegraphics[width=0.25\textwidth]{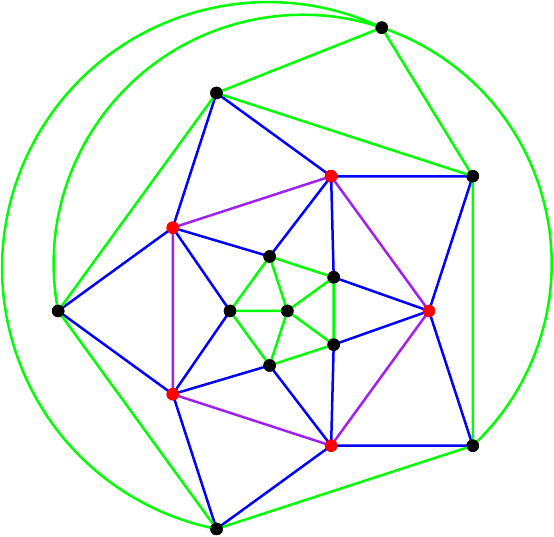}
    \caption{The configuration in the case $n_6=5$.
    The five degree $6$ vertices form $A$.
    The two remaining vertices are adjacent to all vertices
    of $B$ and $C$, respectively.}
    \label{fig:n17-alt}
\end{figure}
The six cases exhaust all possibilities, and in each admissible case
the graph is uniquely determined. Hence
$G\cong G_n$ for some $n\in\{12,14,15,16,17\}$.
\end{proof}

\subsection*{AI disclosure} GPT-4 was used in March 2026
to help produce the python code for computing the LLY curvature in
graphs. Even if this is not directly visible in the manuscript, the computational
experiments we did in python with a large number of planar neighborhoods were crucial
to guide us towards the final proof of Theorem \ref{thm:5ct}. GPT-5.5 was used in
July 2026 to find a version of
Lemma \ref{lem:common-intersection} and trace it back to \cite{KovariSosTuran1954}. 

\subsection*{Ackowledgements} The authors would like to thank Zixia Song, Baoxuan Zhu and Xiaonan Liu for the interesting discussions.

\bibliographystyle{alpha}
\bibliography{ref}

\appendix

\section{Proof of Lemma \ref{lem:common-intersection}}\label{sec:app}

If $r=1$, the conclusion is immediate. Assume $r\ge2$.

Construct a bipartite graph $H$ with parts $X=[N]$ and $Y=[q]$,
viewed as disjoint copies, by joining $a\in X$ to $i\in Y$
whenever $a\in S_i$. Thus $N_H(i)=S_i$, and counting edges gives
\[
\sum_{a\in X}d_H(a)=\sum_{i\in Y}|S_i|\ge qm.
\]

We double-count the pairs $(a,I)$ where $a\in X$ and
$I$ is an $r$-element subset of $N_H(a)$.
First, fixing $a\in X$, there are exactly
$\binom{d_H(a)}{r}$ choices for $I$.
Conversely, fixing an $r$-element subset $I\subseteq Y$,
the possible vertices $a$ are precisely the common neighbours
of the vertices in $I$, namely the elements of
$\bigcap_{i\in I}S_i$. Therefore,
\[
\sum_{a\in X}\binom{d_H(a)}{r}
=
\sum_{\substack{I\subseteq Y\\|I|=r}}
\left|\bigcap_{i\in I}S_i\right|.
\]

%To estimate the left-hand side, define
%$\varphi:[0,\infty)\to[0,\infty)$ by linear interpolation
%between consecutive values $\varphi(k)=\binom{k}{r}$:
%\[
%\varphi(k+\theta)
%=
%(1-\theta)\binom{k}{r}
%+\theta\binom{k+1}{r},
%\qquad
%k\in\mathbb N,\quad \theta\in[0,1],
%\]
%where $\binom{k}{r}=0$ for $k<r$.
%Its slope on $[k,k+1]$ is
%\[
%\binom{k+1}{r}-\binom{k}{r}=\binom{k}{r-1}.
%\]
%These slopes are nonnegative and nondecreasing, so
%$\varphi$ is nondecreasing and convex.
%
%Recall that Jensen's inequality for a convex function states that
%\[
%\frac1N\sum_{a\in X}\varphi(u_a)
%\ge
%\varphi\left(\frac1N\sum_{a\in X}u_a\right).
%\]
By Jensen's inequality,
\[
\sum_{a\in X}\binom{d_H(a)}{r}
\ge N\cdot \binom{\frac1N\sum_{a\in X}d_H(a)}{r}
\ge N\cdot \binom{qm/N}{r}.
\ge N\cdot \left(\frac{qm}{Nr}\right)^r.
\]

%Since $qm/N\ge r\ge2$, we have
%$\lfloor qm/N\rfloor\ge r$ and
%$\lfloor qm/N\rfloor\ge qm/(2N)$.
%Moreover, for integers $k\ge r$,
%\[
%\binom{k}{r}
%=
%\prod_{j=0}^{r-1}\frac{k-j}{r-j}
%\ge\left(\frac{k}{r}\right)^r.
%\]
%Consequently,
%\[
%\sum_{a\in X}\binom{d_H(a)}{r}
%\ge N\left(\frac{qm}{2rN}\right)^r.
%\]

Since $m\le N$, the hypothesis implies $q\ge r$.
By the double-counting identity and
$\binom{q}{r}\le(eq/r)^r$, the average size of an
$r$-fold intersection is at least
\[
\frac{1}{\binom{q}{r}}
\sum_{\substack{I\subseteq Y\\|I|=r}}
\left|\bigcap_{i\in I}S_i\right|
\ge
\frac{N\left(\dfrac{qm}{rN}\right)^r}
     {\left(\dfrac{eq}{r}\right)^r}
=
N\cdot \left(\frac{m}{eN}\right)^r.
\]
Hence some distinct indices $i_1,\ldots,i_r$ satisfy
the required bound.
\qed

\end{document}